\theoremstyle{plain}
\newtheorem{theorem}[table]{Theorem}
\newtheorem{lemma}[table]{Lemma}
\newtheorem{proposition}[table]{Proposition}
\newtheorem{corollary}[table]{Corollary}
\newtheorem{theoremext}{Theorem}
\newtheoremstyle{dfinition}{2mm}{0mm}{}{}{\bfseries}{.}{ }{}
\theoremstyle{dfinition}
\newtheorem{definition}[table]{Definition}
\newtheorem{remark}[table]{Remark}
\newtheorem{notation}[table]{Notation}
\newtheorem{example}[table]{Example}
\makeatletter\patchcmd{\ttlh@hang}{\parindent\z@}{\parindent\z@\leavevmode}{}{}\patchcmd{\ttlh@hang}{\noindent}{}{}{}\makeatother 
\titlespacing*{\section}{0pt}{1mm}{1mm}
\titlespacing*{\subsection}{0pt}{1mm}{1mm}
\titlespacing*{\paragraph}{0pt}{1mm}{1mm}
\newcommand{\myspace}{\setlength{\abovedisplayskip}{2mm}\setlength{\belowdisplayskip}{2mm}}
\newenvironment{Mlist}{\begin{itemize}[topsep=0pt,itemsep=0pt]}{\end{itemize}}
\newenvironment{Menum}{\begin{enumerate}[topsep=0pt,itemsep=0pt]}{\end{enumerate}}
\newcommand{\PRP}[1]{Proposition~\ref{prp:#1}}
\newcommand{\LEM}[1]{Lemma~\ref{lem:#1}}
\newcommand{\THM}[1]{Theorem~\ref{thm:#1}}
\newcommand{\COR}[1]{Corollary~\ref{cor:#1}}
\newcommand{\DEF}[1]{Definition~\ref{def:#1}}
\newcommand{\RMK}[1]{Remark~\ref{rmk:#1}}
\newcommand{\EQN}[1]{(\ref{eqn:#1})}
\newcommand{\SEC}[1]{\textsection\ref{sec:#1}}
\newcommand{\FIG}[1]{Figure~\ref{fig:#1}}
\newcommand{\TAB}[1]{Table~\ref{tab:#1}}
\newcommand{\EXM}[1]{Example~\ref{exm:#1}}
\newcommand{\NTN}[1]{Notation~\ref{ntn:#1}}
\newcommand{\END}{\hfill $\vartriangleleft$}
\newcommand{\Mfig}[3]{\includegraphics[height=#1cm, width=#2cm]{img/#3}}
\newcommand{\CAP}[1]{{\bf #1.}}
\newcommand{\Iff}{if and only if }
\newcommand{\st}{{such that }}
\newcommand{\Wlog}{without loss of generality }
\newcommand{\resp}{respectively}
\newcommand{\wrt}{with respect to }
\newcommand{\bas}[1]{\langle #1\rangle}
\newcommand{\set}[2]{\{ #1 ~|~ #2 \}}
\renewcommand{\c}{\colon}
\newcommand{\dto}{\dashrightarrow}
\newcommand{\hto}{\hookrightarrow}
\renewcommand{\P}{{\mathbb{P}}}
\renewcommand{\S}{{\mathbb{S}}}
\newcommand{\C}{{\mathbb{C}}}
\newcommand{\R}{{\mathbb{R}}}
\newcommand{\Q}{{\mathbb{Q}}}
\newcommand{\Z}{{\mathbb{Z}}}
\newcommand{\T}{{\mathbb{T}}}
\newcommand{\bD}{{\operatorname{\bf D}}}
\newcommand{\bL}{{\operatorname{\bf L}}}
\newcommand{\bM}{{\operatorname{\bf M}}}
\newcommand{\bS}{{\operatorname{\bf S}}}
\newcommand{\bT}{{\operatorname{\bf T}}}
\newcommand{\cA}{{\mathcal{A}}}
\newcommand{\cB}{{\mathcal{B}}}
\newcommand{\cC}{{\mathcal{C}}}
\newcommand{\cS}{{\mathcal{S}}}
\newcommand{\Ys}{{Y_\star}}
\newcommand{\Yc}{{Y_\circ}}
\newcommand{\fL}{{\mathfrak{L}}}
\newcommand{\df}[1]{\textit{#1}}
\newcommand{\aut}{\operatorname{Aut}}
\newcommand{\autc}{\aut_\circ}
\newcommand{\sym}{\operatorname{Sym}}
\newcommand{\lie}{\operatorname{Lie}}
\newcommand{\Msl}{\mathfrak{sl}}
\newcommand{\Msu}{\mathfrak{su}}
\newcommand{\Mso}{\mathfrak{so}}
\newcommand{\Mh}{\mathfrak{h}}
\newcommand{\Mg}{\mathfrak{g}}
\newcommand{\Mi}{{\mathfrak{i}}}
\newcommand{\p}{{\varepsilon}}
\newcommand{\hh}{h}
\newcommand{\kk}{k}
\renewcommand{\l}{{\ell}}
\newcommand{\SO}{\operatorname{SO}}
\newcommand{\PSO}{\operatorname{PSO}}
\newcommand{\PSX}{\operatorname{PSX}}
\newcommand{\PSE}{\operatorname{PSE}}
\newcommand{\PSL}{\operatorname{PSL}}
\newcommand{\PSA}{\operatorname{PSA}}
\newcommand{\X}{\widetilde{X}}
\title{M\"obius automorphisms of surfaces with many circles}
\author{Niels Lubbes}
\date{\today}
\begin{document}
\myspace

\maketitle

\begin{abstract}
We classify real two-dimensional orbits of conformal subgroups such that the orbits contain two circular arcs through a point. Such surfaces must be toric and admit a M\"obius automorphism group of dimension at least two. Our theorem generalizes the classical classification of Dupin cyclides.
\\[2mm]
{\bf Keywords:} surface automorphisms, weak del Pezzo surfaces, M\"obius geometry, circles, Lie algebras, toric geometry, lattice geometry
\\[2mm]
{\bf MSC2010:} 
14J50, 51B10, 51N30, 14C20
\end{abstract}

\begingroup
\def\addvspace#1{\vspace{-2mm}}
\tableofcontents
\endgroup

\section{Introduction}
\label{sec:intro-defs}

Our main result is \THM{M}, which states a classification 
of algebraic surfaces that admit many conformal automorphisms. 
Let us consider some known examples in order to motivate and explain this result.
Suppose that a surface $Z\subseteq\R^n$ is a $G$-orbit for some conformal subgroup~$G$
and that $Z$ is not contained in a hyperplane or hypersphere.
It follows from Liouville's theorem that the conformal transformations of
$\R^n$ for $n\geq 3$ are exactly the M\"obius transformations.
If $n=3$, then $Z\subset \R^3$ is M\"obius equivalent to either 
a circular cone, a circular cylinder or a ring torus, and thus a \df{Dupin cyclide}. 
If $\dim G>2$, then either $Z=\R^2$ or $Z\subset\R^4$ 
is a stereographic projection of a Veronese surface in the unit-sphere $S^4$.
The considered examples of $G$-orbits contain at least two circles through each point
and motivate us to address the following problem 
about surfaces that are in a sense ``generalized Dupin cyclides'':

\textbf{Problem.} 
{\it Classify, up to M\"obius equivalence, real surfaces that 
are the orbit of a M\"obius subgroup
and that
contain at least two circles through a point.}

We see in \FIG{dP6} a linear projection of an orbit of a M\"obius subgroup in~$\R^5$
that contains three circles through each point. 
This surface is characterized by the third row of \THM{M}. 
\begin{figure}[!ht]
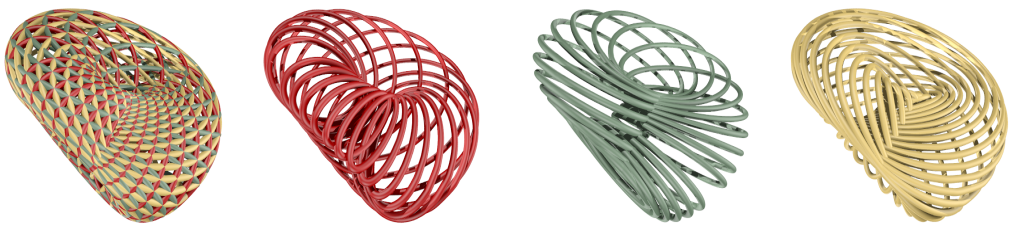

\centering
\begin{tabular}{@{}c@{}c@{}c@{}c@{}}
\Mfig{3}{3.4}{deg6_ABC} &
\Mfig{3}{3.4}{deg6_A}
\Mfig{3}{3.4}{deg6_B}& 
\Mfig{3}{3.4}{deg6_C} 
\end{tabular}
\caption{
A projection of a smooth surface of degree six in $\R^5$
that is an $\SO(2)\times \SO(2)$-orbit and that contains three circles through each point.
The family of M\"obius equivalence classes of such surfaces is two-dimensional.
}
\label{fig:dP6}
\end{figure}

There has been recent interest in the classification of surfaces 
that contain at least two circles through each point \cite{pot2,sko3}.
Surfaces that contain infinitely many circles through a general point
are classified in \cite{kol2} (see \THM{vero}). 
In this article we consider the M\"obius automorphism group as M\"obius invariant
and we use methods from \cite{sch10} (see \THM{D}), a classification from 
\cite{so4} (see \THM{sl2+sl2}) 
and results from \cite{nls-fam-circles} (see \THM{circle}).


For our classification result we require
M\"obius invariants that capture geometric aspects at complex infinity. 
To uncover this hidden structure we define a \df{real variety} $X$ to be a complex variety together with
an antiholomorphic involution $\sigma\c X\to X$, 
which is called its \df{real structure} \citep[Section~I.1]{sil1}.
We denote the real points of $X$ by $X(\R):=\set{p\in X}{\sigma(p)=p}$.
Real varieties can always be defined by polynomials with real coefficients 
\citep[Section~6.1]{serre}.
Curves, surfaces and projective spaces $\P^n$ are real algebraic varieties 
and maps between such varieties are compatible with the real structures 
unless explicitly stated otherwise.
Instead of $\R^n$, it is more natural to use the \df{M\"obius quadric} for our space:
\[
\S^n:=\set{x\in\P^{n+1}}{-x_0^2+x_1^2+\ldots+x_{n+1}^2=0}, 
\]
where $\sigma\c\P^{n+1}\to\P^{n+1}$ sends $x$ to $(\overline{x_0}:\ldots:\overline{x_{n+1}})$.
The \df{M\"obius transformations} of $\S^n$ are the biregular automorphisms $\aut(\S^n)$
and they are linear so that $\aut(\S^n)\subset \aut(\P^{n+1})$.
We denote a \df{stereographic projection} from the \df{unit-sphere} $S^n\subset\R^{n+1}$
by $\pi\c S^n\dto \R^n$.
Notice that $\S^n(\R)\cong S^n$ and that the inverse stereographic projection $\pi^{-1}(\R^n)\subset \S^n$
is an isomorphic copy of $\R^n$ \st the M\"obius transformations of $\S^n$
restrict to M\"obius transformations of $S^n$ and $\pi^{-1}(\R^n)$.
In particular, the M\"obius transformations that preserve the projection center of $\pi$,
correspond to the \df{Euclidean similarities} of~$\R^n$.

A conic $C\subset \S^n$ is called a \df{circle} if $C(\R)$ defines a circle in $S^n\subset\R^{n+1}$.
We call a surface in $\S^n$ \df{$\lambda$-circled}
if it contains exactly $\lambda$ circles through a general point.
A \df{celestial surface} is a $\lambda$-circled surface $X\subseteq\S^n$ 
\st $\lambda\geq 2$
and \st $X$ is not contained in a hyperplane section of~$\S^n$.
If in addition $X$ is of degree~$d$, then its \df{celestial type} is defined as 
$\bT(X):=(\lambda,d,n)$.
If the biregular automorphism group $\aut(X)$ is a Lie group,
then its identity component is denoted by $\autc(X)$
and the \df{M\"obius automorphism group} of $X$ is defined as
$\bM(X):=\autc(X)\cap \autc(\S^n)$.
We denote the \df{singular locus}
of $X$ by $\bS(X)$. 
A complex node, real node, complex tacnode and real tacnode
is denoted by $A_1$, $\underline{A_1}$, $A_3$ and $\underline{A_3}$, \resp,
and the union of such nodes is written as a formal sum.
We write $\bS(X)=\emptyset$, if $X$ is smooth.

The \df{M\"obius moduli dimension} $\bD(X)$ is defined as 
the dimension of the space of M\"obius equivalence classes of 
celestial surfaces $Y\subset\S^n$ \st 
$\bT(Y)=\bT(X)$, $\bS(Y)\cong\bS(X)$ as algebraic sets 
and $\bM(Y)\cong\bM(X)$ as groups.

We use the following notation for subgroups of $\autc(\P^1)$. 
Let the real structure $\sigma\c\P^1\to\P^1$ be defined by $(x:y)\mapsto(\overline{x}:\overline{y})$
so that $\autc(\P^1)\cong\PSL(2,\R)$.
If $p,q,r\in\P^1$
\st $p\neq\sigma(p)$, $q=\sigma(q)$, $r=\sigma(r)$ and $q\neq r$, then we denote
\[
\begin{array}{l@{~}c@{~}l}
\PSO(2)&:=&\set{\varphi\in\autc(\P^1)}{\varphi(p)=p,~ \varphi(\sigma(p))=\sigma(p)} ,\\
\PSX(1)&:=&\set{\varphi\in\autc(\P^1)}{\varphi(q)=q,~ \varphi(r)=r}                 ,\\
\PSE(1)&:=&\set{\varphi\in\autc(\P^1)}{\varphi(x:y)=(x+\alpha\,y: y),~ \alpha\in \R}, \text{ and }\\
\PSA(1)&:=&\set{\varphi\in \autc(\P^1)}{\varphi(r)=r}.
\end{array}.
\]
Notice that $\varphi$ in $\PSO(2)$ or $\PSX(1)$
maps $(x:y)$ up to choice of coordinates to 
$(\cos(\alpha)\,x-\sin(\alpha)\,y:\sin(\alpha)\,x+\cos(\alpha)\,y)$
and $(\alpha\,x:y)$, \resp, for some $\alpha\in\R\setminus\{0\}$.
The elements in $\PSA(1)$ are combinations of elements in $\PSX(1)$ and $\PSE(1)$
(see also \RMK{naming}).

\begin{theorem}
\textrm{\bf(M\"obius automorphisms of celestial surfaces)}
\label{thm:M}
\\
If $X\subseteq\S^n$ is a celestial surface \st $\dim\bM(X)\geq 2$,
then 
$X$ is toric and its M\"obius invariants
$\bT(X)$, $\bS(X)$, $\bM(X)$ and $\bD(X)$
are characterized by a row in the following table (see \DEF{names} for the names):
\begin{center}
\footnotesize
\setlength\extrarowheight{8pt}
\begin{tabular}{ccccl}
$\bT(X)$       & $\bS(X)$                                  & $\bM(X)$                  & $\bD(X)$  & name                  \\\hline
$(2,8,7)$      & $\emptyset$                               & $\PSO(2)\times \PSO(2)$   & $3$       & double Segre surface  \\
$(2,8,5)$      & $\emptyset$                               & $\PSO(2)\times \PSO(2)$   & $2$       & projected dS          \\
$(3,6,5)$      & $\emptyset$                               & $\PSO(2)\times \PSO(2)$   & $2$       & dP6 (see \FIG{dP6})   \\
$(\infty,4,4)$ & $\emptyset$                               & $\PSO(3)$                 & $0$       & Veronese surface      \\
$(4,4,3)$      & $A_1+A_1+A_1+A_1$                         & $\PSO(2)\times \PSO(2)$   & $1$       & ring cyclide          \\     
$(2,4,3)$      & $\underline{A_1}+\underline{A_1}+A_1+A_1$ & $\PSO(2)\times \PSX(1)$   & $0$       & spindle cyclide       \\ 
$(2,4,3)$      & $\underline{A_3}+A_1+A_1$                 & $\PSO(2)\times \PSE(1)$   & $0$       & horn cyclide          \\ 
$(\infty,2,2)$ & $\emptyset$                               & $\PSO(3,1)$               & $0$       & 2-sphere
\end{tabular}
\end{center}
Moreover, if $\bT(X)\notin\{(2,8,7),~(2,8,5),~(\infty,4,4)\}$, then $\bM(X)=\autc(X)$ and if $\bD(X)=0$, then 
$X$ is unique up to M\"obius equivalence.
\end{theorem}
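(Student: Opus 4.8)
The classification is a finite case analysis driven by three inputs that are already available: the structure theory of two-dimensional Lie subgroups acting on a celestial surface, the constraint that circles are conics with real structure compatible with $\S^n$, and the classification of surfaces carrying a two-parameter family of circles. The plan is to first reduce to the algebraic geometry of the abstract surface, then pin down the embedding into $\S^n$, and finally compute the four M\"obius invariants row by row.

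\textbf{Step 1: Reduce to a toric normalization.}
Since $\dim\bM(X)\ge 2$ and $\bM(X)\subseteq\autc(\S^n)$ acts algebraically on $X$, I would take a maximal torus (or a solvable two-dimensional subgroup) inside $\bM(X)$; its generic orbit is already dense, so $X$ is rational and in fact an (almost) toric surface. Using the hypothesis that $X$ is a celestial surface (so $\lambda\ge 2$ and $X$ is not contained in a hyperplane section of $\S^n$), I would invoke \THM{circle} to see that the two pencils of circles through a general point organize $X$ as a blow-up of a quadric or of $\P^2$ at few points, and \THM{vero} to isolate the cases $\lambda=\infty$ (Veronese surface and $2$-sphere). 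The outcome of this step should be: the minimal resolution $\X$ of $X$ is a weak del Pezzo surface of degree $d'\in\{2,4,6,8\}$ (or a Veronese/sphere exception), and its anticanonical model together with the torus action leaves only finitely many possibilities for the pair $(\X,\text{torus})$.

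\textbf{Step 2: Classify the embeddings and read off $\bT$ and $\bS$.}
For each surviving $(\X,\text{torus})$, the linear system cutting out the embedding into $\S^n\subset\P^{n+1}$ is constrained by the requirement that the generating curves of the two pencils be circles, i.e.\ conics whose real locus is a genuine circle in $S^{n+1}$; this forces the embedding to be (a subsystem of) the anticanonical system, hence determines $d$ and $n$ and therefore $\bT(X)=(\lambda,d,n)$. The singular locus $\bS(X)$ is then the image of the $(-2)$-curves on $\X$ under the anticanonical contraction, and the real structure on those curves — whether the contracted configuration is a pair of conjugate nodes, a real tacnode, etc. — is dictated by how the real structure $\sigma$ interacts with the torus; here the subgroups $\PSO(2),\PSX(1),\PSE(1)$ of \S\ref{sec:intro-defs} enter as the distinct real forms of the one-parameter factors. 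This reproduces the eight rows, and it is here that I expect the \emph{main obstacle}: systematically enumerating the real structures compatible with a fixed complex toric model and matching each to the correct node/tacnode configuration is delicate, and it is the step where the Dupin-cyclide degenerations ($(4,4,3)$, $(2,4,3)$ twice) must be separated cleanly from one another and from the higher-degree cases.

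\textbf{Step 3: Compute $\bM$, decide $\bM=\autc(X)$, and settle the moduli dimension $\bD$.}
Once $X$ is normalized, $\autc(X)$ is computed from the automorphisms of $\X$ preserving the anticanonical class and the curve configuration — a standard weak del Pezzo computation — and $\bM(X)=\autc(X)\cap\autc(\S^n)$ is the subgroup of those linearizable on $\P^{n+1}$ preserving the quadric. For $\bT(X)\notin\{(2,8,7),(2,8,5),(\infty,4,4)\}$ one checks that every automorphism of $\X$ already lands in $\autc(\S^n)$, giving $\bM(X)=\autc(X)$; the three exceptional types are exactly the ones where $\autc(X)$ is strictly larger (extra projective automorphisms of the Segre/Veronese model that do not preserve $\S^n$). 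Finally $\bD(X)$ is computed as the dimension of the family of embeddings with the prescribed $(\bT,\bS,\bM)$ modulo $\autc(\S^n)$: a parameter count comparing $\dim$ of the relevant linear system (or of the space of quadrics through the configuration) against $\dim\autc(\S^n)$. When this count yields $0$, the family is a single $\autc(\S^n)$-orbit, so $X$ is unique up to M\"obius equivalence, which is the last assertion. I would verify the $\bD$-values in the table by this count, cross-checking the classical Dupin cyclide case ($\bD=1$ for the ring cyclide) against \THM{D} and the $\mathfrak{sl}_2+\mathfrak{sl}_2$ classification \THM{sl2+sl2} for consistency.
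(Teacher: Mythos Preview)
Your outline has the right overall shape---reduce to a toric model, enumerate cases, compute invariants---but it is missing the paper's central technical device and one genuinely non-obvious geometric step.

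The missing device is the \emph{M\"obius pair} of \SEC{pair}. The paper encodes $X\subset\S^n$ as a pair $(\Ys,Q)$ where $\Ys\subset\P^8$ is the double Segre surface and $Q$ is a hyperquadric of signature $(1,n+1)$ with $\Ys\subset Q$; \PRP{moeb} then identifies $\bM(X)$ with $\{\varphi\in\autc(\Ys):\varphi(Q)=Q\}$, so that $G\subseteq\bM(X)$ if and only if $Q=V(q)$ for some $q\in I_2^G(\Ys)$. This reduces everything to linear algebra: \THM{D} computes $I_2^G(\Ys)$ from $\lie(G)$, and $\bD(X)$ is literally $\dim\P(I_2^G(X))$ (\LEM{M1}). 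Your ``parameter count comparing the linear system against $\dim\autc(\S^n)$'' in Step~3 is not how the paper proceeds and, as stated, does not explain how to obtain the specific values $3,2,2,1$. Likewise your Step~3 description of $\bM(X)=\autc(X)\cap\autc(\S^n)$ is correct as a definition but not as a computation: for the double Segre surface $\autc(X)\cong\PSL(2)\times\PSL(2)$ is six-dimensional, and the intersection with $\autc(\S^7)$ is exactly what the invariant-form machinery computes.

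The missing geometric step is the content of \LEM{M}: proving that $\bM(X)$ is \emph{exactly} $\PSO(2)\times\PSO(2)$ (or one of the two cyclide groups), not merely that it contains a two-dimensional subgroup. The paper shows, via \COR{sl2+sl2}, that any other two-dimensional subalgebra of $\Msl_2\oplus\Msl_2$ contains a one-parameter subgroup $H$ fixing a real fiber $L$ of $\pi_2$ and a real point on it. Stereographically projecting from $\rho(L)$ turns $H$ into Euclidean similarities of $\R^n$ that must simultaneously preserve a line $\fL$ and rotate the horizontal circles that meet $\fL$---an impossibility (\FIG{M}). This same argument eliminates the weak dP6 configuration (\TAB{P1P1}c), which your enumeration in Step~2 would otherwise leave standing. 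You flag ``separating the Dupin-cyclide degenerations'' as the main obstacle but give no mechanism; the stereographic-projection contradiction is the missing idea.

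A smaller point on Step~1: a two-dimensional solvable subgroup with a dense orbit does not by itself give a toric structure. The paper derives toricity differently, in \PRP{blowup}: since $\dim\autc(X)\ge2$ forces $|\pi_i(\Lambda)|\le2$ for the blowup center $\Lambda\subset\P^1\times\P^1$, the torus $\T^1$ embeds into each $\P^1\setminus\pi_i(\Lambda)$.
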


If we replace $\S^n$ with $S^n\cong\S^n(\R)$,
then \THM{M} holds if we replace $\PSO(3,1)$ by $\SO(3)$
and remove the remaining P's in the $\bM(X)$ column.
The case $\bT(X)=(\infty,4,4)$ was already known
and is revisited in \SEC{vero} (see also \citep[Theorem~23]{kol2} and \citep[Section 2.4.3]{olm1}).

A \df{smooth model} of a surface~$X\subset\P^{n+1}$ is a birational morphism $\X\to X$ 
from a nonsingular surface~$\X$, such that this morphism does not contract $(-1)$-curves.
If $\lambda<\infty$, then the smooth models of the $\lambda$-circled
surfaces in \THM{M} are isomorphic to $\S^1\times\S^1$ blownup up in either 0, 2 or 4
complex conjugate points (see \NTN{S1S1}).
A smooth model of a Veronese surface is isomorphic to~$\P^2$ 
\st $\sigma\c\P^2\to\P^2$ sends $x$ to $(\overline{x_0}:\overline{x_1}:\overline{x_2})$ 
(see \LEM{invo-P1P1}c).

Instead of $\S^n$ one could also consider hyperquadrics of different signature. 
Although we do not pursue this, we cannot resist to mention the following result, 
which will come almost for free during our investigations:

\begin{corollary}
\label{cor:iqf}
If $Q\subset\P^8$ is a quadric hypersurface of signature $(4,5)$
or $(3,6)$, 
then there exists a unique double Segre surface $X\subset Q$
\st $\autc(X)\subset \autc(Q)$ and $X$ is isomorphic to $\S^1\times\S^1$ and $\S^2$, \resp.
\end{corollary}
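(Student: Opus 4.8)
The plan is to leverage Theorem \ref{thm:M} in combination with the uniqueness-up-to-Möbius-equivalence statement for the double Segre surface together with a signature bookkeeping argument. First I would recall that the double Segre surface $X\subset\S^7$ of Theorem \ref{thm:M} is the embedding of $\S^1\times\S^1$ by the complete anticanonical (bidegree $(2,2)$) linear system, spanning a $\P^7$; its automorphism group $\autc(X)\cong\PSO(2)\times\PSO(2)$ and, more relevantly, the Zariski closure of the linear span of its $\autc(X)$-orbit structure determines a pencil of quadrics containing $X$. The key observation is that $X$, being cut out by quadrics, lies on a net (or at least a pencil) of quadric hypersurfaces in $\P^7$, and among these there is a distinguished one — the Möbius quadric $\S^7$ itself, of signature $(1,8)$ — but after a suitable real projective change of coordinates one obtains quadrics in this linear system of other signatures.

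The main steps, in order: (1) Identify, inside $\aut(\P^8)$-conjugacy, the linear subvariety $V=\P^8$ spanned by a double Segre surface and show that the action of $G:=\PSO(2)\times\PSO(2)$ on $V$ decomposes $\R^9$ (the underlying real vector space, noting the double Segre sits in a $\P^7$ so one must be slightly careful — I expect the statement really concerns the $(2,2)$-embedding together with the ambient quadric, giving the $\P^8$) into irreducible $G$-summands; each $G$-invariant quadratic form is then a linear combination of a fixed finite set of ``eigen-forms,'' one per summand (by Schur). (2) Compute the signatures of the members of this space of $G$-invariant quadrics: because the summands pair up under the two $\SO(2)$-factors into two-dimensional rotation modules plus trivial modules, the possible signatures of nondegenerate $G$-invariant quadrics form a finite explicit list, which will include $(1,8)$, $(4,5)$ and $(3,6)$ (and their negatives). (3) For the signatures $(4,5)$ and $(3,6)$, pick the corresponding $G$-invariant quadric $Q$; then $X\subset Q$ by construction and $\autc(X)=G\subset\autc(Q)=\SO(Q)_\circ$. (4) For uniqueness: given any double Segre surface $X'\subset Q$ with $\autc(X')\subset\autc(Q)$, its automorphism group is a copy of $\PSO(2)\times\PSO(2)$ inside $\SO(Q)_\circ$; by conjugacy of maximal tori (or of such two-dimensional abelian subgroups with the right weights) in $\SO(Q)_\circ$ I can move $X'$ to $X$ by an element of $\autc(Q)$, hence $X'=X$ up to the action of $\autc(Q)$ — which is exactly the asserted uniqueness. (5) Finally, the identification of the abstract surface: over $\C$ a double Segre surface is $\P^1\times\P^1$, and the real structure is read off from the real structure on $Q$; for signature $(4,5)$ the induced involution on $\P^1\times\P^1$ has the conjugation type giving real model $\S^1\times\S^1$, whereas for $(3,6)$ it gives $\S^2$ — this is the same dichotomy already recorded in Lemma \ref{lem:invo-P1P1} and Notation \ref{ntn:S1S1}, so I would simply cite it after matching up the eigenvalue signs of $\sigma$ on the $G$-summands.

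The hard part will be step (2) together with the fine bookkeeping in (5): one must be genuinely careful about \emph{which} sign pattern of the $G$-invariant quadratic form forces the real structure on $\P^1\times\P^1$ into the $\S^1\times\S^1$ type versus the $\S^2$ type, since both are Galois-twisted forms of the same complex surface and the distinction is precisely the content that makes the two cases of the corollary genuinely different. I expect that the cleanest route is to write the double Segre parametrization explicitly in terms of the two circle-rotation actions — i.e. coordinates $(\cos k\theta,\sin k\theta)$ for $k\in\{0,1,2\}$ in each factor — so that the $G$-invariant quadrics become diagonal in an explicit basis, their signatures become a transparent sign-count, and the real structure acts by an explicit permutation-with-conjugation; then (2) and (5) reduce to inspection. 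Everything else (that $X$ is cut out by quadrics, that $\autc(X)=\bM(X)$ in this case, that such $X$ is unique up to Möbius equivalence) is supplied verbatim by Theorem \ref{thm:M}, so no new geometry is needed beyond the linear-algebra of $G$-invariant forms on the ambient space.
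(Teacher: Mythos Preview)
Your proposal has a genuine gap: you misidentify $\autc(X)$. For the double Segre surface $X=\Ys\cong\P^1\times\P^1$ embedded by bidegree $(2,2)$ forms, the identity component of the biregular automorphism group is $\autc(\P^1)\times\autc(\P^1)\cong\PSL(2)\times\PSL(2)$, a six-dimensional group, not $\PSO(2)\times\PSO(2)$. What Theorem~\ref{thm:M} records for the double Segre surface is $\bM(X)=\autc(X)\cap\autc(\S^7)\cong\PSO(2)\times\PSO(2)$, and the ``Moreover'' clause there explicitly excludes the double Segre surface from the equality $\bM(X)=\autc(X)$. The hypothesis of the corollary is $\autc(X)\subset\autc(Q)$, so you must work with the full six-dimensional group.

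This error propagates. With $G=\PSO(2)\times\PSO(2)$ the space $I_2^G(\Ys)$ is four-dimensional (Lemma~\ref{lem:iqf}a), and you are then forced into your step~(4) conjugacy-of-tori argument, which is not enough: two double Segre surfaces in $Q$ could share the same maximal torus of $\autc(Q)$ without being equal, since many members of the pencil $I_2^G(\Ys)$ have the required signature. By contrast, the paper takes $G=\PSL(2)\times\PSL(2)=\autc(\Ys)$ and invokes Lemma~\ref{lem:iqf}d, where $I_2^G(\Ys)$ is \emph{one}-dimensional. Uniqueness is then immediate from Proposition~\ref{prp:moeb}, and the signature dichotomy $(4,5)$ versus $(3,6)$ falls out by inspecting the single invariant form under the real structures $\sigma_0$ and $\sigma_3$ (corresponding to $\S^1\times\S^1$ and $\S^2$ respectively, via Remark~\ref{rmk:invo-P8} and Lemma~\ref{lem:invo-P1P1}b). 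Your step~(5) identification of the real forms is on the right track, but the rest needs to be rebuilt around the correct group.

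A minor additional confusion: the double Segre surface spans $\P^8$, not $\P^7$; the nine bidegree-$(2,2)$ monomials are linearly independent. Its containment in $\S^7$ is the containment in a quadric hypersurface of $\P^8$, not a drop in span.
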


Our methods are constructive and allow for 
explicit coordinate description of the moduli space of the celestial surfaces.
See \citep[\texttt{moebius\_aut}]{maut} for an implementation using \citep[Sage]{sage}.

\begin{definition}
{\bf (names of surfaces)}
\label{def:names}
\\
A surface $X\subset \S^3$ is called a \df{spindle cyclide}, \df{horn cyclide} or \df{ring cyclide}
if there exists a stereographic projection $\pi\c S^3\dto \R^3$ 
\st $\pi(X(\R))\subset \R^3$ is a circular cone, circular cylinder and ring torus, \resp.
We call $X\subset\P^5$ a \df{Veronese surface} 
if there exists a biregular isomorphism $\P^2\to X$ 
whose components form a basis of the vector space of degree 2 forms.
We call $X\subset\P^8$ a \df{double Segre surface} (or \df{dS} for short)
if there exists a biregular isomorphism $\P^1\times\P^1\to X$ 
whose components form a basis of the vector space of bidegree (2,2) forms.
A \df{projected dS} is a surface $X$ that is a degree preserving 
linear projection of a dS.
We call $X\subset\P^6$ a \df{sextic del Pezzo surface} (or \df{dP6} for short)
if $X$ is an anticanonical model of $\P^1\times\P^1$ blown up in two general complex points.
\END
\end{definition}

\newpage
\begin{remark}
{\bf (overview)}
\\
Suppose that $X$ is a celestial surface of type $\bT(X)=(\lambda,d,n)$
\st $\dim \bM(X)\geq 2$ and $\lambda<\infty$.

In \SEC{L} we classify $X$ under the additional assumption that $X$ is toric.

In \SEC{segre} we give coordinates for a double Segre surface $\Ys\subset\P^8$
and we investigate actions of real structures on $\Ys$.
This will be needed for finding quadratic forms of signature $(1,n+1)$
in the ideal of~$\Ys$.

We establish in \SEC{blowup} that $X$ must be toric.
Moreover, there exists a birational linear projection $\rho\c\Ys\dto X$
and $\bM(X)$ is isomorphic to a subgroup of $\autc(\Ys)$ that leaves the center of $\rho$ invariant.
We characterize the possible configurations for the center of $\rho$ in $\Ys$
and for each such configuration we restrict the possible values for $\bT(X)$ and $\bM(X)$.

In \SEC{pair} we encode, up to M\"obius equivalence, $X$
as an $\bM(X)$-invariant quadratic form in the ideal of $\Ys$.
From the Lie algebra of $\bM(X)$, we recover the 
subspace of $\bM(X)$-invariant quadratic forms and each invariant form
of signature \mbox{$(1,n+1)$}
in this space encodes a possible M\"obius equivalence class for $X$.

In \SEC{lie} we show how toric real structures 
act on the Lie algebra of $\autc(\Ys)$ and we recall 
the classification of Lie algebras of complex subgroups of $\autc(\Ys)$.

In \SEC{iqf-segre} we make a case distinction on the established
configurations for the center of~$\rho$ 
and Lie algebras of $\bM(X)$.
If~$X$ is not a spindle or horn cyclide, then 
$\bM(X)\cong\SO(2)\times\SO(2)$ and we obtain   
coordinates for the $\bD(X)+1$ generators of all 
$\bM(X)$-invariant quadratic forms of signature $(1,n+1)$
in the ideal of $\Ys$. 
This enables us to conclude \SEC{iqf-segre} with a proof for \THM{M} and \COR{iqf}.

Finally, we present in \SEC{vero} an alternative proof 
for the known $\lambda=\infty$ case by applying
the same methods as before, but with $\Ys$ replaced with the Veronese surface $\Yc\subset\P^5$.
\END
\end{remark}

\section{Toric celestial surfaces}
\label{sec:L}

In this section we classify toric celestial surfaces and their real structures.

Suppose that $X\subset\P^n$ is a surface that is not contained in a hyperplane section.
The \df{linear normalization} $X_N\subset\P^m$ of $X$
is defined as the image of its smooth model $\X$ via the map associated to 
the complete linear series of hyperplane sections of $X$.
Thus $m\geq n$, $X$ is a linear projection of $X_N$ and $X_N$ is unique up to $\aut(\P^m)$.

Let $\T^1:=(\C^*,1)$ denote the \df{algebraic torus}.
Recall that $X$ is \df{toric} if 
there exists an embedding $i\c\T^2\hto X$
\st $i(\T^2)$ is dense in $X$ and \st the action of $\T^2$ on itself
extends to an action on $X$.

If $X$ is a toric surface, then
there exists, up to projective equivalence,
a monomial parametrization $\xi\c\T^2\to X_N$.
The \df{lattice polygon of $X$} is defined as 
the convex hull of the points in the lattice $\Z^2\subset\R^2$,
whose coordinates are defined by the exponents of the components of $\xi$. 
The antiholomorphic involution $\sigma\c X\to X$
induces an involution 
$\sigma\c\T^2\to\T^2$.
Consequently, $\sigma$ induces a unimodular involution $\Z^2\to\Z^2$ that
leaves the lattice polygon of~$X$ invariant.

\begin{notation}
\label{ntn:nota}
By abuse of notation we denote 
involutions on algebraic structures, that 
correspond functorially with the real structure $\sigma\c X\to X$, by $\sigma$ as well.
\END 
\end{notation}

A lattice projection 
$\Z^2\subset\R^2\to\Z^1\subset\R^1$
induces a toric map $X_N\to\P^1$. 
We call a family of curves on $X_N$ \df{toric} if the family can be defined 
by the fibers of a toric map.
A family of circles on $X$ is called \df{toric} if it corresponds to a toric 
family on $X_N$ via a linear projection $X_N\to X$.
The toric families of circles 
that cover a toric surface~$X$ that is not covered by complex lines,
correspond to the projections of the lattice polygon of~$X$
to a line segment that is of minimal width among all such projections
\citep[Proposition~31]{nls1}.

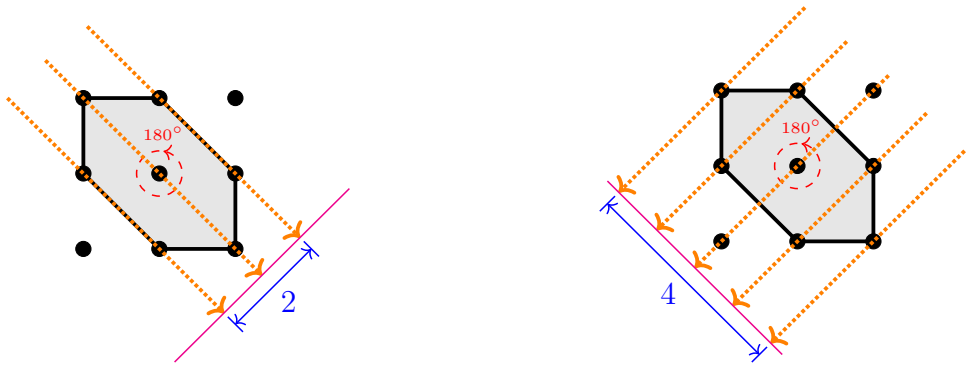
\begin{figure}[!ht]
\centering
\begin{tikzpicture}

\draw[draw=black, fill=black!10, line width=0.5mm] (-1.0,+1.0) to (+0.0,+1.0) 
                                                               to (+1.0,+0.0) 
                                                               to (+1.0,-1.0) 
                                                               to (+0.0,-1.0)
                                                               to (-1.0,+0.0)
                                                               to (-1.0,+1.0);

\draw[draw=black, fill=black] (-1.0,+1.0) circle [radius=0.1];
\draw[draw=black, fill=black] (+0.0,+1.0) circle [radius=0.1];
\draw[draw=black, fill=black] (+1.0,+1.0) circle [radius=0.1];
\draw[draw=black, fill=black] (-1.0,+0.0) circle [radius=0.1];
\draw[draw=black, fill=black] (+0.0,+0.0) circle [radius=0.1];
\draw[draw=black, fill=black] (+1.0,+0.0) circle [radius=0.1];
\draw[draw=black, fill=black] (-1.0,-1.0) circle [radius=0.1];
\draw[draw=black, fill=black] (+0.0,-1.0) circle [radius=0.1];
\draw[draw=black, fill=black] (+1.0,-1.0) circle [radius=0.1];

\draw[draw=red, dashed, line width=0.2mm, ->] (0,0.3) arc (90:360+80:0.3) node[red,above] {\tiny $180^\circ$};

\draw[draw=orange, densely dotted, line width=0.5mm, ->] (-2.5+0.5,+1.5-0.5) to (+0.5+0.35,-1.5-0.35);
\draw[draw=orange, densely dotted, line width=0.5mm, ->] (-1.5+0.0,+1.5-0.0) to (+1.5-0.15,-1.5+0.15);
\draw[draw=orange, densely dotted, line width=0.5mm, ->] (-0.5-0.45,+1.5+0.45) to (+2.5-0.65,-1.5+0.65);

\draw[draw=magenta, line width=0.2mm] (+0.2,-2.5) to (+2.5,-0.2);

\draw[draw=blue, line width=0.2mm] (+0.0+0.9,+0.0-1.9) 
                                to (+0.2+0.9,-0.2-1.9);

\draw[draw=blue, line width=0.2mm] (+0.0+1.9,+0.0-0.9) 
                                to (+0.2+1.9,-0.2-0.9);

\draw[draw=blue, line width=0.2mm, <->] (+0.0+1.0,+0.0-2.0) 
                                     to (+0.0+2.0,+0.0-1.0);                                

\node[blue] at (+1.7,-1.7) {$2$};                                     
\end{tikzpicture}
\hspace{3cm}
\begin{tikzpicture}

\draw[draw=black, fill=black!10, line width=0.5mm] (-1.0,+1.0) to (+0.0,+1.0) 
                                                               to (+1.0,+0.0) 
                                                               to (+1.0,-1.0) 
                                                               to (+0.0,-1.0)
                                                               to (-1.0,+0.0)
                                                               to (-1.0,+1.0);

\draw[draw=black, fill=black] (-1.0,+1.0) circle [radius=0.1];
\draw[draw=black, fill=black] (+0.0,+1.0) circle [radius=0.1];
\draw[draw=black, fill=black] (+1.0,+1.0) circle [radius=0.1];
\draw[draw=black, fill=black] (-1.0,+0.0) circle [radius=0.1];
\draw[draw=black, fill=black] (+0.0,+0.0) circle [radius=0.1];
\draw[draw=black, fill=black] (+1.0,+0.0) circle [radius=0.1];
\draw[draw=black, fill=black] (-1.0,-1.0) circle [radius=0.1];
\draw[draw=black, fill=black] (+0.0,-1.0) circle [radius=0.1];
\draw[draw=black, fill=black] (+1.0,-1.0) circle [radius=0.1];

\draw[draw=red, dashed, line width=0.2mm, ->] (0,0.3) arc (90:360+80:0.3) node[red,above] {\tiny $180^\circ$};

\draw[draw=orange, densely dotted, line width=0.5mm, ->] (-0.5+0.60,+1.5+0.60) to (-3.5+1.15,-1.5+1.15);
\draw[draw=orange, densely dotted, line width=0.5mm, ->] (+0.5+0.15,+1.5+0.15) to (-2.5+0.65,-1.5+0.65);
\draw[draw=orange, densely dotted, line width=0.5mm, ->] (+1.5-0.30,+1.5-0.30) to (-1.5+0.15,-1.5+0.15);
\draw[draw=orange, densely dotted, line width=0.5mm, ->] (+2.5-0.80,+1.5-0.80) to (-0.5-0.35,-1.5-0.35);
\draw[draw=orange, densely dotted, line width=0.5mm, ->] (+3.5-1.30,+1.5-1.30) to (+0.5-0.85,-1.5-0.85);

\draw[draw=magenta, line width=0.2mm] (-0.2,-2.5) to (-2.5,-0.2);

\draw[draw=blue, line width=0.2mm] (-0.0-0.4,+0.0-2.4)
                                to (-0.2-0.4,-0.2-2.4);

\draw[draw=blue, line width=0.2mm] (-0.0-2.4,+0.0-0.4) 
                                to (-0.2-2.4,-0.2-0.4);

\draw[draw=blue, line width=0.2mm, <->] (-0.2-0.3,-0.2-2.3) 
                                     to (-0.2-2.3,-0.2-0.3);                                

\node[blue] at (-1.7,-1.7) {$4$};                                     
\end{tikzpicture}
\caption{Width of lattice polygon along directions $\searrow$ and $\swarrow$.}
\label{fig:width}
\vspace{5mm}
\end{figure}

In \FIG{width} we see two examples of lattice projections of a 
lattice polygon. The width of the polygon in the 
$\swarrow$ direction is $4$.
The lattice polygon
attains its minimal width of $2$ in the directions 
$\rightarrow$, $\downarrow$ and $\searrow$. 
Notice that the lines through the origin, along these three directions,
are invariant under
lattice involution defined by $180^\circ$ rotation around the central lattice point.

The \df{lattice type} $\bL(X)$ of a toric surface $X$
consists of the following data
\begin{Menum}
\item The lattice polygon $\Lambda\subset\R^2$ of $X$.
\item The unimodular involution $\Z^2\to\Z^2$ that is induced by the real structure $\sigma\c X\to X$.
\item The lattice projections that correspond to toric families of circles. We will represent such projections by arrows ($\downarrow$, $\rightarrow$, $\searrow$, $\swarrow$) 
pointing in the corresponding direction. 
\end{Menum}

Lattice types $\bL(X)$ and $\bL(X')$ are equivalent if 
there exists a unimodular isomorphism between
their lattice polygons that is compatible with the 
unimodular involution.
Data 3 is uniquely determined by data 1 and data 2.
The unimodular involutions $\Z^2\to \Z^2$,
defined by
$(x,y)\mapsto(x,y)$, $(x,y)\mapsto(-x,y)$, $(x,y)\mapsto(-x,-y)$ and $(x,y)\mapsto(y,x)$,
are represented by their symmetry axes in the lattice polygons.

\begin{proposition}
\textrm{\bf(classification of toric celestial surfaces)}
\label{prp:L}
\\
If $X\subseteq\S^n$ is a toric surface that is covered 
by at least two toric families of circles, 
then its lattice type $\bL(X)$, together with $\bT(X)$ and the name of~$X$,
is up to equivalence characterized by one of the eight cases in \TAB{L}. 
\end{proposition}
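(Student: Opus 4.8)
The plan is to reduce the classification to a finite combinatorial search over lattice polygons equipped with a unimodular involution, using the characterization of toric families of circles recalled just before the statement.

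First I would fix the setup. Since $X\subseteq\S^n$ is a toric surface covered by at least two toric families of circles, I pass to the linear normalization $X_N\subset\P^m$ and its monomial parametrization $\xi\c\T^2\to X_N$, so that $X$ carries a lattice polygon $\Lambda\subset\R^2$, well-defined up to unimodular equivalence, together with the induced unimodular involution $\sigma\c\Z^2\to\Z^2$. By \citep[Proposition~31]{nls1}, the toric families of circles covering $X$ correspond exactly to the lattice projections $\Lambda\to\R^1$ of minimal width, provided $X$ is not covered by complex lines; so the first sub-step is to dispose of the case where $X$ \emph{is} covered by lines, and then the hypothesis ``covered by at least two toric families of circles'' translates into: \emph{$\Lambda$ admits at least two projection directions attaining the minimal width}, call it $w=w(\Lambda)$.

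Next comes the core finiteness argument. A circle is a conic, so each toric family of circles is a pencil (or larger linear system) of conics; pulling back a general hyperplane section shows that the minimal width $w(\Lambda)$ equals $2$, and more generally each fibre of a minimal-width projection maps to a curve of degree $\le 2$ on $X_N$. Hence I want to enumerate, up to unimodular equivalence, all lattice polygons $\Lambda$ with lattice-width $2$ having $\ge 2$ width-$2$ directions, and carrying a unimodular involution preserving $\Lambda$ (and the set of minimal-width directions, since $\sigma$ permutes the toric families). Width $2$ plus the existence of two independent width-$2$ directions is a strong constraint: after a unimodular change of coordinates one width-$2$ direction is the projection to the $y$-axis, so $\Lambda\subset\R\times[0,2]$; the second direction then pins down the possible ``horizontal extents'', and a short case analysis (on how many of the three lattice lines $y=0,1,2$ the polygon meets, and on the slopes realizing the second minimal direction) yields a finite list. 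For each surviving $\Lambda$ I read off $\lambda$ (the number of minimal-width directions, counted appropriately — some polygons like the hexagon have three), the degree $d$ (twice the normalized area of $\Lambda$, i.e.\ the lattice area formula, possibly corrected by the projection $X_N\to X$ when $X\subsetneq X_N$), and the embedding dimension $n$ (number of lattice points of $\Lambda$, minus $1$, again adjusted for projection), and I record which unimodular involutions preserve both $\Lambda$ and its minimal-direction set — these are exactly the candidates in the table, the involutions $(x,y)\mapsto(\pm x,\pm y),(y,x)$ listed in the paragraph on $\bL(X)$.

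Finally I would cross-check each case against the requirement that $X$ actually embeds in some M\"obius quadric $\S^n$ — equivalently that the ideal of $X_N$ (or of its projection) contains a quadratic form of signature $(1,n+1)$ — but at the level of \PRP{L} this is only needed to confirm realizability, and it will be carried out in detail in later sections; here it suffices to note that each listed polygon is realized by one of the named surfaces (double Segre, projected dS, dP6, the cyclides, the sphere, and the Veronese-type polygon for the $\lambda=\infty$ degenerations that are excluded by the ``at least two'' finiteness). Matching the eight surviving $(\Lambda,\sigma,\text{directions})$ with the eight rows of \TAB{L} then completes the proof. \textbf{The main obstacle} I anticipate is the combinatorial enumeration of width-$2$ polygons with multiple minimal directions and a compatible involution: it is elementary but fiddly, and the bookkeeping that keeps $\lambda$, $d$, $n$ correct under the possible non-normal projection $X_N\to X$ (which is what separates, e.g., the double Segre surface from its projected form, and the Veronese polygon's role) is where an off-by-one or a missed case is easiest to make.
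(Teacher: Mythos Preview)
Your overall strategy---reduce to an enumeration of lattice polygons with a compatible unimodular involution and at least two minimal-width directions---matches the paper's approach. However, two constraints that do real work in the paper's proof are missing from your plan.

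First, you require only that $\sigma$ preserve the \emph{set} of minimal-width directions. That is too weak: a toric family of circles consists of real conics, so the corresponding lattice projection must be \emph{individually} fixed by $\sigma$, not merely sent to another minimal direction. This is the paper's Claim~4, and it is what eliminates, for instance, the Veronese polygon with the diagonal involution $(x,y)\mapsto(y,x)$ (\TAB{L2}c), which has three minimal directions permuted by~$\sigma$ but only one fixed. Without this you will not cut down to eight cases.

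Second, you do not use the fact that $X(\R)\subset S^n$ contains no real lines. A length-one boundary edge of $\Lambda$ that is $\sigma$-invariant corresponds to a real line on $X_N$, hence on $X$; this is impossible on the sphere. The paper formalizes this as Claim~3 and uses it to discard several candidate pairs $(\Lambda,\sigma)$---for example, the polygon of \TAB{L2}d with the identity involution. Your enumeration will produce spurious cases without it.

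A smaller methodological difference: the paper does not bound $\Lambda$ via ``width $2$ in two directions'' alone. It first invokes \THM{circle}b to pin down the admissible pairs $(i,b)$ of interior and boundary lattice points (this is where the sectional-genus and anticanonical-degree interpretation of $i,b$ enters), and then either reads the polygon off the known list of reflexive/del Pezzo polygons or checks directly inside the $3\times 3$ grid. Your width-only bound is correct in principle, but carrying it through cleanly still seems to require the $(i,b)$ bookkeeping you were hoping to avoid; in particular, your formula for $n$ as ``number of lattice points minus~$1$'' only gives the linear normalization $X_N\subset\P^m$, and the range of possible $n$ for the projected surfaces (e.g.\ $3\le n\le 7$ for the dS) has to come from elsewhere.
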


\begin{corollary}
\textrm{\bf(classification of toric celestial surfaces)}
\label{cor:L}
\begin{Mlist}
\item[\bf a)]
The antiholomorphic involutions of the double Segre surface --- that act as unimodular 
involutions as in \TAB{L}a, \TAB{L2}a and \TAB{L2}b --- are 
inner automorphic via $\aut(\P^1\times\P^1)$.

\item[\bf b)]
The antiholomorphic involutions of the Veronese surface --- that act as unimodular 
involutions as in \TAB{L}d and \TAB{L2}c --- are 
inner automorphic via $\aut(\P^2)$.
%
\end{Mlist}
\end{corollary}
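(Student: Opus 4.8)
The plan is to reduce both parts to one observation: a toric antiholomorphic involution that fixes a point of the dense torus $\T^2$ is conjugate, through an element of $\T^2$, to the monomial real structure realizing the relevant unimodular involution, and such a monomial real structure is, by construction, inner automorphic via $\aut(\P^1\times\P^1)$, resp. $\aut(\P^2)$.

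First, the set-up. The double Segre embedding $\P^1\times\P^1\hookrightarrow\P^8$ is defined by the complete linear system of bidegree $(2,2)$ forms on $\P^1\times\P^1$ and the Veronese embedding $\P^2\hookrightarrow\P^5$ by the complete system of degree $2$ forms (\DEF{names}); both are equivariant, so they identify $\aut(\Ys)$ with $\aut(\P^1\times\P^1)$ and $\aut$ of the Veronese surface with $\aut(\P^2)=\PGL(3,\C)$, compatibly with the ambient projective linear actions. Hence it suffices to work on the smooth model and to conjugate the real structure, by an element of $\aut(\P^1\times\P^1)$ (resp. $\aut(\P^2)$), into a standard monomial one. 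Fix one of the lattice types in \TAB{L}a, \TAB{L2}a, \TAB{L2}b (resp. \TAB{L}d, \TAB{L2}c): it records the lattice polygon --- the square $[0,2]^2$, resp. the triangle $\operatorname{conv}\{(0,0),(2,0),(0,2)\}$ --- a unimodular involution $M\colon\Z^2\to\Z^2$ preserving it, and at least one toric family of circles. By the discussion in \SEC{L} the real structure $\sigma$ restricts to an involution of $\T^2$ inducing $M$ on the character lattice, so in multiplicative coordinates $\sigma|_{\T^2}\colon t\mapsto M(\overline{t})\cdot a$ for a constant $a\in\T^2$ with $M(\overline{a})\,a=1$.

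The key step is to kill the twist $a$. The toric family of circles recorded in the lattice type is cut out by fibers of a toric morphism, whose general fiber meets the dense torus; being a circle, this fiber is a conic with infinite real locus, so it has real points lying in $\T^2$ and $\T^2(\R)\neq\emptyset$. Choosing $t_0\in\T^2(\R)$ gives $t_0=M(\overline{t_0})\,a$, hence $a=t_0\,M(\overline{t_0})^{-1}$; conjugating $\sigma$ by the translation $t\mapsto t_0^{-1}t$, which lies in $\T^2\subseteq\aut(\P^1\times\P^1)$ (resp. $\T^2\subseteq\aut(\P^2)$), changes $a$ into $1$ and turns $\sigma$ into the monomial real structure $t\mapsto M(\overline{t})$. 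This is the restriction of the monomial antiholomorphic involution of $\P^1\times\P^1$ (resp. $\P^2$) determined by $M$ --- coordinatewise complex conjugation composed with the toric automorphism realizing $M$ --- which is precisely the standard representative; so $\sigma$ is inner automorphic via $\aut(\P^1\times\P^1)$ (resp. $\aut(\P^2)$). This proves a) and b). For b) one can alternatively remark that an antiholomorphic involution of $\P^2$ with no real points cannot preserve a complex torus $\T^2\subseteq\P^2$, whose own real locus is never empty; so the split real form is the only toric one and there is nothing to normalize.

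The one point deserving attention is the twist $a$. A priori the toric antiholomorphic involutions inducing a fixed $M$ are parametrized by $H^1(\operatorname{Gal}(\C/\R),\T^2)$, which for $M=-\mathrm{id}$ is $(\Z/2\Z)^2$, and all but one of the corresponding conjugacy classes consist of real structures with empty real locus that are not inner. The content of the corollary is therefore the observation that the toric families of circles in the lattice type force $\T^2(\R)\neq\emptyset$ --- equivalently, rule out these twisted forms --- and isolate the split, inner class; granting that, the remaining steps are the elementary conjugation above.
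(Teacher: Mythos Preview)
Your argument and the paper's are genuinely different. The paper's proof is two lines: it invokes \LEM{invo-P1P1}b (the classification of real structures on $\P^1\times\P^1$ up to conjugacy) and observes that a celestial double Segre surface carries two families of conics with real points, which among the four normal forms $\sigma_+\times\sigma_+$, $\sigma_+\times\sigma_-$, $\sigma_-\times\sigma_-$, $\sigma_s$ singles out $\sigma_+\times\sigma_+$. Part b) is immediate from \LEM{invo-P1P1}c. You instead run a toric/Galois-cohomology argument: write $\sigma|_{\T^2}(t)=M(\overline{t})\cdot a$, use a real torus point coming from a circle to kill the cocycle $a$, and land on the monomial real structure $t\mapsto M(\overline{t})$.

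That reduction is correct and nicely done, but it does not finish part a). After your normalization you obtain, for the three lattice types in question, \emph{three different} monomial real structures on $\P^1\times\P^1$ --- one for each unimodular involution $M\in\{\sigma_0,\sigma_1,\sigma_2\}$. The corollary asserts that these three are mutually conjugate in $\aut(\P^1\times\P^1)$, and your sentence ``such a monomial real structure is, by construction, inner automorphic via $\aut(\P^1\times\P^1)$'' does not establish that. Writing $\sigma_M=M\circ\sigma_0$ with $M\in\aut(\P^1\times\P^1)$ is not the same as exhibiting $\gamma$ with $\gamma^{-1}\sigma_0\gamma=\sigma_M$; the latter requires $M$ to be a coboundary, i.e.\ $M=\gamma\cdot\sigma_0(\gamma)^{-1}$, which must be checked. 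Concretely, on the first $\P^1$-factor the involution $(s:t)\mapsto(\overline{t}:\overline{s})$ (arising from $\sigma_1,\sigma_2$) is conjugate to $\sigma_+$ because both have nonempty real locus --- but that is exactly the content of \LEM{invo-P1P1}a, which you have not invoked. Once you add that step (or the explicit conjugating matrix), your argument goes through; without it, part a) has a gap. Your treatment of b) is fine, and your closing remark about $H^1$ is a pleasant gloss on why the celestial hypothesis matters.
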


\begin{table}
\caption{See \PRP{L}. 
Lattice types of toric celestial surfaces up to equivalence,
together with the corresponding name and possible celestial types. 
For the celestial types we have $3\leq n\leq 7$ and $4\leq m\leq 5$.
The directions correspond to the toric families of circles.}
\label{tab:L}
\begin{tabular}{@{}c@{\hspace{1.3cm}}c@{\hspace{1.3cm}}c@{\hspace{1.3cm}}c@{}}
\begin{tikzpicture}[scale=1]

\draw[draw=black, fill=black!10, line width=0.5mm] (-1.0,+1.0) to (+1.0,+1.0) to (+1.0,-1.0) to (-1.0,-1.0) to (-1.0,+1.0);

\draw[draw=black, fill=black] (-1.0,+1.0) circle [radius=0.1];
\draw[draw=black, fill=black] (+0.0,+1.0) circle [radius=0.1];
\draw[draw=black, fill=black] (+1.0,+1.0) circle [radius=0.1];
\draw[draw=black, fill=black] (-1.0,+0.0) circle [radius=0.1];
\draw[draw=black, fill=black] (+0.0,+0.0) circle [radius=0.1];
\draw[draw=black, fill=black] (+1.0,+0.0) circle [radius=0.1];
\draw[draw=black, fill=black] (-1.0,-1.0) circle [radius=0.1];
\draw[draw=black, fill=black] (+0.0,-1.0) circle [radius=0.1];
\draw[draw=black, fill=black] (+1.0,-1.0) circle [radius=0.1];

\node[red] at (0,0.5) {id};

\node at (0,2) {dS};
\node at (0,1.5) {$(2,8,n)$};
\node at (0,-1.5) {$\rightarrow\downarrow$};
\node at (0,-2.0) {\bf\large a.};
\end{tikzpicture}
&
\begin{tikzpicture}[scale=1]

\draw[draw=black, fill=black!10, line width=0.5mm] (-1.0,+1.0) to (+0.0,+1.0) 
                                                               to (+1.0,+0.0) 
                                                               to (+1.0,-1.0) 
                                                               to (+0.0,-1.0)
                                                               to (-1.0,+0.0)
                                                               to (-1.0,+1.0);

\draw[draw=black, fill=black] (-1.0,+1.0) circle [radius=0.1];
\draw[draw=black, fill=black] (+0.0,+1.0) circle [radius=0.1];
\draw[draw=black, fill=black] (+1.0,+1.0) circle [radius=0.1];
\draw[draw=black, fill=black] (-1.0,+0.0) circle [radius=0.1];
\draw[draw=black, fill=black] (+0.0,+0.0) circle [radius=0.1];
\draw[draw=black, fill=black] (+1.0,+0.0) circle [radius=0.1];
\draw[draw=black, fill=black] (-1.0,-1.0) circle [radius=0.1];
\draw[draw=black, fill=black] (+0.0,-1.0) circle [radius=0.1];
\draw[draw=black, fill=black] (+1.0,-1.0) circle [radius=0.1];

\draw[draw=red, dashed, line width=0.5mm, ->] (0,0.3) arc (90:360+80:0.3) node[red, above left] {\small $180^\circ$};

\node at (0,2) {dP6};
\node at (0,1.5) {$(3,6,m)$};
\node at (0,-1.5) {$\rightarrow\downarrow\searrow$};
\node at (0,-2.0) {\bf\large b.};
\end{tikzpicture}
&
\begin{tikzpicture}[scale=1]

\draw[draw=black, fill=black!10, line width=0.5mm] (-1.0,+0.0) to (+0.0,+1.0) 
                                                               to (+1.0,+0.0) 
                                                               to (+1.0,-1.0) 
                                                               to (-1.0,-1.0)
                                                               to (-1.0,+0.0)
                                                               to (-1.0,+0.0);

\draw[draw=black, fill=black] (-1.0,+1.0) circle [radius=0.1];
\draw[draw=black, fill=black] (+0.0,+1.0) circle [radius=0.1];
\draw[draw=black, fill=black] (+1.0,+1.0) circle [radius=0.1];
\draw[draw=black, fill=black] (-1.0,+0.0) circle [radius=0.1];
\draw[draw=black, fill=black] (+0.0,+0.0) circle [radius=0.1];
\draw[draw=black, fill=black] (+1.0,+0.0) circle [radius=0.1];
\draw[draw=black, fill=black] (-1.0,-1.0) circle [radius=0.1];
\draw[draw=black, fill=black] (+0.0,-1.0) circle [radius=0.1];
\draw[draw=black, fill=black] (+1.0,-1.0) circle [radius=0.1];

\draw[draw=red, dashed, line width=0.5mm] (0,1) to (0,-1);

\node at (0,2) {weak dP6};
\node at (0,1.5) {$(2,6,m)$};
\node at (0,-1.5) {$\rightarrow\downarrow$};
\node at (0,-2.0) {\bf\large c.};
\end{tikzpicture}
&
\begin{tikzpicture}[scale=1]

\draw[draw=black, fill=black!10, line width=0.5mm] (-1.0,+1.0) to (+1.0,-1.0) 
                                                               to (-1.0,-1.0) 
                                                               to (-1.0,+1.0); 

\draw[draw=black, fill=black] (-1.0,+1.0) circle [radius=0.1];
\draw[draw=black, fill=black] (+0.0,+1.0) circle [radius=0.1];
\draw[draw=black, fill=black] (+1.0,+1.0) circle [radius=0.1];
\draw[draw=black, fill=black] (-1.0,+0.0) circle [radius=0.1];
\draw[draw=black, fill=black] (+0.0,+0.0) circle [radius=0.1];
\draw[draw=black, fill=black] (+1.0,+0.0) circle [radius=0.1];
\draw[draw=black, fill=black] (-1.0,-1.0) circle [radius=0.1];
\draw[draw=black, fill=black] (+0.0,-1.0) circle [radius=0.1];
\draw[draw=black, fill=black] (+1.0,-1.0) circle [radius=0.1];

\node[red] at (-0.5,-0.5) {id};

\node at (0,2) {Veronese};
\node at (0,1.5) {$(\infty,4,4)$};
\node at (0,-1.5) {$\rightarrow\downarrow\searrow$};
\node at (0,-2.0) {\bf\large d.};
\end{tikzpicture}
\\[2mm]
\begin{tikzpicture}[scale=1]

\draw[draw=black, fill=black!10, line width=0.5mm] (-1.0,+0.0) to (+0.0,+1.0) 
                                                               to (+1.0,+0.0) 
                                                               to (+0.0,-1.0)
                                                               to (-1.0,+0.0); 

\draw[draw=black, fill=black] (-1.0,+1.0) circle [radius=0.1];
\draw[draw=black, fill=black] (+0.0,+1.0) circle [radius=0.1];
\draw[draw=black, fill=black] (+1.0,+1.0) circle [radius=0.1];
\draw[draw=black, fill=black] (-1.0,+0.0) circle [radius=0.1];
\draw[draw=black, fill=black] (+0.0,+0.0) circle [radius=0.1];
\draw[draw=black, fill=black] (+1.0,+0.0) circle [radius=0.1];
\draw[draw=black, fill=black] (-1.0,-1.0) circle [radius=0.1];
\draw[draw=black, fill=black] (+0.0,-1.0) circle [radius=0.1];
\draw[draw=black, fill=black] (+1.0,-1.0) circle [radius=0.1];

\draw[draw=red, dashed, line width=0.5mm, ->] (0,0.3) arc (90:360+80:0.3) node[red, above] {\tiny $180^\circ$};

\node at (0,2) {ring cyclide};
\node at (0,1.5) {$(4,4,3)$};
\node at (0,-1.5) {$\rightarrow\downarrow\swarrow\searrow$};
\node at (0,-2.0) {\bf\large e.};
\end{tikzpicture}
&
\begin{tikzpicture}[scale=1]

\draw[draw=black, fill=black!10, line width=0.5mm] (-1.0,+0.0) to (+0.0,+1.0) 
                                                               to (+1.0,+0.0) 
                                                               to (+0.0,-1.0)
                                                               to (-1.0,+0.0); 

\draw[draw=black, fill=black] (-1.0,+1.0) circle [radius=0.1];
\draw[draw=black, fill=black] (+0.0,+1.0) circle [radius=0.1];
\draw[draw=black, fill=black] (+1.0,+1.0) circle [radius=0.1];
\draw[draw=black, fill=black] (-1.0,+0.0) circle [radius=0.1];
\draw[draw=black, fill=black] (+0.0,+0.0) circle [radius=0.1];
\draw[draw=black, fill=black] (+1.0,+0.0) circle [radius=0.1];
\draw[draw=black, fill=black] (-1.0,-1.0) circle [radius=0.1];
\draw[draw=black, fill=black] (+0.0,-1.0) circle [radius=0.1];
\draw[draw=black, fill=black] (+1.0,-1.0) circle [radius=0.1];

\draw[draw=red, dashed, line width=0.5mm] (0,1) to (0,-1);

\node at (0,2) {spindle cyclide};
\node at (0,1.5) {$(2,4,3)$};
\node at (0,-1.5) {$\rightarrow\downarrow$};
\node at (0,-2.0) {\bf\large f.};
\end{tikzpicture}
&
\begin{tikzpicture}[scale=1]

\draw[draw=black, fill=black!10, line width=0.5mm] (-1.0,-1.0) to (+0.0,+1.0) 
                                                               to (+1.0,-1.0)                                                                
                                                               to (-1.0,-1.0); 

\draw[draw=black, fill=black] (-1.0,+1.0) circle [radius=0.1];
\draw[draw=black, fill=black] (+0.0,+1.0) circle [radius=0.1];
\draw[draw=black, fill=black] (+1.0,+1.0) circle [radius=0.1];
\draw[draw=black, fill=black] (-1.0,+0.0) circle [radius=0.1];
\draw[draw=black, fill=black] (+0.0,+0.0) circle [radius=0.1];
\draw[draw=black, fill=black] (+1.0,+0.0) circle [radius=0.1];
\draw[draw=black, fill=black] (-1.0,-1.0) circle [radius=0.1];
\draw[draw=black, fill=black] (+0.0,-1.0) circle [radius=0.1];
\draw[draw=black, fill=black] (+1.0,-1.0) circle [radius=0.1];

\draw[draw=red, dashed, line width=0.5mm] (0,1) to (0,-1);

\node at (0,2) {horn cyclide};
\node at (0,1.5) {$(2,4,3)$};
\node at (0,-1.5) {$\rightarrow\downarrow$};
\node at (0,-2.0) {\bf\large g.};
\end{tikzpicture}
&
\begin{tikzpicture}[scale=1]

\draw[draw=black, fill=black!10, line width=0.5mm] (-1.0,-1.0) to (-1.0,+0.0) 
                                                               to (+0.0,+0.0)
                                                               to (+0.0,-1.0)
                                                               to (-1.0,-1.0); 

\draw[draw=black, fill=black] (-1.0,+1.0) circle [radius=0.1];
\draw[draw=black, fill=black] (+0.0,+1.0) circle [radius=0.1];
\draw[draw=black, fill=black] (+1.0,+1.0) circle [radius=0.1];
\draw[draw=black, fill=black] (-1.0,+0.0) circle [radius=0.1];
\draw[draw=black, fill=black] (+0.0,+0.0) circle [radius=0.1];
\draw[draw=black, fill=black] (+1.0,+0.0) circle [radius=0.1];
\draw[draw=black, fill=black] (-1.0,-1.0) circle [radius=0.1];
\draw[draw=black, fill=black] (+0.0,-1.0) circle [radius=0.1];
\draw[draw=black, fill=black] (+1.0,-1.0) circle [radius=0.1];

\draw[draw=red, dashed, line width=0.5mm] (-1,-1) to (1,1);

\node at (0,2) {2-sphere};
\node at (0,1.5) {$(\infty,2,2)$};
\node at (0,-1.5) {$\swarrow\searrow$};
\node at (0,-2.0) {\bf\large h.};
\end{tikzpicture}
\end{tabular}
\end{table}

\begin{table}[!ht]
\caption{See \COR{L} and the proof of \PRP{L}.}
\centering
\label{tab:L2}
\begin{tabular}{@{}c@{\hspace{1.3cm}}c@{\hspace{1.3cm}}c@{\hspace{1.3cm}}c@{}}
\begin{tikzpicture}[scale=1]

\draw[draw=black, fill=black!10, line width=0.5mm] (-1.0,+1.0) to (+1.0,+1.0) to (+1.0,-1.0) to (-1.0,-1.0) to (-1.0,+1.0);

\draw[draw=black, fill=black] (-1.0,+1.0) circle [radius=0.1];
\draw[draw=black, fill=black] (+0.0,+1.0) circle [radius=0.1];
\draw[draw=black, fill=black] (+1.0,+1.0) circle [radius=0.1];
\draw[draw=black, fill=black] (-1.0,+0.0) circle [radius=0.1];
\draw[draw=black, fill=black] (+0.0,+0.0) circle [radius=0.1];
\draw[draw=black, fill=black] (+1.0,+0.0) circle [radius=0.1];
\draw[draw=black, fill=black] (-1.0,-1.0) circle [radius=0.1];
\draw[draw=black, fill=black] (+0.0,-1.0) circle [radius=0.1];
\draw[draw=black, fill=black] (+1.0,-1.0) circle [radius=0.1];

\draw[draw=red, dashed, line width=0.5mm] (0,-1) to (0,1);

\node at (0,2) {dS};
\node at (0,1.5) {$(2,8,n)$};
\node at (0,-1.5) {$\rightarrow\downarrow$};
\node at (0,-2.0) {\bf\large a.};
\end{tikzpicture} 
&
\begin{tikzpicture}[scale=1]

\draw[draw=black, fill=black!10, line width=0.5mm] (-1.0,+1.0) to (+1.0,+1.0) to (+1.0,-1.0) to (-1.0,-1.0) to (-1.0,+1.0);

\draw[draw=black, fill=black] (-1.0,+1.0) circle [radius=0.1];
\draw[draw=black, fill=black] (+0.0,+1.0) circle [radius=0.1];
\draw[draw=black, fill=black] (+1.0,+1.0) circle [radius=0.1];
\draw[draw=black, fill=black] (-1.0,+0.0) circle [radius=0.1];
\draw[draw=black, fill=black] (+0.0,+0.0) circle [radius=0.1];
\draw[draw=black, fill=black] (+1.0,+0.0) circle [radius=0.1];
\draw[draw=black, fill=black] (-1.0,-1.0) circle [radius=0.1];
\draw[draw=black, fill=black] (+0.0,-1.0) circle [radius=0.1];
\draw[draw=black, fill=black] (+1.0,-1.0) circle [radius=0.1];

\draw[draw=red, dashed, line width=0.5mm, ->] (0,0.3) arc (90:360+80:0.3) node[red, above] {\small $180^\circ$};

\node at (0,2) {dS};
\node at (0,1.5) {$(2,8,n)$};
\node at (0,-1.5) {$\rightarrow\downarrow$};
\node at (0,-2.0) {\bf\large b.};
\end{tikzpicture} 
&
\begin{tikzpicture}[scale=1]

\draw[draw=black, fill=black!10, line width=0.5mm] (-1.0,+1.0) to (+1.0,-1.0) 
                                                               to (-1.0,-1.0) 
                                                               to (-1.0,+1.0); 

\draw[draw=black, fill=black] (-1.0,+1.0) circle [radius=0.1];
\draw[draw=black, fill=black] (+0.0,+1.0) circle [radius=0.1];
\draw[draw=black, fill=black] (+1.0,+1.0) circle [radius=0.1];
\draw[draw=black, fill=black] (-1.0,+0.0) circle [radius=0.1];
\draw[draw=black, fill=black] (+0.0,+0.0) circle [radius=0.1];
\draw[draw=black, fill=black] (+1.0,+0.0) circle [radius=0.1];
\draw[draw=black, fill=black] (-1.0,-1.0) circle [radius=0.1];
\draw[draw=black, fill=black] (+0.0,-1.0) circle [radius=0.1];
\draw[draw=black, fill=black] (+1.0,-1.0) circle [radius=0.1];

\draw[draw=red, dashed, line width=0.5mm] (-1,-1) to (1,1);

\node at (0,2) {Veronese};
\node at (0,1.5) {$(\infty,4,4)$};
\node at (0,-1.5) {$\searrow$};
\node at (0,-2.0) {\bf\large c.};
\end{tikzpicture}
&
\begin{tikzpicture}[scale=1]

\draw[draw=black, fill=black!10, line width=0.5mm] (-1.0,-1.0) to (-1.0,+0.0) 
                                                               to (+1.0,+1.0) 
                                                               to (+0.0,-1.0)
                                                               to (-1.0,-1.0); 

\draw[draw=black, fill=black] (-1.0,+1.0) circle [radius=0.1];
\draw[draw=black, fill=black] (+0.0,+1.0) circle [radius=0.1];
\draw[draw=black, fill=black] (+1.0,+1.0) circle [radius=0.1];
\draw[draw=black, fill=black] (-1.0,+0.0) circle [radius=0.1];
\draw[draw=black, fill=black] (+0.0,+0.0) circle [radius=0.1];
\draw[draw=black, fill=black] (+1.0,+0.0) circle [radius=0.1];
\draw[draw=black, fill=black] (-1.0,-1.0) circle [radius=0.1];
\draw[draw=black, fill=black] (+0.0,-1.0) circle [radius=0.1];
\draw[draw=black, fill=black] (+1.0,-1.0) circle [radius=0.1];

\draw[draw=red, dashed, line width=0.5mm] (-1,-1) to (1,1);

\node at (0,2) {}; 
\node at (0,1.5) {$(1,4,3)$};
\node at (0,-1.5) {$\swarrow$};
\node at (0,-2.0) {\bf\large d.};
\end{tikzpicture}
\end{tabular}
\end{table}

Before we prove \PRP{L} we state in \LEM{invo-P1P1} and \LEM{invo-T2}
the known classification of real structures of $\P^1$, $\P^1\times\P^1$, $\P^2$ and $\T^2$.
We include proofs in case we could not find a suitable reference.
\THM{circle} collects results from \cite{nls-fam-circles}
that we need for \PRP{L}, \LEM{I2} and \PRP{blowup}.

It will follow from \PRP{blowup} in \SEC{blowup} that \PRP{L} also holds with
the following hypothesis:
{\it``If $X\subseteq\S^n$ is a toric celestial surface, then\ldots''}.

\begin{notation}
\label{ntn:S1S1}
We consider the following normal forms for real structures:
\begin{gather*}
\sigma_+\c\P^1\to\P^1,\quad (x:y)\mapsto (\overline{x}:\overline{y}),
\qquad
\sigma_-\c\P^1\to\P^1,\quad (x:y)\mapsto (-\overline{y}:\overline{x}),
\\[2mm]
\sigma_{s}\c\P^1\times\P^1\to\P^1\times\P^1,\quad (s:t;u:w)\mapsto (\overline{u}:\overline{w};\overline{s}:\overline{t}).
\end{gather*}
Notice that $\P^1\times\P^1$ with real structure $\sigma_+\times \sigma_+$
or $\sigma_s$ is isomorphic to $\S^1\times\S^1$ and $\S^2$, \resp.
\END
\end{notation}

\newpage
\begin{lemma}
\textrm{\bf(real structures for $\P^1$, $\P^1\times\P^1$ and $\P^2$)} 
\label{lem:invo-P1P1}
\begin{Mlist}
\item[\bf a)]
If $\sigma\c\P^1\to\P^1$ is an antiholomorphic involution,
then
there exists $\gamma\in\aut(\P^1_\C)$ \st 
$(\gamma^{-1}\circ\sigma\circ\gamma)$ is equal to either $\sigma_+$ or $\sigma_-$.

\item[\bf b)]
If $\sigma\c\P^1\times\P^1\to\P^1\times\P^1$ 
is an antiholomorphic involution,
then 
there exists $\gamma\in\aut(\P^1_\C\times\P^1_\C)$ \st 
$(\gamma^{-1}\circ\sigma\circ\gamma)$ is equal to either
\\
$\sigma_+\times\sigma_+$,~~
$\sigma_+\times\sigma_-$,~~
$\sigma_-\times\sigma_-$ ~~or~~  $\sigma_{s}$.

\item[\bf c)]
If $\sigma\c\P^2\to\P^2$ is an antiholomorphic involution,
then there exists $\gamma\in\aut(\P^2_\C)$ \st 
$(\gamma^{-1}\circ\sigma\circ\gamma)$ is equal to
$\sigma_0:(s:t:u)\mapsto (\overline{s}:\overline{t}:\overline{u})$.

\end{Mlist}
\end{lemma}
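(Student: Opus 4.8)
The plan is to treat the three parts uniformly. Fix the standard coordinatisations and write an antiholomorphic involution $\sigma$ as $\sigma=\varphi\circ c$, where $c$ is coordinatewise complex conjugation (so $c$ equals $\sigma_+$, $\sigma_+\times\sigma_+$ and $\sigma_0$, respectively) and $\varphi:=\sigma\circ c$ is a biholomorphic automorphism. Then $\sigma^2=\mathrm{id}$ is equivalent to the relation $c\,\varphi\,c^{-1}=\varphi^{-1}$ in $\aut(\P^\bullet_\C)$, and replacing $\sigma$ by $\gamma^{-1}\sigma\gamma$ replaces $\varphi$ by the twisted conjugate $\gamma^{-1}\varphi\,(c\gamma c^{-1})$; so it suffices to normalise $\varphi$ by such conjugations. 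Equivalently, and sometimes more cleanly, for $\P^n=\P(V)$ one lifts $\sigma$ to a $\C$-antilinear map $\tilde\sigma\c V\to V$: then $\tilde\sigma^2=\lambda\cdot\mathrm{id}_V$ with $\lambda\in\R^{\ast}$ (antilinearity forces $\lambda\in\R$, invertibility $\lambda\neq 0$), rescaling $\tilde\sigma$ multiplies $\lambda$ by a positive real, and when $\lambda>0$ one normalises $\tilde\sigma^2=\mathrm{id}_V$, so that $V=V^{\tilde\sigma}\otimes_\R\C$ and $\tilde\sigma$ becomes $c$ in any $\R$-basis of the real form $V^{\tilde\sigma}$.

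For part (a) I would distinguish whether $\sigma$ has a fixed point. If $\sigma(p)=p$, conjugate so that $p=(1:0)$; in the affine chart the map is $z\mapsto\alpha\overline z+\beta$, and $\sigma^2=\mathrm{id}$ gives $|\alpha|=1$ and $\alpha\overline\beta+\beta=0$. Conjugating by a rotation $z\mapsto e^{i\theta}z$ with $e^{2i\theta}=\alpha$ reduces to $\alpha=1$; the residual constant is then purely imaginary and is removed by a further translation, so $\sigma$ becomes $z\mapsto\overline z$, i.e.\ $\sigma_+$. If $\sigma$ is fixed-point free, choose any $p$ and conjugate so that $p=0$ and $\sigma(p)=\infty$ (hence $\sigma(\infty)=0$); the map becomes $z\mapsto\mu/\overline z$ with $\mu\in\C^{\ast}$, where $\sigma^2=\mathrm{id}$ forces $\mu\in\R^{\ast}$ and the absence of fixed points forces $\mu<0$; conjugating by $z\mapsto\sqrt{-\mu}\,z$ yields $z\mapsto-1/\overline z$, i.e.\ $\sigma_-$. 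Part (c) then follows from the linear-algebra remark above: $\dim_\C V=3$ is odd, so $\tilde\sigma^2=-\mathrm{id}_V$ cannot hold (it would make $V$ a module over the quaternions, forcing $\dim_\C V$ even); hence $\lambda>0$ and $\sigma$ is conjugate to $\sigma_0$.

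For part (b) I would split according to whether $\varphi=\sigma\circ c$ lies in $\aut(\P^1_\C)\times\aut(\P^1_\C)$ or in the nontrivial coset of ruling-swapping automorphisms of $\P^1_\C\times\P^1_\C$. In the first case $\varphi=(A,B)$ and $c\varphi c^{-1}=\varphi^{-1}$ reads $A\overline A=B\overline B=\mathrm{id}$, so $\sigma$ is the product of the antiholomorphic involutions $p\mapsto A\overline p$ and $q\mapsto B\overline q$ of $\P^1$; by part (a) each is conjugate to $\sigma_+$ or $\sigma_-$, giving $\sigma_+\times\sigma_+$, $\sigma_+\times\sigma_-$ or $\sigma_-\times\sigma_-$ (a possible $\sigma_-\times\sigma_+$ being carried to $\sigma_+\times\sigma_-$ by the ruling swap). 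In the second case write $\varphi(p;q)=(Aq;Bp)$; then $c\varphi c^{-1}=\varphi^{-1}$ forces $B=\overline A^{-1}$, so $\sigma(p;q)=(A\overline q\,;\,\overline A^{-1}\overline p)$, and conjugating by $\gamma=(\mathrm{id}_{\P^1},\overline A^{-1})\in\aut(\P^1_\C)\times\aut(\P^1_\C)$ brings this to $(p;q)\mapsto(\overline q\,;\,\overline p)$, i.e.\ $\sigma_s$.

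The arguments are classical and I expect only two points to need care. The first is the parity obstruction in part (c): ruling out $\tilde\sigma^2=-\mathrm{id}_V$ is precisely the reason $\P^2$ has a unique real form whereas $\P^1$ has two, and it should be phrased carefully (via quaternionic modules, or via $\det(\tilde\sigma^2)$ together with a real-dimension count). The second is the ruling-swapping case of part (b): one must track the action of $c$ on the swap factor of $\aut(\P^1_\C\times\P^1_\C)$ and check that the normalising conjugation $\gamma$ can be taken inside $\aut(\P^1_\C)\times\aut(\P^1_\C)$ — i.e.\ not itself swapping rulings — so that $\gamma$ is an admissible element of $\aut(\P^1_\C\times\P^1_\C)$ as the statement requires. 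Everything else reduces to the elementary normalisations of part (a); where convenient I would also cite \citep[Section~I.1]{sil1}.
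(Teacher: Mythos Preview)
Your argument is correct and self-contained. It differs genuinely from the paper's proof, which is geometric rather than algebraic: the paper establishes a ``Claim~1'' that the anticanonical model of $X$ is compatible with the standard conjugation $\sigma_0$ on the ambient projective space, and then, for (a), identifies the real structure on $\P^1$ with the signature of the resulting real conic in $\P^2$; for (c), it uses the odd degree of the anticanonical image of $\P^2$ in $\P^9$ to produce real points, constructs two real lines, and passes to the linear subsystem $|\hh|$ to conclude. Your approach instead lifts $\sigma$ to an antilinear map $\tilde\sigma$ on $V$ and uses the sign of $\tilde\sigma^2$ together with the quaternionic obstruction in odd dimension; for (a) you do an explicit affine normalisation. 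Part (b) is handled essentially the same way in both proofs: split on whether the rulings are swapped, reduce the non-swapping case to (a), and in the swapping case show that the induced antiholomorphic isomorphism $\P^1_\C\to\P^1_\C$ can be normalised by a one-sided conjugation. Your route is more elementary and portable (it works verbatim for any $\P^n$), while the paper's route ties the classification to an intrinsic projective-geometric invariant (the signature of the anticanonical image), which fits its later use of quadratic forms; neither approach dominates the other.
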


\begin{proof}
{\it Claim 1.
If $X$ is a variety with antiholomorphic involution $\sigma\c X\to X$
and very ample anticanonical class $-\kk$, 
then the following diagram commutes 
\[
\begin{tikzcd}
X \arrow[d,"\sigma"']\arrow[r, "\varphi_{-\kk}"] & Y\subset \P^{h^0(-\kk)-1}\arrow[d,"\sigma_0"]\\
X \arrow[r, "\varphi_{-\kk}"] & Y\subset \P^{h^0(-\kk)-1}
\end{tikzcd}
\]
where $\sigma_0: (x_0:\ldots:x_n) \mapsto (\overline{x_0}:\ldots:\overline{x_n})$ 
and $Y$ is the image of $X$ under the 
birational morphism $\varphi_{-\kk}$ associated to $-\kk$.}
This claim is a straightforward consequence of \citep[I.(1.2) and I.(1.4)]{sil1}.

a)
We apply claim~1 with $X=\P^1$ 
so that $Y\subset\P^2$ is a real conic. 
We know that $Y$ has signature either $(3,0)$ or $(2,1)$. 
Thus there are, up to inner automorphism, two 
antiholomorphic involutions of $\P^1$. Moreover we have that
$|\set{p\in\P^1}{\sigma(p)=p}|\in\{0,\infty\}$.
This concludes the proof, since the $\sigma$ must be inner automorphic 
to either $\sigma_+$ or $\sigma_-$.

b)
If $\sigma$ does not flip the components of $\P^1\times\P^1$,
then it follows from a) that $\sigma$ is inner automorphic to 
either $\sigma_+\times\sigma_+$, $\sigma_+\times\sigma_-$ or $\sigma_-\times\sigma_-$.
Now suppose that $\sigma$ flips the components of $\P^1\times\P^1$.
Let $\pi_1$ and $\pi_2$ be the complex first and second projections of~$\P^1\times\P^1$ 
to~$\P^1_\C$, \resp.
The composition $\pi_2\circ \sigma\circ \pi_1^{-1}$ defines an antiholomorphic
isomorphism $\tau\c\P^1_\C\to\P^1_\C$.
If $\sigma$ and $\sigma'$ are inner automorphic, 
then there exist complex $\alpha,\beta\in \aut(\P^1_\C)$ \st 
$
\pi_2\circ \sigma\circ \pi_1^{-1}$
is equal to
$
\beta\circ\pi_2\circ \sigma'\circ \pi_1^{-1}\circ \alpha
$.
Conversely, an antiholomorphic isomorphism $\tau\c\P^1_\C\to\P^1_\C$
defines an antiholomorphic involution $(p;q)\mapsto(\tau^{-1}(q);\tau(p))$
that flips the components of~$\P^1\times\P^1$.
There exists $\alpha,\beta\in \aut(\P^1_\C)$ \st $\beta\circ\tau\circ\alpha$ is 
defined by $(s:t)\mapsto (\overline{s}:\overline{t})$.
We conclude that $\sigma$ is unique up to inner automorphisms and thus \Wlog inner automorphic to 
the real structure~$\sigma_s$.

c)
We apply claim~1 with $X=\P^2$
so that $Y\subset\P^9$ is a surface of degree nine. 
Since the degree is odd, we obtain infinitely many real points on $Y$ 
and thus also infinitely many real points on $\P^2$.
We know that $-\kk=3\hh$, where $-\kk$ is the anticanonical class
and $\hh$ is the divisor class of lines in $\P^2$.
We can construct two different real lines in $\P^2$, since 
a line through two real points is real.
The linear subseries of $|-\kk|$ that consists of all 
cubics that contain these real two lines, 
is $|-\kk-2\hh|=|\hh|$.
Notice that choosing a real subsystem of $|-\kk|$ is geometrically 
a real linear projection of $Y$.
It follows that the map $\varphi_\hh\c\P^2\to\P^2$
associated to $\hh$ is real \st $\sigma_0\circ\varphi_\hh=\varphi_\hh\circ\sigma$.
We conclude that $\sigma$ is inner automorphic to $\sigma_0$ as was claimed.
\end{proof}

\begin{lemma}
\textrm{\bf(real structures for $\T^2$)}
\label{lem:invo-T2}
\\
If $\sigma\c\T^2\to\T^2$ 
is a toric antiholomorphic involution,
then 
there exists $\gamma\in\aut(\T^2_\C)$ \st 
$(\gamma^{-1}\circ\sigma\circ\gamma)$ is equal to either
one of the following:
\[
\begin{array}{ll}
\sigma_{0}: (s,u) \mapsto (\overline{s},\overline{u}),   
& 
\sigma_{1}: (s,u) \mapsto (\frac{1}{\overline{s}},\overline{u}),
\\
\sigma_{2}: (s,u) \mapsto (\frac{1}{\overline{s}},\frac{1}{\overline{u}}), 
& 
\sigma_{3}: (s,u) \mapsto (\overline{u},\overline{s}),
\end{array}
\]
and $\sigma_i\c\T^2\to\T^2$ induces, up to unimodular equivalence, 
the following unimodular involution $\sigma_i\c\Z^2\to\Z^2$:
\[
\begin{array}{ll}
\sigma_{0}: (x,y)\mapsto(x,y),   
& 
\sigma_{1}: (x,y)\mapsto(-x,y),
\\
\sigma_{2}: (x,y)\mapsto(-x,-y), 
& 
\sigma_{3}: (x,y)\mapsto(y,x).
\end{array}
\]
The corresponding real points 
$\Gamma_{\sigma_i}:=\set{(s,u)\in\T^2}{\sigma_i(s,u)=(s,u)}$ are:
\[
\begin{array}{l@{}l}
\Gamma_{\sigma_0}=\{(s,u)\in \T^2 ~|~ s=\overline{s},~ u=\overline{u}   & \}\cong (\R^\star)^2     ,    \\
\Gamma_{\sigma_1}=\{(s,u)\in \T^2 ~|~ s\overline{s}=1,~ u=\overline{u}  & \}\cong S^1\times\R^\star,    \\
\Gamma_{\sigma_2}=\{(s,u)\in \T^2 ~|~ s\overline{s}=1,~ u\overline{u}=1 & \}\cong S^1\times S^1    ,    \\
\Gamma_{\sigma_3}=\{(s,u)\in \T^2 ~|~ s=\overline{u}                    & \}\cong \R^2\setminus\{(0,0)\}.
\end{array}
\]
\end{lemma}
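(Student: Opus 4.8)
The plan is to straighten out $\sigma$ in two stages --- first its monomial part, then its translational part --- after which both the induced lattice involution and the fixed loci fall out by inspection. Since $\sigma$ is a toric, hence biregular, antiholomorphic involution of $\T^2$, the composite $\tau:=\sigma\circ\sigma_0$, with $\sigma_0\c(s,u)\mapsto(\overline s,\overline u)$, is a biregular automorphism of $\T^2$; by the standard fact that $\aut(\T^2_\C)=\operatorname{GL}(2,\Z)\ltimes\T^2$ there are $M\in\operatorname{GL}(2,\Z)$ and $c=(c_1,c_2)\in\T^2$ with $\tau=T_c\circ\phi_M$, where $\phi_M$ is the monomial automorphism with exponent matrix $M$ and $T_c$ denotes multiplication by $c$. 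Thus $\sigma=T_c\circ\phi_M\circ\sigma_0$, and using that $\sigma_0$ commutes with every $\phi_M$, that $\sigma_0\circ T_b=T_{\overline b}\circ\sigma_0$, and that $\phi_M\circ T_b=T_{\phi_M(b)}\circ\phi_M$, the relation $\sigma^2=\mathrm{id}$ unwinds to
\[
M^2=I
\qquad\text{and}\qquad
c\cdot\phi_M(\overline c)=(1,1).
\]
Moreover $\sigma$ induces on the exponent lattice $\Z^2$ (the transpose of) the linear involution $M$; conjugating $\sigma$ by a translation $T_d$ leaves $M$ fixed and replaces $c$ by $d^{-1}\,c\,\phi_M(\overline d)$, while conjugating by a monomial map $\phi_N$ replaces $M$ by $N^{-1}MN$ and $c$ by $\phi_{N^{-1}}(c)$. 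So everything reduces to normalizing the pair $(M,c)$ under these moves.

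For the monomial part, an involution $M\in\operatorname{GL}(2,\Z)$ is diagonalizable over $\Q$ (since $x^2-1$ has distinct roots) with eigenvalues in $\{\pm1\}$: if both are $+1$ then $M=I$, if both are $-1$ then $M=-I$, and if the eigenvalues are $1$ and $-1$ one puts the saturated rank-one fixed sublattice in standard position and reduces the remaining off-diagonal entry modulo $2$, finding exactly two further classes, represented by $\operatorname{diag}(1,-1)$ and the transposition $(x,y)\mapsto(y,x)$. Up to a coordinate swap these four classes realize precisely the four lattice involutions $\sigma_0,\sigma_1,\sigma_2,\sigma_3$ of $\Z^2$ of the statement, and I would take $M\in\{I,\ \operatorname{diag}(-1,1),\ -I,\ (x,y)\mapsto(y,x)\}$ as the respective monomial normal forms.

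It then remains, for each such $M$, to normalize $c$; this is the computation of the pointed set $H^1(\langle\sigma\rangle,\T^2_\C)$ of solutions $c$ of $c\cdot\phi_M(\overline c)=(1,1)$ modulo coboundaries $d\mapsto d^{-1}\,\phi_M(\overline d)$, further cut down by the residual $\operatorname{GL}(2,\Z)$-action commuting with $M$. For $M=I$ the condition says $|c_1|=|c_2|=1$ and each such $c$ is a coboundary, so $\sigma\sim\sigma_0$; for the transposition a one-line calculation makes every solution a coboundary, so $\sigma\sim\sigma_3$. For $M=\operatorname{diag}(-1,1)$ the condition forces $c_1\in\R^\star$ and $|c_2|=1$, while the coboundaries scale $c_1$ by an arbitrary positive real and multiply $c_2$ by an arbitrary unimodular number, so $c$ reduces to $(\pm1,1)$; similarly for $M=-I$ one reaches $c\in\{\pm1\}^2$, which the residual monomial action collapses either to $(1,1)$ or to a form containing a $-1$. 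In each of the ``$-1$'' cases the fixed locus in $\T^2$ is empty (e.g.\ $s=-1/\overline s$ is unsolvable), so these do not arise when $\sigma$ has a fixed point --- in particular in the applications, where $\sigma$ comes from a real structure on a celestial surface --- leaving $\sigma_1$ and $\sigma_2$. The descriptions of $\Gamma_{\sigma_i}$ are then immediate from the formulas: $s,u\in\R^\star$ for $\sigma_0$; $|s|=1$ and $u\in\R^\star$ for $\sigma_1$; $|s|=|u|=1$ for $\sigma_2$; and $u=\overline s$, a copy of $\C^\star\cong\R^2\setminus\{(0,0)\}$, for $\sigma_3$.

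The step I expect to be the main obstacle is the last one: pinning down the constant $c$, where both the case distinction and the (mild) use of the existence of a fixed point come in. The monomial classification and the fixed-locus computations are routine.
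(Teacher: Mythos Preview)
Your argument is correct, but you have worked harder than necessary because you missed the shortcut the paper uses: a \emph{toric} antiholomorphic involution is one that respects the torus structure (recall $\T^1:=(\C^*,1)$ is a pointed object), so $\sigma(1,1)=(1,1)$ automatically. This kills the translational part $c$ at the outset: with your notation, $\sigma=T_c\circ\phi_M\circ\sigma_0$ evaluated at $(1,1)$ gives $c=(1,1)$ immediately. The paper's proof is therefore just your monomial classification (the four $\operatorname{GL}(2,\Z)$-conjugacy classes of involutions) followed by the direct fixed-locus computation, with no cohomology step at all.

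What your approach buys is a classification under the weaker hypothesis that $\sigma$ is merely a biregular antiholomorphic involution of the variety $\T^2$, not necessarily fixing the identity. You then genuinely need the $H^1$ computation and, as you correctly flag, an extra hypothesis (existence of a real point) to discard the twisted forms with $c$ containing a $-1$. That is a legitimate and more general result, but it is not the lemma as stated: under the paper's reading of ``toric'' those twisted forms never appear, so your appeal to ``in the applications'' is unnecessary. If you keep your write-up, I would add one sentence at the start observing that $\sigma(1,1)=(1,1)$ forces $c=(1,1)$, and delete the entire second stage.
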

 
\begin{proof}
Since $\sigma\c\T^2\to\T^2$ extends to an antiholomorphic involution of an algebraic surface
we may assume that $\sigma$ is defined by 
$(s,u)\mapsto\overline{f(s,u)}$ where $f$ is 
some bivariate rational function in $\C(s,u)$.
From $\sigma(1,1)=(1,1)$ it follows that $f(s,u)=(s^au^b,s^cu^d)$
with $a,b,c,d\in\Z$.
From $(\sigma\circ\sigma)(s,u)=(s,u)$ it follows that 
$ad-bc=\pm1$ and thus the induced unimodular involution
$(x,y)\mapsto (ax+by,cx+dy)$
is unimodular equivalent to $\sigma_i\c\Z^2\to\Z^2$ for some $i\in\{0,1,2,3\}$ as asserted.
For $\sigma_1$ we find that $f(s,u)=(\frac{1}{s},u)$,
and thus $\overline{f(s,u)}=(s,u)$ \Iff $s\overline{s}=1$ and $u=\overline{u}$ 
so that $\Gamma_{\sigma_1}\cong S^1\times\R^\star$.
The proofs for $\Gamma_{\sigma_0}$, $\Gamma_{\sigma_2}$ and $\Gamma_{\sigma_3}$ are similar.
\end{proof}

For convenience of the reader, \THM{circle} below extracts result from
\citep[Theorem~1, Theorem~3, Theorem~4, Corollary~5, Lemma~1 and Lemma~3]{nls-fam-circles} 
that are needed
for \PRP{L}, \LEM{I2} and \PRP{blowup}.

\begin{notation}
\label{ntn:N}
The \df{Neron-Severi lattice} of a surface~$X\subset \P^{n+1}$ 
consists of a unimodular lattice $N(X)$ that is 
defined by the divisor classes on the smooth model~$\X$ up to numerical equivalence.
In this article, $N(X)$ will be a sublattice of 
$\bas{\l_0,~\l_1,~\p_1,~\p_2,~\p_3,~\p_4}_\Z$,
where
$\l_0\cdot\l_1=1$, $\p_i^2=-1$ and $\l_0^2=\l_1^2=\l_0\cdot\p_i=\l_1\cdot\p_i=0$ for $1\leq i\leq 4$.
The real structure $\sigma$ induces a unimodular involution $\sigma\c N(X)\to N(X)$
\st $\sigma(\l_0)=\l_0$, $\sigma(\l_1)=\l_1$, $\sigma(\p_1)=\p_2$
and $\sigma(\p_3)=\p_4$ (see \NTN{nota}).
The function $h^0\c N(X)\to \Z_{\geq0}$ assigns 
to a divisor class the dimension of the vector space of its associated global sections.
The two distinguished elements $\hh,\kk\in N(X)$ correspond to
the \df{class of hyperplane sections} and the \df{canonical class}, \resp.
We call a divisor class $c\in N(X)$ \df{indecomposable}
if $h^0(c)>0$ and if there do not exist nonzero $a,b\in N(X)$ such that
$c=a+b$, $h^0(a)>0$ and $h^0(b)>0$. 
The subset of \df{indecomposable $(-2)$-classes} in $N(X)$
are defined as
\[B(X):=\set{c\in N(X)}{-\kk\cdot c=0, ~c^2=-2 \text{ and } c \text{ is indecomposable}  }.\]
We use the following shorthand notation for elements in $B(X)$:
\[
b_{ij} := \l_0-\p_i-\p_j,\quad
b'_{ij} := \l_1-\p_i-\p_j,\quad
b_1 := \p_1-\p_3
\quad\text{and}\quad 
b_2 := \p_2-\p_4,
\]
and we underline the classes in $\set{b\in B(X)}{\sigma(b)=b}$.
A \df{(projected) weak dP6} is (a degree preserving linear projection of) 
an anticanonical model of $\P^1\times\P^1$ blown up in two complex points
that lie in a fiber of a projection $\P^1\times\P^1\to\P^1$.
We call $X\subset \S^3$ a \df{CH1 cyclide}, if $X(\R)\subset S^3$ 
is an inverse stereographic projection of a circular hyperboloid of 1 sheet.
\END
\end{notation}

\begin{theoremext}
\label{thm:circle}
{\bf [2019]} We use \NTN{N}.
\\
Suppose that $X$ is a celestial surface of type $\bT(X)=(\lambda,d,n)$.
\begin{Mlist}
\item[\bf a)]
If either $\lambda=\infty$, $d>4$, $|\bS(X_N)|> 2$ or $|B(X)|>3$, 
then $\bT(X)$, $\bS(X_N)$, $B(X)$ and the name of~$X$ is characterized  
by a row in the following table, where $3\leq n\leq 7$ and $4\leq m\leq 5$:
\begin{center}
\footnotesize
\setlength\extrarowheight{7pt}
\begin{tabular}{cccl}
$\bT(X)$       & $\bS(X_N)$              & $B(X)$                                                            & name                 \\\hline
$(\infty,4,4)$ & $\emptyset$             & $\emptyset$                                                       & Veronese surface     \\
$(\infty,2,2)$ & $\emptyset$             & $\emptyset$                                                       & 2-sphere             \\
$(2,8,n)$      & $\emptyset$             & $\emptyset$                                                       & (projected) dS       \\
$(3,6,m)$      & $\emptyset$             & $\emptyset$                                                       & (projected) dP6      \\
$(2,6,m)$      & $\underline{A_1}$       & $\{\underline{b_{12}}\}$                                          & (projected) weak dP6 \\
$(4,4,3)$      & $4A_1$                  & $\{b_{13},~b_{24},~b'_{14},~b'_{23}\}$                            & ring cyclide         \\            
$(2,4,3)$      & $2\underline{A_1}+2A_1$ & $\{b_{13},~b_{24},~\underline{b'_{12}},~\underline{b'_{34}}\}$    & spindle cyclide      \\                                                  
$(2,4,3)$      & $\underline{A_3}+2A_1$  & $\{b_{13},~b_{24},~\underline{b'_{12}},~b_1,~b_2\}$               & horn cyclide         \\                                              
$(3,4,3)$      & $\underline{A_1}+2A_1$  & $\{b_{13},~b_{24},~\underline{b_{12}'}\}$                         & CH1 cyclide          \\                           
\end{tabular}
\end{center}

\item[\bf b)]
If $\lambda<\infty$, then the class $\hh$ of hyperplane sections of $X$ is 
equal to the anticanonical class $-\kk$ and  \Wlog equal to either \\
$2\,\l_0+2\,\l_1$, $2\,\l_0+2\,\l_1-\p_1-\p_2$ or $2\,\l_0+2\,\l_1-\p_1-\p_2-\p_3-\p_4$.
\\
If $\lambda=\infty$, then either $\hh=-\frac{2}{3}\kk$ and $\kk^2=9$, or 
$\hh=-\frac{1}{2}\kk$ and $\kk^2=8$.

\item[\bf c)] 
If $\lambda<\infty$,
then the smooth model $\X$ is isomorphic to the blowup of $\P^1\times\P^1$
in either 0, 2 or 4 nonreal complex conjugate points.
These points may be infinitely near, but 
at most two of the non-infinitely near points lie in the same fiber of a projection from
$\P^1\times\P^1$ to $\P^1$.
The pullback into $\X$ of a fiber that contains two points is contracted to an isolated 
singularity of the linear normalization $X_N$.

\end{Mlist}
\end{theoremext}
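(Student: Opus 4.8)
The plan is to feed the coarse data of \THM{circle} into the combinatorics of lattice polygons and then run a finite case analysis organized by the real structure. Since $X$ is covered by two toric families of circles it is $\lambda$-circled with $\lambda\geq 2$, and after replacing $\S^n$ by the linear span of $X$ it is a celestial surface, so \THM{circle} applies. The generic fibre of a width-$w$ lattice projection is a curve of $\hh$-degree $w$, so a toric family of circles (conics) corresponds to a projection of width $2$; in particular, for $\lambda<\infty$ the surface is not covered by complex lines and \citep[Proposition~31]{nls1} applies, while the two $\lambda=\infty$ models are handled separately by inspection together with \LEM{invo-P1P1}. Now \THM{circle}b--c determine the smooth model and the class of hyperplane sections: for $\lambda<\infty$ one has $\hh=-\kk$ and $\X$ is $\P^1\times\P^1$ blown up at $0$, $2$, or $4$ nonreal complex conjugate points, with $\hh$ equal to $2\l_0+2\l_1$, $2\l_0+2\l_1-\p_1-\p_2$, or $2\l_0+2\l_1-\p_1-\p_2-\p_3-\p_4$ respectively; for $\lambda=\infty$ one has $\X\cong\P^2$ (with $\hh=-\tfrac{2}{3}\kk$) or $\X\cong\P^1\times\P^1$ (with $\hh=-\tfrac{1}{2}\kk$).

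Next I would make the toric structure explicit. The torus $\T^2\hookrightarrow X$ lifts to $\X$ and makes the morphism $\X\to\P^1\times\P^1$ (resp.\ $\X\to\P^2$) equivariant, so the blown-up points are torus-fixed, i.e.\ the four coordinate points of $\P^1\times\P^1$. Reading $|\hh|=|-\kk|$ as a Newton polygon then gives the lattice polygon directly: starting from the $2\times 2$ square (the polygon of $2\l_0+2\l_1$, giving \TAB{L}a), each blow-up of a torus-fixed point truncates one vertex, producing a hexagon from two opposite corners (\TAB{L}b), a pentagon from two corners on a common edge (\TAB{L}c), and the diamond or a triangle from all four corners (\TAB{L}e--g); the two $\lambda=\infty$ models give the triangle (\TAB{L}d) and the unit square (\TAB{L}h).

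The real structure is then pinned down by \LEM{invo-T2}: it induces one of the four unimodular involutions $\sigma_0,\ldots,\sigma_3$, and I would organize the whole argument as a case analysis over these. Crucially, \THM{circle}c forbids blowing up real points, so only those corners moved by $\sigma_i$ may be blown up: under $\sigma_0$ all corners are real, forcing $0$ blow-ups (the double Segre surface); under $\sigma_1$ the corners split into two conjugate pairs lying in common rulings, giving upon blowing up one or both pairs the weak dP6 and the spindle or horn cyclide; under $\sigma_2$ they split into two conjugate pairs of opposite corners, giving the dP6 and the ring cyclide; and under $\sigma_3$, which swaps the rulings, only a single pair of corners is nonreal. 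Matching each pairing to the truncated polygon fixes the symmetry axis or $180^\circ$ rotation recorded in \TAB{L}, and \citep[Proposition~31]{nls1} identifies the toric families of \emph{real} circles with the $\sigma_i$-invariant width-$2$ projections; computing these for each polygon yields exactly the arrow sets in the table, and the hypothesis $\lambda\geq 2$ becomes the requirement that at least two such projections exist.

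The delicate part---and the reason for \TAB{L2} and \COR{L}---is bookkeeping the redundancies and exclusions, and I expect this to be the main obstacle, the polygon and width computations being mechanical. First, the lattice type must be intrinsic: a given double Segre surface in $\S^n$ can present with involution $\sigma_0$, $\sigma_1$, or $\sigma_2$ according to the chosen torus embedding, and I must show (this is \COR{L}a) that all three presentations (\TAB{L}a, \TAB{L2}a, \TAB{L2}b) are conjugate under $\aut(\P^1\times\P^1)$; concretely, each extends to a factorwise structure whose factors ($s\mapsto\bar s$ or $s\mapsto 1/\bar s$) have nonempty real locus and are therefore conjugate to $\sigma_+$ by \LEM{invo-P1P1}a, so that only \TAB{L}a survives. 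The analogous identification for the Veronese surface (\TAB{L}d versus \TAB{L2}c) is \COR{L}b. Second, I must discard the configurations failing the hypothesis: the component-swapping $\sigma_3$ on the full square is the $\S^2$-structure double Segre surface, which lives on a quadric of signature $(3,6)$ rather than on $\S^n$ and instead feeds \COR{iqf}; the CH1 cyclide of \THM{circle}a is not toric, since its defining configuration (with $b'_{34}$ not effective) forces two of the four blown-up points off the torus-fixed corners; and the quartic of \TAB{L2}d attains width $2$ in a single $\sigma_3$-invariant direction, so it carries only one toric family of real circles. Verifying these conjugacies and ruling out the under-circled and wrong-signature configurations leaves exactly the eight cases of \TAB{L}.
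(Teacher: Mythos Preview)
You are not proving the statement you were given. The statement is \THM{circle}, an external result quoted from \cite{nls-fam-circles} which the paper does not prove at all (see the sentence immediately preceding \THM{circle}). Your proposal instead proves \PRP{L} together with \COR{L}: you take \THM{circle} as input in the first paragraph (``\THM{circle} applies'', ``\THM{circle}b--c determine the smooth model''), your hypothesis ``$X$ is covered by two toric families of circles'' is the hypothesis of \PRP{L}, and your conclusion ``leaves exactly the eight cases of \TAB{L}'' is its conclusion. Citing \THM{circle} in a proof of \THM{circle} would of course be circular.

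Read as an attempt at \PRP{L}, your approach differs from the paper's. The paper is polygon-first: it computes the allowed interior/boundary counts $(i,b,d)$ from \THM{circle}b (Claim~1), confines the polygon to a $3\times 3$ grid (Claim~6), enumerates all $\sigma$-invariant polygons there (Claim~5), and prunes by a no-real-lines condition (Claim~3) and the two-direction requirement (Claim~4). You are blowup-first: identify $\X$ via \THM{circle}c, argue the blown-up points are torus-fixed corners of $\P^1\times\P^1$, and read the polygon off as vertex truncations of the $2\times2$ square. This is more geometric but has a rough edge: for the horn cyclide the points $q,\overline q$ are infinitely near to $p,\overline p$, so the triangle of \TAB{L}g arises from an iterated truncation on the exceptional divisors of the first blowup, not from cutting four corners of the square simultaneously; your parenthetical ``the diamond or a triangle from all four corners (\TAB{L}e--g)'' elides this, and distinguishing \TAB{L}e, \TAB{L}f, \TAB{L}g in your scheme requires tracking which pairs of corners (opposite, adjacent, adjacent-then-infinitely-near) are blown up. The paper's enumeration handles these three cases uniformly without that bookkeeping.
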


\begin{proof}[Proof of \PRP{L} and \COR{L}.]
Let $X\subseteq\S^n$ be a $\lambda$-circled toric celestial surface of degree~$d$
that is covered by at least two toric families of circles. The lattice polygon of $X$ 
contains $i$ interior and $b$ boundary lattice points.

{\it
Claim 1: $(i,b,d)\in\{(0,4,2),(0,6,4),(1,4,4),(1,6,6),(1,8,8)\}$
and $\lambda=\infty$ if $i=0$.}
We know from \citep[Propositions~10.5.6 and 10.5.8]{toric} 
that $i$ and $b$ are equal to the sectional genus
$p_a(\hh)=\frac{1}{2}(\hh^2+\kk\cdot\hh)+1$
and anticanonical degree $-\kk\cdot \hh$, \resp~(see also \cite{sch5}).
This claim now follows from \THM{circle}b.

{\it Claim 2:
The lattice polygon of $X$ is, 
up to unimodular equivalence, 
preserved by the unimodular involution $\sigma\c\Z^2\to \Z^2$
that is defined by either 
$\sigma_0$, $\sigma_1$, $\sigma_2$ or $\sigma_3$.}
This claim follows from \LEM{invo-T2}.

{\it Claim 3: 
A boundary line segment of the lattice polygon of~$X$, 
that contains no more and no less than two lattice points, 
is not left invariant by the unimodular involution $\sigma\c\Z^2\to\Z^2$.}
Up to unimodular equivalence, we may assume that the two lattice 
points on a boundary line segment have coordinates $(0,0)$ and $(0,1)$, and that the
remaining lattice points of the polygon lie strictly on the right side of these two points.
Without loss of generality the two lattice points
correspond to the first two components $s^0 u^0$ and $s^0 u^1$ 
of a monomial parametrization $\xi\c\T^2\to X_N$
so that $\xi(0,u)=(1:u:0:\ldots:0)$ parametrizes a line in~$X_N$. 
This line is either linear equivalent 
or linearly projected to a line in~$X$.
We conclude that claim~3 holds,
since $X(\R)\subseteq S^n$ does not contain real lines.

{\it Claim 4:
If $d\neq 2$, then the lattice polygon of $X$ attains its minimal width 
along at least two directions that are left invariant
by the unimodular involution $\sigma\c\Z^2\to\Z^2$.}
Indeed, recall that such a direction corresponds to a toric family of circles.

{\it Claim 5: 
If the lattice polygon of~$X$ is contained in a $3\times 3$ grid centered at the origin,
then $\bL(X)$ together with $\bT(X)$ and the name of~$X$ is in \TAB{L} or \TAB{L2}.}
For each real structure $\sigma$ and pair $(i,b)$
listed at claims~1 and~2,
we list up to equivalence all lattice polygons in the $3\times 3$ grid that are 
left invariant by $\sigma$ and
that have $i$ interior and $b$ boundary lattice points.
Of these polygons we discard those that 
contradict claim~3 or claim~4.
For example, we exclude the lattice types in \TAB{L2}[c,d] as they contradict claim~4
and the lattice polygon of \TAB{L2}d together with unimodular involution $\sigma_0$
would contradict claim~3.
We find that a candidate for $\bL(X)$ is equivalent to 
one of \TAB{L} or \TAB{L2}[a,b].
We recover $|\bS(X_N)|$ from the
monomial parametrization associated to the lattice polygon.
For each lattice type in \TAB{L} and \TAB{L2}[a,b], we apply claim~1 and find 
that either $\lambda=\infty$, $d>4$ or $|\bS(X_N)|>2$.
It follows that the name and celestial type of $X$ 
correspond to a row of \THM{circle}a.
If $\bL(X)$ is \TAB{L}d or \TAB{L2}c, 
then $X$ is a Veronese surface by \DEF{names} as 
the monomials associated to its lattice polygon span 
a basis for vector space of degree 2 forms on $\P^2$.
If $\bL(X)$ is \TAB{L}h, then $X$ is a 2-sphere by claim~1.
If $\bL(X)$ is \TAB{L}e, then there are 4 directions and 
thus $4\leq \lambda<\infty$ so that $X$ must be a ring cyclide.
If $\bL(X)$ is \TAB{L}f or \TAB{L}g,
then $X$ is a spindle cyclide and horn cyclide, \resp~(see forward \EXM{spindle-horn}). 
This concludes the proof of claim~5.  

{\it Claim 6: 
The lattice polygon of~$X$ is contained in a $3\times3$ grid centered at the origin.}
If $\lambda=\infty$ or $d>4$, then it follows from \THM{circle}a, 
that $X_N$ is unique up to projective equivalence 
and thus its lattice type is already realized in \TAB{L} or \TAB{L2} at claim~5.
If $\lambda<\infty$, then  
the lattice polygon of~$X$ must be one of \citep[Theorem~8.3.7]{toric}
\st $(i,b)$ is as in claim~1.
It follows that claim~6 holds.

{\it Claim 7: \COR{L} holds.}
Notice that $X$ is covered by two families of conics that contain real points.
Therefore the real structure of a celestial double Segre surface 
must be inner automorphic to $\sigma_+\times\sigma_+$ 
by \LEM{invo-P1P1}b so that \COR{L}a holds. 
\COR{L}b is a consequence of \LEM{invo-P1P1}c.

We concluded the proof of \PRP{L},
since it follows from claims~5, 6 and 7 that 
$\bL(X)$, $\bT(X)$ and the name of $X$ is 
up to equivalence characterized by one of the eight cases in \TAB{L}.
\end{proof}

\section{Embeddings of \texorpdfstring{$\P^1\times\P^1$}{P1xP1}}
\label{sec:segre}

In this section we give explicit coordinates for double Segre surfaces, which
are embeddings of $\P^1\times\P^1$ into $\P^8$.
We describe how real structures and projective automorphisms act on these embeddings.

We denote the vector space of quadratic forms in the ideal $I(X)$ of a 
surface $X$ as
\[
I_2(X):=\bas{q \in I(X) ~|~ \deg q=2 }_\C. 
\]

\begin{lemma}
\label{lem:I2}
If $X$ is a toric celestial surface, then $\dim I_2(X_N)$ for the 
linear normalization $X_N$ is as follows: 
\begin{center}
\begin{tabular}{rcccccccc}
\TAB{L}:         & a  & b  & c & d & e & f & g & h \\\hline
$\dim I_2(X_N)$: & 20 & 9  & 9 & 6 & 2 & 2 & 2 & 1
\end{tabular}.
\end{center}
\end{lemma}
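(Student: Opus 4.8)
The plan is to reduce $\dim I_2(X_N)$ to a count of lattice points of the lattice polygon $\Lambda$ of $X$, which by \PRP{L} is, up to unimodular equivalence, one of the eight polygons of \TAB{L}.

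Fix a monomial parametrization $\xi\c\T^2\to X_N\subseteq\P^m$ associated to $\Lambda$. Since $X_N$ is linearly normal and toric, its homogeneous coordinates $x_0,\dots,x_m$ correspond precisely to the lattice points $v_0,\dots,v_m$ of $\Lambda$, with $v_k=(a_k,b_k)$ and $m+1=|\Lambda\cap\Z^2|$. Because $X_N$ is the Zariski closure of the torus orbit $\xi(\T^2)$, a quadratic form $q=\sum_{i\le j}c_{ij}x_ix_j$ lies in $I(X_N)$ if and only if it vanishes on $\xi(\T^2)$, i.e. if and only if the Laurent polynomial $\sum_{i\le j}c_{ij}s^{a_i+a_j}u^{b_i+b_j}$ is identically zero; collecting terms according to their exponent, this is exactly one linear condition for each element of the Minkowski sumset $\Sigma:=(\Lambda\cap\Z^2)+(\Lambda\cap\Z^2)$. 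Hence $I_2(X_N)$ is the kernel of the linear map $\sym^2(\C^{m+1})\to\C[s^{\pm1},u^{\pm1}]$ sending $x_ix_j$ to $s^{a_i+a_j}u^{b_i+b_j}$, whose image has dimension $|\Sigma|$, so
\[
\dim I_2(X_N)=\binom{m+2}{2}-|\Sigma|.
\]
Note that this uses only density of the torus orbit, not projective normality of the (possibly singular) surface $X_N$.

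It remains to compute $|\Sigma|$ for the eight polygons. Every two-dimensional lattice polytope has the integer decomposition property — or, just as quickly, one inspects the eight polygons directly — so $\Sigma=2\Lambda\cap\Z^2$, the set of lattice points of the dilation $2\Lambda$. If $\Lambda$ has area $A$ and $b$ boundary lattice points, then $2\Lambda$ has area $4A$ and $2b$ boundary lattice points, so Pick's formula gives $|\Lambda\cap\Z^2|=A+\tfrac b2+1$ and $|\Sigma|=4A+b+1$. Here $b$ is the anticanonical degree $-\kk\cdot\hh$ and $d:=2A$ is the degree of $X_N$, so the relevant data is the triple $(i,b,d)$ (with $i$ the number of interior lattice points and $m=i+b-1$) already recorded in Claim~1 of the proof of \PRP{L}: it is $(1,8,8)$ for the double Segre surface, $(1,6,6)$ for the dP6 and the weak dP6, $(0,6,4)$ for the Veronese surface, $(1,4,4)$ for the ring, spindle and horn cyclides, and $(0,4,2)$ for the $2$-sphere. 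Substituting $m$ and $|\Sigma|=2d+b+1$ into the displayed formula produces the eight values $20,\,9,\,9,\,6,\,2,\,2,\,2,\,1$; for instance case a has $m=8$ and $|\Sigma|=25$, hence $\dim I_2(X_N)=\binom{10}{2}-25=20$.

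There is no genuine obstacle here: the whole content is the reduction in the second paragraph, after which the computation is bookkeeping. The two points that deserve a line of care are the identification of $I_2(X_N)$ with the kernel of the monomial multiplication map — which is why one argues on the dense torus orbit instead of appealing to projective normality of $X_N$ — and the equality $\Sigma=2\Lambda\cap\Z^2$, which one may take from the planar case of the integer decomposition property of lattice polytopes or simply verify by hand for each of the eight polygons in \TAB{L}.
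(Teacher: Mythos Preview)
Your proof is correct and takes a genuinely different route from the paper. The paper computes $\dim I_2(X_N)=\binom{m+2}{2}-h^0(2\hh)$ and obtains $h^0(2\hh)$ via Riemann--Roch together with Kawamata--Viehweg vanishing on the smooth model, feeding in the classes~$\hh$ and~$\kk$ from \THM{circle}b; in particular it implicitly uses 2-normality of $X_N$ to identify the image of the restriction map with $H^0(2\hh)$. You instead stay entirely inside the toric picture: the kernel description on the dense torus orbit gives $\dim I_2(X_N)=\binom{m+2}{2}-|\Sigma|$ directly (neatly sidestepping any projective-normality argument), and the integer decomposition property of plane lattice polygons plus Pick's formula turns $|\Sigma|=|2\Lambda\cap\Z^2|$ into the arithmetic $2d+b+1$, which you then read off from the $(i,b,d)$ data already established in the proof of \PRP{L}. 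Your argument is more elementary and self-contained within the lattice framework of \SEC{L}; the paper's argument is less combinatorial but would apply verbatim to non-toric weak del Pezzo surfaces as well. Both compute the same quantity $h^0(2\hh)=|2\Lambda\cap\Z^2|$, just from opposite ends of the toric dictionary.
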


\begin{proof}
The dimension of the space $U$ of quadratic forms vanishing on $X_N\subset\P^m$ is equal to $h^0(2\hh)$, 
where $\hh$ is the class of hyperplane sections.
The dimension of the space $W$ of quadratic forms in $\P^m$ is equal to $\binom{2+m}{2}$. 
Thus we find that
\[
\dim I_2(X_N)=\dim W/U=\dim W - \dim U=\binom{2+m}{2}-h^0(2\hh). 
\]
We obtain $h^0(2\hh)=\frac{1}{2}(4\,\hh^2-2\,\hh\cdot \kk)+1$
as a straightforward consequence of
\THM{circle}c,
Riemann-Roch theorem and Kawamata-Viehweg vanishing theorem.
The main assertion now follows from \THM{circle}b.
\end{proof}

\begin{table}[!ht]
\centering
\caption{Coordinates for lattice points.}
\label{tab:coord}
\begin{tabular}{c@{\hspace{3cm}}c}
double Segre surface & Veronese surface
\\
\begin{tikzpicture}
\draw[draw=black, fill=white ] (-1,1) circle [radius=3mm] node[black] {$y_8$};
\draw[draw=black, fill=white ] ( 0,1) circle [radius=3mm] node[black] {$y_3$};
\draw[draw=black, fill=white ] ( 1,1) circle [radius=3mm] node[black] {$y_5$};

\draw[draw=black, fill=white ] (-1,0) circle [radius=3mm] node[black] {$y_2$};
\draw[draw=black, fill=white ] ( 0,0) circle [radius=3mm] node[black] {$y_0$};
\draw[draw=black, fill=white ] ( 1,0) circle [radius=3mm] node[black] {$y_1$};

\draw[draw=black, fill=white ] (-1,-1) circle [radius=3mm] node[black] {$y_6$};
\draw[draw=black, fill=white ] ( 0,-1) circle [radius=3mm] node[black] {$y_4$};
\draw[draw=black, fill=white ] ( 1,-1) circle [radius=3mm] node[black] {$y_7$};

\node at (-1.8,1.5) {\tiny $(-1,1)$};
\node at (0,1.5) {\tiny $(0,1)$};
\node at (1.8,1.5) {\tiny $(1,1)$};

\node at (-1.8,0) {\tiny $(-1,0)$};
\node at (1.8,0) {\tiny $(1,0)$};

\node at (-1.8,-1.5) {\tiny $(-1,-1)$};
\node at (0,-1.5) {\tiny $(0,-1)$};
\node at (1.8,-1.5) {\tiny $(1,-1)$};
\end{tikzpicture}
&
\begin{tikzpicture}
\draw[draw=black, fill=white ] (-1,1) circle [radius=3mm] node[black] {$y_5$};
\draw[draw=black, fill=white ] ( 0,1) circle [radius=3mm] node[black] {$y_7$};
\draw[draw=black, fill=white ] ( 1,1) circle [radius=3mm] node[black] {$y_8$};

\draw[draw=black, fill=white ] (-1,0) circle [radius=3mm] node[black] {$y_3$};
\draw[draw=black, fill=white ] ( 0,0) circle [radius=3mm] node[black] {$y_1$};
\draw[draw=black, fill=white ] ( 1,0) circle [radius=3mm] node[black] {$y_6$};

\draw[draw=black, fill=white ] (-1,-1) circle [radius=3mm] node[black] {$y_0$};
\draw[draw=black, fill=white ] ( 0,-1) circle [radius=3mm] node[black] {$y_2$};
\draw[draw=black, fill=white ] ( 1,-1) circle [radius=3mm] node[black] {$y_4$};

\node at (-1.8,1.5) {\tiny $(2,0)$};
\node at (0,1.5) {\tiny $(2,1)$};
\node at (1.8,1.5) {\tiny $(2,2)$};

\node at (-1.8,0) {\tiny $(1,0)$};
\node at (1.8,0) {\tiny $(1,2)$};

\node at (-1.8,-1.5) {\tiny $(0,0)$};
\node at (0,-1.5) {\tiny $(0,1)$};
\node at (1.8,-1.5) {\tiny $(0,2)$};
\end{tikzpicture}
\end{tabular}
\end{table}

Let $\Ys\subset\P^8$ denote the linear normalization of the double Segre surface 
with lattice polygon as in \TAB{L}a.
We consider the left coordinates in \TAB{coord} so that we obtain the parametric map 
\begin{gather*}
\xi\c\T^2\to \Ys\subset\P^8,
\quad
(s,u)\mapsto
\\
\begin{array}{l@{~}c@{\colon}c@{\colon}c@{\colon}c@{\colon}c@{\colon}c@{\colon}c@{\colon}c@{\colon}c@{~}l}
( & 1 & s & s^{-1} & u & u^{-1} & su & s^{-1}u^{-1} & su^{-1} & s^{-1}u & ) =
\\
( & y_0 & y_1 & y_2 & y_3 & y_4 & y_5 & y_6 & y_7 & y_8 & ).
\end{array}
\end{gather*}

Using $\xi$ we find the following 20 generators for
the vector space of quadratic forms on $\Ys$
and it follows from \LEM{I2} that these form a basis:
\begin{gather*}
I_2(\Ys)=
\langle
y_0^2  - y_1y_2,~ 
y_0^2  - y_3y_4,~ 
y_0^2  - y_5y_6,~
y_0^2  - y_7y_8,~ 
y_1^2  - y_5y_7,~ 
y_2^2  - y_6y_8,~
\\
y_3^2  - y_5y_8,~ 
y_4^2  - y_6y_7,~ 
y_0y_1 - y_4y_5,~
y_0y_2 - y_3y_6,~
y_0y_3 - y_2y_5,~
y_0y_4 - y_1y_6,~
\\
y_0y_1 - y_3y_7,~
y_0y_2 - y_4y_8,~
y_0y_3 - y_1y_8,~
y_0y_4 - y_2y_7,~
y_0y_5 - y_1y_3,~
\\
y_0y_6 - y_2y_4,~
y_0y_7 - y_1y_4,~
y_0y_8 - y_2y_3 
\rangle_\C.
\end{gather*}

\begin{lemma}
\textrm{\bf(real structures for $\P^8$)}
\label{lem:invo-P8}
\\
Let $\Mi$ denote the imaginary unit. 
The maps $\sigma_i\c\P^8\to\P^8$
and $\mu_i\c\P^8\to\P^8$ 
which are defined by
{\footnotesize%
\begin{gather*}
\begin{array}{l@{}l}
\sigma_0
&\colon
y \mapsto 
(
\overline{y_0}:
\overline{y_1}:
\overline{y_2}:
\overline{y_3}:
\overline{y_4}:
\overline{y_5}:
\overline{y_6}:
\overline{y_7}:
\overline{y_8}
), 
\\
\sigma_1
&\colon
y \mapsto 
(
\overline{y_0}:
\overline{y_2}:
\overline{y_1}:
\overline{y_3}:
\overline{y_4}:
\overline{y_8}:
\overline{y_7}:
\overline{y_6}:
\overline{y_5}
), 
\\
\sigma_2
&\colon
y \mapsto 
(
\overline{y_0}:
\overline{y_2}:
\overline{y_1}:
\overline{y_4}:
\overline{y_3}:
\overline{y_6}:
\overline{y_5}:
\overline{y_8}:
\overline{y_7}
), 
\\
\sigma_3
&\colon
y \mapsto 
(
\overline{y_0}:
\overline{y_3}:
\overline{y_4}:
\overline{y_1}:
\overline{y_2}:
\overline{y_5}:
\overline{y_6}:
\overline{y_8}:
\overline{y_7}
), 
\\
\mu_0
&\colon
x
\mapsto 
x,
\\
\mu_1
&\colon
x
\mapsto
( 
x_0
:x_1 + \Mi x_2  
:x_1 - \Mi x_2  
:x_3            
:x_4            
:x_5 + \Mi x_8  
:x_7 - \Mi x_6  
:x_7 + \Mi x_6  
:x_5 - \Mi x_8   
),
\\
\mu_2
&\colon
x
\mapsto
(
\text{\scalebox{0.90}{$
\frac{x_0}{2}
:x_1 + \Mi x_2  
:x_1 -\Mi x_2   
:x_3 + \Mi x_4  
:x_3 -\Mi x_4   
:x_5 + \Mi x_6  
:x_5 -\Mi x_6   
:x_7 -\Mi x_8   
:x_7 + \Mi x_8   
$}}
),
\\
\mu_3
&\colon
x
\mapsto
( 
x_0
:x_3 - \Mi x_1  
:x_2 + \Mi x_4  
:x_3 + \Mi x_1            
:x_2 - \Mi x_4           
:x_5  
:x_6  
:x_8 - \Mi x_7  
:x_8 + \Mi x_7   
),
\end{array}
\end{gather*}}%
make the following diagram commute for all $0\leq i\leq 3$:
\[
\begin{tikzcd}
\T^2\arrow[d,"\sigma_i"']\arrow[r,"\xi"] & \Ys\arrow[d,"\sigma_i"]\arrow[r,"\mu_i^{-1}"] & X_i\arrow[d,"\sigma_0"] \\
\T^2                     \arrow[r,"\xi"] & \Ys                    \arrow[r,"\mu_i^{-1}"] & X_i                
\end{tikzcd}
\]
where $X_i:=\mu_i^{-1}(\Ys)$ and
real structure
$\sigma_i\c\T^2\to\T^2$
is defined in \LEM{invo-T2}.
\end{lemma}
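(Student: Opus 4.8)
Fix $i\in\{0,1,2,3\}$. The assertion is the commutativity of two squares: the \emph{left} one says $\xi\circ\sigma_i=\sigma_i\circ\xi$, where $\sigma_i\c\T^2\to\T^2$ is taken from \LEM{invo-T2} and $\sigma_i\c\P^8\to\P^8$ is as displayed; the \emph{right} one says $\mu_i^{-1}\circ\sigma_i=\sigma_0\circ\mu_i^{-1}$ on $\Ys$. Observe first that each $\sigma_i\c\P^8\to\P^8$ is complex conjugation composed with an involutive permutation of the coordinates, hence an antiholomorphic involution, and that each $\mu_i$ is a complex projective transformation: after a suitable relabelling its matrix is block diagonal with invertible $1\times1$ and $2\times2$ blocks, so $\mu_i^{-1}$ is well defined. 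The lemma thus reduces to two finite coordinate computations, and the plan is to carry these out.

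For the left square, the plan is to substitute the formula for $\sigma_i\c\T^2\to\T^2$ from \LEM{invo-T2} into the monomial map $\xi$ and to compare, entry by entry, with $\sigma_i(\xi(s,u))$. The structural reason it works: the permutation of coordinates occurring in $\sigma_i\c\P^8\to\P^8$ is exactly the permutation of the nine lattice points of the polygon of \TAB{L}a that is induced by the unimodular involution $\sigma_i\c\Z^2\to\Z^2$, read off through the labelling of \TAB{coord}; since $\sigma_i$ fixes, negates, or transposes the two lattice coordinates, the corresponding monomial substitution in $(s,u)$ matches the ``conjugate-then-permute'' recipe monomial by monomial. For instance, for $i=1$,
\[
\xi(1/\overline{s},\overline{u})=(1:1/\overline{s}:\overline{s}:\overline{u}:1/\overline{u}:\overline{u}/\overline{s}:\overline{s}/\overline{u}:1/(\overline{s}\,\overline{u}):\overline{s}\,\overline{u})=\sigma_1(\xi(s,u)).
\]
Since $\xi(\T^2)$ is dense in $\Ys$ and $\sigma_i\c\P^8\to\P^8$ is a homeomorphism, this identity also yields $\sigma_i(\Ys)=\Ys$, so the vertical arrow $\sigma_i\c\Ys\to\Ys$ makes sense and the left square commutes.

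For the right square, the plan is to prove the stronger identity $\sigma_i\circ\mu_i=\mu_i\circ\sigma_0$ as maps $\P^8\to\P^8$ (equivalently, $\mu_i^{-1}\circ\sigma_i=\sigma_0\circ\mu_i^{-1}$ everywhere), by expanding both sides coordinatewise and using $\overline{\Mi}=-\Mi$; concretely, $\sigma_i(\mu_i(x))$ equals $\mu_i(\overline{x})$ because the $2\times2$ blocks of $\mu_i$ are arranged precisely so that conjugating the standard real structure $\sigma_0$ by $\mu_i$ reproduces the coordinate transposition built into $\sigma_i$. Granting this, $\sigma_0(X_i)=\sigma_0(\mu_i^{-1}(\Ys))=\mu_i^{-1}(\sigma_i(\Ys))=\mu_i^{-1}(\Ys)=X_i$, so $\sigma_0$ restricts to $X_i$ and the right square commutes (in particular on $\Ys$). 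Pasting the two squares gives the full diagram for all $i$.

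The only genuine difficulty is clerical: tracking the nine indices simultaneously through $\xi$, through the transpositions hidden in $\sigma_i$, and through the $\pm\Mi$ coefficients of $\mu_i$. To keep this under control I would first record, for each $i$, the permutation of the nine lattice points induced by $\sigma_i\c\Z^2\to\Z^2$, and then read off both $\sigma_i\c\P^8\to\P^8$ and the diagonalizing transformation $\mu_i$ directly from that permutation; a short symbolic computation (in the spirit of the implementation mentioned in the introduction) then gives an independent check of all four cases.
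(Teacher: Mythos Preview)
Your proposal is correct and follows essentially the same approach as the paper: the paper's proof simply notes that the $\sigma_i$ on $\P^8$ are read off from the action of the unimodular involutions on the lattice coordinates of \TAB{coord}, and that the commutativity with $\mu_i^{-1}$ is a straightforward verification. Your write-up is a more detailed unpacking of exactly this---decomposing into the two squares, explaining the lattice-permutation origin of $\sigma_i$, and reducing the right square to the identity $\sigma_i\circ\mu_i=\mu_i\circ\sigma_0$---so there is no substantive difference in method.
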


\begin{proof}
The specification of $\sigma_i\c\Ys\to\Ys$ for $0\leq i\leq 3$ follows from the action on the lattice coordinates in \TAB{coord}
(recall \NTN{nota}).
It is straightforward to verify that $\mu_i^{-1}$ makes the diagram commute.
\end{proof}

\begin{remark}
\label{rmk:invo-P8}
Recall from \COR{L}, \NTN{S1S1} and \LEM{invo-P1P1} that the real structures 
$\sigma_i\c\Ys\to\Ys$ for $0\leq i\leq 2$
are inner automorphic to $\sigma_+\times \sigma_+$
via $\aut(\P^1\times\P^1)$ so that $X_i\cong\S^1\times\S^1$ in these cases.
Notice that $\sigma_3\c\Ys\to\Ys$ is 
via $\aut(\P^1\times\P^1)$ inner automorphic to $\sigma_s$
so that $X_3\cong\S^2$.
\END
\end{remark}

The surface $X_2$ from \LEM{invo-P8} is contained in $\S^7$. 
Indeed, if we compose the first four generators of $I_2(\Ys)$ with $\mu_2$, then
\[
\begin{array}{c@{\quad}c}
(y_0^2-y_1y_2)\circ\mu_2=\frac{1}{4}\,x_0^2-x_1^2-x_2^2, & (y_0^2-y_5y_6)\circ\mu_2=\frac{1}{4}\,x_0^2-x_5^2-x_6^2,\\
(y_0^2-y_3y_4)\circ\mu_2=\frac{1}{4}\,x_0^2-x_3^2-x_4^2, & (y_0^2-y_7y_8)\circ\mu_2=\frac{1}{4}\,x_0^2-x_7^2-x_8^2,
\end{array}
\]
and their sum is the equation of $\S^7$.

We extend $\xi\c\T^2\to\Ys$ \st we obtain the biregular isomorphism
\begin{gather*}
\tilde{\xi}\c\P^1\times\P^1\to\Ys,\quad(s:t;u:w)\mapsto
\\
\begin{array}{l@{~}c@{\colon}c@{\colon}c@{\colon}c@{\colon}c@{\colon}c@{\colon}c@{\colon}c@{\colon}c@{~}l}
( 
& s t u w
& s^2 u w
& t^2 u w
& s t u^2
& s t w^2
& s^2 u^2
& t^2 w^2 
& s^2 w^2
& t^2 u^2
& ) =
\\
( & y_0 & y_1 & y_2 & y_3 & y_4 & y_5 & y_6 & y_7 & y_8 & ),
\end{array}
\end{gather*}
and thus    
$
\autc(\Ys)\cong  \autc(\P^1\times\P^1).
$

\begin{lemma}
\label{lem:autP1P1}
$\autc(\P^1\times\P^1)\cong\autc(\P^1)\times\autc(\P^1)$.
\end{lemma}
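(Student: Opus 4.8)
The plan is to exhibit an isomorphism of algebraic groups
$\Phi\c\autc(\P^1)\times\autc(\P^1)\to\autc(\P^1\times\P^1)$ by
$(\alpha,\beta)\mapsto\alpha\times\beta$, where $(\alpha\times\beta)(p;q):=(\alpha(p);\beta(q))$.
First I would note that $\Phi$ is an injective homomorphism of algebraic groups:
if $\alpha\times\beta=\operatorname{id}$ then evaluating on $\{p\}\times\P^1$ and $\P^1\times\{q\}$
forces $\alpha=\beta=\operatorname{id}$. Since the source $\autc(\P^1)\times\autc(\P^1)$ is connected
and $\Phi$ is continuous with $\Phi(\operatorname{id},\operatorname{id})=\operatorname{id}$, the image lies in the identity
component $\autc(\P^1\times\P^1)$. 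It remains to prove surjectivity onto $\autc(\P^1\times\P^1)$.

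For surjectivity, let $\pi_1,\pi_2\c\P^1\times\P^1\to\P^1$ be the two projections, with fiber
classes $f_1,f_2$ generating $N(\P^1\times\P^1)=\Z f_1\oplus\Z f_2$. The key observation is that
any $\varphi\in\aut(\P^1\times\P^1)$ induces a unimodular automorphism of the lattice
$N(\P^1\times\P^1)$; the two ruling classes are intrinsically characterized (e.g.\ as the only
two primitive nef classes $c$ with $c^2=0$ and $h^0(c)=2$), so $\varphi$ either fixes both
$f_1,f_2$ or swaps them. The map $\aut(\P^1\times\P^1)\to\aut\big(N(\P^1\times\P^1)\big)$ is locally
constant, hence trivial on the identity component; therefore every $\varphi\in\autc(\P^1\times\P^1)$
fixes $f_1$ and $f_2$, i.e.\ it maps fibers of $\pi_1$ to fibers of $\pi_1$ and likewise for $\pi_2$.

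Now fix such a $\varphi$. Since $\varphi$ carries fibers of $\pi_i$ to fibers of $\pi_i$, it descends
to automorphisms $\alpha,\beta\in\aut(\P^1)$ with $\pi_1\circ\varphi=\alpha\circ\pi_1$ and
$\pi_2\circ\varphi=\beta\circ\pi_2$. Consider $\psi:=(\alpha^{-1}\times\beta^{-1})\circ\varphi$; then
$\pi_1\circ\psi=\pi_1$ and $\pi_2\circ\psi=\pi_2$, and since a point of $\P^1\times\P^1$ is
determined by its pair of projections, $\psi=\operatorname{id}$. Hence $\varphi=\alpha\times\beta=\Phi(\alpha,\beta)$,
and $(\alpha,\beta)$ lies in the identity component because $\Phi$ is a homeomorphism onto its
(closed) image. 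This proves $\Phi$ is surjective onto $\autc(\P^1\times\P^1)$, and therefore an
isomorphism. The step I expect to require the most care is the intrinsic characterization of the
two ruling classes guaranteeing that $\aut(\P^1\times\P^1)$ acts on $N(\P^1\times\P^1)$ only by
permuting $\{f_1,f_2\}$; everything after that is formal.
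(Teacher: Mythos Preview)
Your proof is correct and follows essentially the same approach as the paper: both hinge on the fact that the identity component $\autc(\P^1\times\P^1)$ acts trivially on the N\'eron--Severi lattice $N(\P^1\times\P^1)=\bas{\l_0,\l_1}_\Z$, hence sends fibers of each $\pi_i$ to fibers of the same $\pi_i$. The paper's proof is more terse and leaves the descent to $(\alpha,\beta)$ and the verification that $(\alpha,\beta)\in\autc(\P^1)\times\autc(\P^1)$ implicit, whereas you spell these out carefully; your justification of the intrinsic characterization of the ruling classes is a detail the paper simply asserts.
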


\begin{proof}
Automorphisms in the identity component $\autc(\P^1\times\P^1)$ act 
trivially on the Neron-Severi lattice $N(\P^1\times\P^1)=\bas{\l_0,\l_1}_\Z$,
where generators 
$\l_0$ and $\l_1$
are the classes of the fibers of the first and second projection of $\P^1\times\P^1$
to $\P^1$.
Thus a fiber of $\pi_i$ is mapped by $\varphi\in\autc(\P^1\times\P^1)$
as a whole to a fiber of $\pi_i$ for all $1\leq i\leq 2$
so that the main assertion is concluded.
We remark that 
$\aut_\circ(\P^1)\cong\operatorname{PSL}(2,\R)
\subsetneq
\operatorname{PGL}(2,\R)\cong\aut(\P^1)$.
\end{proof}

We associate to $\varphi=(\varphi_1,\varphi_2)\in \autc(\P^1)\times\autc(\P^1)$ an automorphism 
\[
\cS(\varphi):=\sym_2(\varphi_1)\otimes \sym_2(\varphi_2)\in \autc(\Ys)\subset \autc(\P^8). 
\]
We can compute $\cS(\varphi)$ via the following specification:
\begin{equation}
\label{eqn:S}
\cS(\varphi)\c\Ys\to\Ys,\quad
\tilde{\xi}(p) 
\mapsto
(\tilde{\xi}\circ\varphi)(p),
\end{equation}
for all $p\in\P^1\times\P^1\cong \Ys$.

\begin{example}
\textrm{\bf(toric automorphisms of $\P^1\times\P^1$)}
\label{exm:toric}
\\
Since $\varphi\circ\xi\c\T^2\hto\Ys$ defines an embedding of the algebraic torus $\T^2$
for all automorphisms $\varphi \in \aut(\Ys)$,
the double Segre surface $\Ys$ does not have a unique toric structure. 
Let $\aut^\T_\circ(\Ys)$ denote the identity component of the toric 
automorphisms \wrt $\xi$. We have the following parametrization:
\begin{gather*}
\phi\c\T^2\stackrel{\cong}{\to}\aut^\T_\circ(\Ys),\quad (s,u)\mapsto B(u)\circ A(s),
\qquad\text{where}
\\
A(\alpha):=\cS
\left(
\begin{bmatrix}
\alpha & 0\\ 
0        & \alpha^{-1} 
\end{bmatrix}
,
\begin{bmatrix}
1 & 0\\ 
0 & 1
\end{bmatrix}
\right) 
~~~\text{and}~~~
B(\alpha):=\cS
\left(
\begin{bmatrix}
1 & 0\\ 
0 & 1 
\end{bmatrix}
,
\begin{bmatrix}
\alpha & 0\\ 
0        & \alpha^{-1}   
\end{bmatrix}
\right).
\end{gather*}
Suppose that the real structure of $\Ys$ is defined by $\sigma_2$ in \LEM{invo-P8}.
It follows from \LEM{invo-T2} that 
$\set{p\in\T^2}{\sigma_2(p)=p}\cong S^1\times S^1$ 
and thus
\begin{gather*}
\phi\colon S^1\times S^1\stackrel{\cong}{\to}\aut^\T_\circ(\Ys),\quad
\Bigl(~(\cos(\alpha),\sin(\alpha)),~(\cos(\beta),\sin(\beta))~\Bigr)
\mapsto
\\
\text{\footnotesize{$
\cS
\left(
\begin{bmatrix}
\cos(\alpha)+\Mi\sin(\alpha) & 0\\ 
0        & \cos(\alpha)-\Mi\sin(\alpha)
\end{bmatrix}
,
\begin{bmatrix}
\cos(\beta)+\Mi\sin(\beta) & 0\\ 
0        & \cos(\beta)-\Mi\sin(\beta) 
\end{bmatrix}
\right).
$}}
\end{gather*}
Let $\mu_2\c\P^8\to\P^8$ be as defined in \LEM{invo-P8}.
From the composition of $\phi$ with the pullback $\mu_2^*\c\aut^\T_\circ(\Ys)\to\aut^\T_\circ(X_2)$
we obtain
\begin{gather*}
\mu_2^*\circ\phi\colon S^1\times S^1\to \aut^\T_\circ(X_2),
\Bigl(~(\cos(\alpha),\sin(\alpha)),~(\cos(\beta),\sin(\beta))~\Bigr)
\mapsto
\\
\cS
\left(
\begin{bmatrix}
\cos(\alpha)& -\sin(\alpha) \\ 
\sin(\alpha)& \cos(\alpha)
\end{bmatrix}
,
\begin{bmatrix}
\cos(\beta)& -\sin(\beta) \\ 
\sin(\beta)& \cos(\beta)
\end{bmatrix}
\right)
.
\end{gather*}
Notice that the real structure of $X_2\subset\S^7$ is defined by $\sigma_0$ in \LEM{invo-P8}
and that $\autc^\T(X_2)\cong \PSO(2)\times \PSO(2)$. 
\END
\end{example}

\section{Blowups of \texorpdfstring{$\P^1\times\P^1$}{P1xP1}}
\label{sec:blowup}

The smooth model of a celestial surface that is not $\infty$-circled
is either $\P^1\times\P^1$ or the blowup of $\P^1\times\P^1$ 
in two or four points. 
Such a blowup is realized by a linear projection of the double Segre surface $\Ys$ in $\P^8$. 
The automorphisms of the image surface induce 
automorphisms of $\P^1\times\P^1$ that leave the center of blowup invariant. 
This allows us to formulate restrictions on the possible M\"obius automorphism groups
of celestial surfaces. 
In particular, we find that celestial surfaces with many symmetries must be toric.

\begin{proposition}
\textrm{\bf(blowups of $\P^1\times\P^1$)}
\label{prp:blowup}
\\
If a celestial surface $X\subset \S^n$ is not $\infty$-circled
and $\dim\autc(X)\geq 2$,  
then 
its linear normalization $X_N$ is a toric surface,
each family of circles on $X$ is toric,
and 
$\autc(X)$ embeds as a subgroup into $\autc(\P^1)\times\autc(\P^1)$.
Moreover, there exists a birational linear projection
\[
\rho\c\Ys\subset\P^8\dto X\subset\P^{n+1},
\]
whose center of projection is characterized by a row in \TAB{P1P1} 
together with $\bT(X)$, $\bS(X_N)$ and
the projections of  
$\autc(X)$
to a subgroup of $\autc(\P^1)$.
\end{proposition}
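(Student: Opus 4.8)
The idea is to pass to the smooth model $\X$ and to exploit that the connected group $\autc(\X)$ acts trivially on the discrete Neron--Severi lattice $N(\X)$. Since $X$ is not $\infty$-circled, $\lambda<\infty$, so \THM{circle}c presents $\X$ as the blowup $\pi\c\X\to\P^1\times\P^1$ of $\P^1\times\P^1$ in $2k$ nonreal complex conjugate points $p_1,\dots,p_{2k}$ with $k\in\{0,1,2\}$ --- possibly infinitely near, and with at most two non-infinitely-near points in any fiber of either projection --- while \THM{circle}b gives $\hh=-\kk=2\,\l_0+2\,\l_1-\p_1-\cdots-\p_{2k}$. Hence $X_N$ is the anticanonical model of $\X$. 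As $|2\,\l_0+2\,\l_1|$ on $\X$ is the pullback of the Segre system, it defines the morphism $\X\xrightarrow{\pi}\P^1\times\P^1\xrightarrow{\tilde\xi}\Ys\subset\P^8$, and $|-\kk|$ is its subsystem of forms vanishing along the $p_i$; therefore $\varphi_{-\kk}$ is $\tilde\xi\circ\pi$ followed by the linear projection of $\Ys$ away from the subspace spanned by the points $\tilde\xi(p_i)$ (and the tangent directions prescribed by the infinitely near ones). Composing with the degree-preserving linear projection $X_N\dto X$ gives the asserted birational linear projection $\rho\c\Ys\dto X$, whose center is exactly this subspace; so characterizing the center is the same as classifying the admissible configurations $p_1,\dots,p_{2k}$. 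Moreover every automorphism of $X$ lifts to $\X$ --- since $\X\to X$ contracts no $(-1)$-curve it is the minimal resolution of $X$, whose singularities are rational double points by \THM{circle}c, hence functorial --- so $\autc(X)\cong\autc(\X)$, which acts trivially on $N(\X)$ and therefore fixes $\l_0$, $\l_1$ and each exceptional class $\p_i$. Compatibility with $\l_0,\l_1$ makes each automorphism compatible with both rulings, so it descends to $\autc(\P^1\times\P^1)\cong\autc(\P^1)\times\autc(\P^1)$ by \LEM{autP1P1}; fixing the $\p_i$ forces the connected group to fix each $p_i$ (and the tangent direction of each infinitely near one). This yields the embedding $\autc(X)\hto\autc(\P^1)\times\autc(\P^1)\cong\PSL(2,\R)^2$, with image commuting with the real structure $\sigma$ and fixing $p_1,\dots,p_{2k}$.

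Next I would run the case analysis forced by $\dim\autc(X)\geq2$. Write $G:=\autc(X)\subseteq\PSL(2,\R)^2$, with projections $G_1,G_2$ to the two factors, so $G\subseteq G_1\times G_2$; here $G_1$ (resp.\ $G_2$) fixes the first (resp.\ second) $\P^1$-coordinates of all the $p_i$. If those first coordinates are not all equal, then $G_1$ fixes at least two points of $\P^1_\C$ and lies in a one-dimensional torus of $\PSL(2,\R)$, conjugate to $\PSO(2)$ or $\PSX(1)$ according as the two fixed points form a complex conjugate pair or a real pair; if they all coincide at a real point, then $G_1$ lies in the corresponding Borel, conjugate to $\PSA(1)$ (containing the parabolic $\PSE(1)$), and the fiber constraint of \THM{circle}c forces $k=1$ with both points in that fiber. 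The same dichotomy applies to $G_2$. Combining the two dichotomies, imposing $\dim G=\dim G_1+\dim G_2\geq2$, and discarding the configurations incompatible with the admissible real structures (\LEM{invo-P1P1}b, \LEM{invo-T2}) or with the absence of real lines on $X\subseteq\S^n$, leaves exactly the rows of \TAB{P1P1}. For each surviving configuration one reads off the remaining data: $\deg X_N=(-\kk)^2=8-2k$; a fiber passing through two non-infinitely-near (resp.\ two infinitely near) of the $p_i$ is contracted by $\varphi_{-\kk}$ to an $A_1$ (resp.\ $A_3$) point of $X_N$ by \THM{circle}c, with $\sigma$-invariance deciding whether it is underlined, which gives $\bS(X_N)$ and, together with the further projection $X_N\dto X$, the value of $\bT(X)$; and the stabilizer types found above give the projections of $\autc(X)$ to subgroups of $\autc(\P^1)$.

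Toricity then falls out of the same analysis. In every surviving configuration the stabilizer structure forces, after a coordinate change of each $\P^1$ factor, the points $p_i$ to sit at the torus-fixed positions $(0:1)$ and $(1:0)$ --- a one-dimensional torus of $\PSL(2,\R)$ has precisely those fixed points on the relevant $\P^1$ --- so the torus action on $\P^1\times\P^1$ preserving the $p_i$ descends to $X_N$, making $X_N$ toric, and $X$, a torus-equivariant linear projection of $X_N$, is toric too. Finally, each family of circles on $X$ is a pencil of conics, whose class $C$ satisfies $C^2=0$ and $C\cdot(-\kk)=2$; on these (weak) del Pezzo surfaces the only such classes are $\l_0$, $\l_1$ and $\l_0+\l_1-\p_i-\p_j$, each of which is the fiber class of a toric map, so each family of circles is toric (equivalently, by \SEC{L}, the minimal-width projections of the lattice polygon account for them all).

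The main obstacle is the second step: organizing the case distinction over the admissible configurations of $p_1,\dots,p_{2k}$ --- handling infinitely near points cleanly, matching each configuration with the correct real structure among those of \LEM{invo-P1P1}b and \LEM{invo-T2}, and verifying in each case that no strictly larger connected subgroup of $\PSL(2,\R)^2$ fixes the configuration --- and then extracting $\bS(X_N)$ and $\bT(X)$ uniformly to fill in every row of \TAB{P1P1}.
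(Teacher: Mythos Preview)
Your strategy is correct and matches the paper's proof closely: pass to the smooth model via \THM{circle}, descend $\autc(\X)$ to the subgroup of $\autc(\P^1)\times\autc(\P^1)$ preserving the blowup center, use $\dim\autc(X)\geq 2$ to cut down the admissible configurations, and then read off $\bT(X)$, $\bS(X_N)$ and the name from \THM{circle}a. Two small points to tighten.

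First, the formula $\dim G=\dim G_1+\dim G_2$ is false in general (a diagonal one-parameter subgroup has $\dim G_1=\dim G_2=1$ but $\dim G=1$); what you actually have and actually need is $\dim G\leq\dim G_1+\dim G_2\leq\dim H_1+\dim H_2$, where $H_i$ is the stabilizer of $\pi_i(\Lambda)$ in $\autc(\P^1)$. The paper organizes this step slightly differently: it first proves the single claim $|\pi_i(\Lambda)|\leq 2$ for $i=1,2$ (by contradiction, using 3-transitivity of $\autc(\P^1)$ and the fiber constraint in \THM{circle}c), after which the six rows of \TAB{P1P1} are just the remaining nonreal configurations. This avoids having to track $G_1,G_2$ separately.

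Second, your clause ``if they all coincide at a real point, then \dots\ $k=1$'' is not quite right: in row~(f) all second coordinates coincide at one real point but $k=2$, with $q,\overline q$ infinitely near to $p,\overline p$. The fiber constraint of \THM{circle}c only bounds the number of \emph{non-infinitely-near} points in a fiber.

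Your route to ``every family of circles is toric'' via classifying classes $C$ with $C^2=0$, $C\cdot(-\kk)=2$ in $N(\X)$ is a genuine alternative to the paper's argument. The paper instead observes that the linear normalization $X_N$ is determined up to projective isomorphism by the row of \TAB{P1P1}, hence $\bL(X)$ is determined, and then invokes \PRP{L} to see that the toric families already account for all circles. Your approach is more intrinsic but requires checking irreducibility and reality of the candidate classes on each weak del~Pezzo; the paper's approach is quicker once \PRP{L} is in hand.
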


\begin{table}
\caption{
See \PRP{blowup}.
The possible configurations of the center of blowup $\Lambda\subset\P^1\times\P^1$ realized by 
the birational linear projection $\rho\c\Ys\dto X$ 
via the isomorphism $\P^1\times\P^1\cong\Ys$.
At the entries for $\bT(X)$ we have $3\leq n\leq 7$ and $4\leq m\leq 5$.
Since $\autc(X)$ embeds into $\autc(\P^1)\times\autc(\P^1)$, we find that 
the projection $\pi_{i}(\autc(X))$ is a subgroup of $\autc(\P^1)$ for $i\in\{1,2\}$.
An entry for $\pi_1(\autc(X))$ and $\pi_2(\autc(X))$ denotes the maximal possible subgroup.
The vertical and horizontal line segments in the diagrams represent fibers of the projections 
$\pi_i\c\P^1\times\P^1\to\P^1$
for $i\in\{1,2\}$.
The complex conjugate points $q$ and $\overline{q}$ in diagram {\bf (f)} 
are infinitely near to $p$ and $\overline{p}$, \resp.    
A fiber that contains two centers of blowup is contracted by $\rho$ to an isolated singularity of~$X$.}
\label{tab:P1P1}
\centering
\vspace{-5mm}
\setlength\extrarowheight{2mm}
{\footnotesize%
\begin{tabular}{cccccl}
$\Lambda$   & $\bT(X)$   & $\bS(X_N)$                & $\pi_{1}(\autc(X))$  & $\pi_{2}(\autc(X))$ & name                  \\\hline
{\bf(a)}    & $(2,8,n)$  & $\emptyset$               & $\PSL(2)$            & $\PSL(2)$           & (projected) dS        \\
{\bf(b)}    & $(3,6,m)$  & $\emptyset$               & $\PSO(2)$            & $\PSO(2)$           & (projected) dP6       \\
{\bf(c)}    & $(2,6,m)$  & $A_1$                     & $\PSO(2)$            & $\PSA(1)$           & (projected) weak dP6  \\
{\bf(d)}    & $(4,4,3)$  & $4A_1$                    & $\PSO(2)$            & $\PSO(2)$           & ring cyclide          \\
{\bf(e)}    & $(2,4,3)$  & $2\underline{A_1}+2A_1$   & $\PSO(2)$            & $\PSX(1)$           & spindle cyclide       \\
{\bf(f)}    & $(2,4,3)$  & $\underline{A_3}+2A_1$    & $\PSO(2)$            & $\PSE(1)$           & horn cyclide          \\
\end{tabular}}%
\\[5mm]
\begin{tabular}{@{}c@{~~}c@{~~}c@{}}
\begin{tikzpicture}[scale=0.7]

\draw[line width=0.2mm, draw=black!40!green] (-2.5,+3.0) -- (-2.5,-1.0);
\draw[line width=0.2mm, draw=black!40!green] (+2.8,-2.0) -- (-1.0,-2.0);
\node[black!40!green] at (-2.5,-1.0-0.3) {$\P^1$};
\node[black!40!green] at (-1.0-0.3,-2.0) {$\P^1$};

\draw[line width=0.3mm, draw=gray ] (-1,3) -- (-1,-1);
\draw[line width=0.3mm, draw=gray ] (+0,3) -- (+0,-1);
\draw[line width=0.3mm, draw=gray ] (+1,3) -- (+1,-1);
\draw[line width=0.3mm, draw=gray ] (+2,3) -- (+2,-1);

\draw[line width=0.3mm, draw=gray ] (-1.5,+2.5) -- (2.8,+2.5);
\draw[line width=0.3mm, draw=gray ] (-1.5,+1.5) -- (2.8,+1.5);
\draw[line width=0.3mm, draw=gray ] (-1.5,+0.5) -- (2.8,+0.5);
\draw[line width=0.3mm, draw=gray ] (-1.5,-0.5) -- (2.8,-0.5);

\draw[line width=0.4mm, draw=black, -> ] (-1.8,+1.5) -- (-2.4,+1.5);
\draw[line width=0.4mm, draw=black, -> ] (+1.0,-1.2) -- (+1.0,-1.8);
\node at (-2.4+0.5,+1.5+0.3) {$\pi_1$};
\node at (+1.0+0.4,-1.8+0.4) {$\pi_2$}; 

\node[white] at (+0.0+0.1,-2.0-0.5) {$\pi_2(\overline{p})$}; 
\node[white] at (+2.0+0.1,-2.0-0.5) {$\pi_2(p)$};

\draw[draw=black, fill=white] (-2.5,-2.3) circle [radius=0.4];
\node at (-2.5,-2.3) {\large\bf a};
\end{tikzpicture}
&
\begin{tikzpicture}[scale=0.7]

\draw[line width=0.2mm, draw=black!40!green] (-2.5,+3.0) -- (-2.5,-1.0);
\draw[line width=0.2mm, draw=black!40!green] (+2.8,-2.0) -- (-1.0,-2.0);
\node[black!40!green] at (-2.5,-1.0-0.3) {$\P^1$};
\node[black!40!green] at (-1.0-0.3,-2.0) {$\P^1$};

\draw[line width=0.3mm, draw=gray ] (-1,3) -- (-1,-1);
\draw[line width=0.3mm, draw=gray ] (+0,3) -- (+0,-1);
\draw[line width=0.3mm, draw=gray ] (+1,3) -- (+1,-1);
\draw[line width=0.3mm, draw=gray ] (+2,3) -- (+2,-1);

\draw[line width=0.3mm, draw=gray ] (-1.5,+2.5) -- (2.8,+2.5);
\draw[line width=0.3mm, draw=gray ] (-1.5,+1.5) -- (2.8,+1.5);
\draw[line width=0.3mm, draw=gray ] (-1.5,+0.5) -- (2.8,+0.5);
\draw[line width=0.3mm, draw=gray ] (-1.5,-0.5) -- (2.8,-0.5);

\draw[line width=0.4mm, draw=black, -> ] (-1.8,+1.5) -- (-2.4,+1.5);
\draw[line width=0.4mm, draw=black, -> ] (+1.0,-1.2) -- (+1.0,-1.8);
\node at (-2.4+0.5,+1.5+0.3) {$\pi_1$};
\node at (+1.0+0.4,-1.8+0.4) {$\pi_2$};

\draw[line width=0.8mm, draw=red, dashed] (+0.0,+3.0) -- (+0.0,-2.0);
\draw[line width=0.8mm, draw=red, dashed] (+2.0,+3.0) -- (+2.0,-2.0);

\draw[line width=0.8mm, draw=blue, dashed ] (-2.5,+2.5) -- (+2.8,+2.5);
\draw[line width=0.8mm, draw=blue, dashed ] (-2.5,+0.5) -- (+2.8,+0.5);

\draw[draw=black, fill=purple!20] (+0.0,+0.5) circle [radius=0.2] node[below right] {$\overline{p}$};
\draw[draw=black, fill=purple!20] (+2.0,+2.5) circle [radius=0.2] node[below right] {$p$};

\draw[draw=black, fill=green!20] (-2.5,+2.5) circle [radius=0.2];
\draw[draw=black, fill=green!20] (-2.5,+0.5) circle [radius=0.2];
\node[black] at (-2.5+0.1,+2.5+0.5) {$\pi_1(p)$};
\node[black] at (-2.5+0.1,+0.5-0.5) {$\pi_1(\overline{p})$};

\draw[draw=black, fill=green!20] (+0.0,-2.0) circle [radius=0.2];
\draw[draw=black, fill=green!20] (+2.0,-2.0) circle [radius=0.2];
\node[black] at (+0.0+0.1,-2.0-0.5) {$\pi_2(\overline{p})$};
\node[black] at (+2.0+0.1,-2.0-0.5) {$\pi_2(p)$};

\draw[draw=black, fill=white] (-2.5,-2.3) circle [radius=0.4];
\node at (-2.5,-2.3) {\large\bf b};
\end{tikzpicture}
&
\begin{tikzpicture}[scale=0.7]

\draw[line width=0.2mm, draw=black!40!green] (-2.5,+3.0) -- (-2.5,-1.0);
\draw[line width=0.2mm, draw=black!40!green] (+2.8,-2.0) -- (-1.0,-2.0);
\node[black!40!green] at (-2.5,-1.0-0.3) {$\P^1$};
\node[black!40!green] at (-1.0-0.3,-2.0) {$\P^1$};

\draw[line width=0.3mm, draw=gray ] (-1,3) -- (-1,-1);
\draw[line width=0.3mm, draw=gray ] (+0,3) -- (+0,-1);
\draw[line width=0.3mm, draw=gray ] (+1,3) -- (+1,-1);
\draw[line width=0.3mm, draw=gray ] (+2,3) -- (+2,-1);

\draw[line width=0.3mm, draw=gray ] (-1.5,+2.5) -- (2.8,+2.5);
\draw[line width=0.3mm, draw=gray ] (-1.5,+1.5) -- (2.8,+1.5);
\draw[line width=0.3mm, draw=gray ] (-1.5,+0.5) -- (2.8,+0.5);
\draw[line width=0.3mm, draw=gray ] (-1.5,-0.5) -- (2.8,-0.5);

\draw[line width=0.4mm, draw=black, -> ] (-1.8,+1.5) -- (-2.4,+1.5);
\draw[line width=0.4mm, draw=black, -> ] (+1.0,-1.2) -- (+1.0,-1.8);
\node at (-2.4+0.5,+1.5+0.3) {$\pi_1$};
\node at (+1.0+0.4,-1.8+0.4) {$\pi_2$};

\draw[line width=0.8mm, draw=red, dashed] (+2.0,+3.0) -- (+2.0,-2.0);

\draw[line width=0.8mm, draw=blue, dashed ] (-2.5,+2.5) -- (+2.8,+2.5);
\draw[line width=0.8mm, draw=blue, dashed ] (-2.5,+0.5) -- (+2.8,+0.5);

\draw[draw=black, fill=purple!20] (+2.0,+0.5) circle [radius=0.2] node[below right] {$\overline{p}$};
\draw[draw=black, fill=purple!20] (+2.0,+2.5) circle [radius=0.2] node[below right] {$p$};

\draw[draw=black, fill=green!20] (-2.5,+2.5) circle [radius=0.2];
\draw[draw=black, fill=green!20] (-2.5,+0.5) circle [radius=0.2];
\node[black] at (-2.5+0.1,+2.5+0.5) {$\pi_1(p)$};
\node[black] at (-2.5+0.1,+0.5-0.5) {$\pi_1(\overline{p})$};

\draw[draw=black, fill=green!20] (+2.0,-2.0) circle [radius=0.2];
\node[black] at (+2.0+0.1,-2.0-0.5) {$\pi_2(p)$};

\draw[draw=black, fill=white] (-2.5,-2.3) circle [radius=0.4];
\node at (-2.5,-2.3) {\large\bf c};
\end{tikzpicture}
\\[5mm]
\begin{tikzpicture}[scale=0.7]

\draw[line width=0.2mm, draw=black!40!green] (-2.5,+3.0) -- (-2.5,-1.0);
\draw[line width=0.2mm, draw=black!40!green] (+2.8,-2.0) -- (-1.0,-2.0);
\node[black!40!green] at (-2.5,-1.0-0.3) {$\P^1$};
\node[black!40!green] at (-1.0-0.3,-2.0) {$\P^1$};

\draw[line width=0.3mm, draw=gray ] (-1,3) -- (-1,-1);
\draw[line width=0.3mm, draw=gray ] (+0,3) -- (+0,-1);
\draw[line width=0.3mm, draw=gray ] (+1,3) -- (+1,-1);
\draw[line width=0.3mm, draw=gray ] (+2,3) -- (+2,-1);

\draw[line width=0.3mm, draw=gray ] (-1.5,+2.5) -- (2.8,+2.5);
\draw[line width=0.3mm, draw=gray ] (-1.5,+1.5) -- (2.8,+1.5);
\draw[line width=0.3mm, draw=gray ] (-1.5,+0.5) -- (2.8,+0.5);
\draw[line width=0.3mm, draw=gray ] (-1.5,-0.5) -- (2.8,-0.5);

\draw[line width=0.4mm, draw=black, -> ] (-1.8,+1.5) -- (-2.4,+1.5);
\draw[line width=0.4mm, draw=black, -> ] (+1.0,-1.2) -- (+1.0,-1.8);
\node at (-2.4+0.5,+1.5+0.3) {$\pi_1$};
\node at (+1.0+0.4,-1.8+0.4) {$\pi_2$};

\draw[line width=0.8mm, draw=red, dashed] (+0.0,+3.0) -- (+0.0,-2.0);
\draw[line width=0.8mm, draw=red, dashed] (+2.0,+3.0) -- (+2.0,-2.0);

\draw[line width=0.8mm, draw=blue, dashed ] (-2.5,+2.5) -- (+2.8,+2.5);
\draw[line width=0.8mm, draw=blue, dashed ] (-2.5,+0.5) -- (+2.8,+0.5);

\draw[draw=black, fill=purple!20] (+0.0,+0.5) circle [radius=0.2] node[below right] {$\overline{p}$};
\draw[draw=black, fill=purple!20] (+0.0,+2.5) circle [radius=0.2] node[below right] {$q$};
\draw[draw=black, fill=purple!20] (+2.0,+0.5) circle [radius=0.2] node[below right] {$\overline{q}$};
\draw[draw=black, fill=purple!20] (+2.0,+2.5) circle [radius=0.2] node[below right] {$p$};

\draw[draw=black, fill=green!20] (-2.5,+2.5) circle [radius=0.2];
\draw[draw=black, fill=green!20] (-2.5,+0.5) circle [radius=0.2];
\node[black] at (-2.5+0.1,+2.5+0.5) {$\pi_1(p)$};
\node[black] at (-2.5+0.1,+0.5-0.5) {$\pi_1(\overline{p})$};

\draw[draw=black, fill=green!20] (+0.0,-2.0) circle [radius=0.2];
\draw[draw=black, fill=green!20] (+2.0,-2.0) circle [radius=0.2];
\node[black] at (+0.0+0.1,-2.0-0.5) {$\pi_2(\overline{p})$};
\node[black] at (+2.0+0.1,-2.0-0.5) {$\pi_2(p)$};

\draw[draw=black, fill=white] (-2.5,-2.3) circle [radius=0.4];
\node at (-2.5,-2.3) {\large\bf d};
\end{tikzpicture}
&
\begin{tikzpicture}[scale=0.7]

\draw[line width=0.2mm, draw=black!40!green] (-2.5,+3.0) -- (-2.5,-1.0);
\draw[line width=0.2mm, draw=black!40!green] (+2.8,-2.0) -- (-1.0,-2.0);
\node[black!40!green] at (-2.5,-1.0-0.3) {$\P^1$};
\node[black!40!green] at (-1.0-0.3,-2.0) {$\P^1$};

\draw[line width=0.3mm, draw=gray ] (-1,3) -- (-1,-1);
\draw[line width=0.3mm, draw=gray ] (+0,3) -- (+0,-1);
\draw[line width=0.3mm, draw=gray ] (+1,3) -- (+1,-1);
\draw[line width=0.3mm, draw=gray ] (+2,3) -- (+2,-1);

\draw[line width=0.3mm, draw=gray ] (-1.5,+2.5) -- (2.8,+2.5);
\draw[line width=0.3mm, draw=gray ] (-1.5,+1.5) -- (2.8,+1.5);
\draw[line width=0.3mm, draw=gray ] (-1.5,+0.5) -- (2.8,+0.5);
\draw[line width=0.3mm, draw=gray ] (-1.5,-0.5) -- (2.8,-0.5);

\draw[line width=0.4mm, draw=black, -> ] (-1.8,+1.5) -- (-2.4,+1.5);
\draw[line width=0.4mm, draw=black, -> ] (+1.0,-1.2) -- (+1.0,-1.8);
\node at (-2.4+0.5,+1.5+0.3) {$\pi_1$};
\node at (+1.0+0.4,-1.8+0.4) {$\pi_2$};

\draw[line width=0.8mm, draw=red, dashed] (+0.0,+3.0) -- (+0.0,-2.0);
\draw[line width=0.8mm, draw=red, dashed] (+2.0,+3.0) -- (+2.0,-2.0);

\draw[line width=0.8mm, draw=blue, dashed ] (-2.5,+2.5) -- (+2.8,+2.5);
\draw[line width=0.8mm, draw=blue, dashed ] (-2.5,+0.5) -- (+2.8,+0.5);

\draw[draw=black, fill=purple!20] (+0.0,+0.5) circle [radius=0.2] node[below right] {$\overline{p}$};
\draw[draw=black, fill=purple!20] (+0.0,+2.5) circle [radius=0.2] node[below right] {$p$};
\draw[draw=black, fill=purple!20] (+2.0,+0.5) circle [radius=0.2] node[below right] {$\overline{q}$};
\draw[draw=black, fill=purple!20] (+2.0,+2.5) circle [radius=0.2] node[below right] {$q$};

\draw[draw=black, fill=green!20] (-2.5,+2.5) circle [radius=0.2];
\draw[draw=black, fill=green!20] (-2.5,+0.5) circle [radius=0.2];
\node[black] at (-2.5+0.1,+2.5+0.5) {$\pi_1(p)$};
\node[black] at (-2.5+0.1,+0.5-0.5) {$\pi_1(\overline{p})$};

\draw[draw=black, fill=green!20] (+0.0,-2.0) circle [radius=0.2];
\draw[draw=black, fill=green!20] (+2.0,-2.0) circle [radius=0.2];
\node[black] at (+0.0+0.1,-2.0-0.5) {$\pi_2(p)$};
\node[black] at (+2.0+0.1,-2.0-0.5) {$\pi_2(q)$};

\draw[draw=black, fill=white] (-2.5,-2.3) circle [radius=0.4];
\node at (-2.5,-2.3) {\large\bf e};
\end{tikzpicture}
&
\begin{tikzpicture}[scale=0.7]

\draw[line width=0.2mm, draw=black!40!green] (-2.5,+3.0) -- (-2.5,-1.0);
\draw[line width=0.2mm, draw=black!40!green] (+2.8,-2.0) -- (-1.0,-2.0);
\node[black!40!green] at (-2.5,-1.0-0.3) {$\P^1$};
\node[black!40!green] at (-1.0-0.3,-2.0) {$\P^1$};

\draw[line width=0.3mm, draw=gray ] (-1,3) -- (-1,-1);
\draw[line width=0.3mm, draw=gray ] (+0,3) -- (+0,-1);
\draw[line width=0.3mm, draw=gray ] (+1,3) -- (+1,-1);
\draw[line width=0.3mm, draw=gray ] (+2,3) -- (+2,-1);

\draw[line width=0.3mm, draw=gray ] (-1.5,+2.5) -- (2.8,+2.5);
\draw[line width=0.3mm, draw=gray ] (-1.5,+1.5) -- (2.8,+1.5);
\draw[line width=0.3mm, draw=gray ] (-1.5,+0.5) -- (2.8,+0.5);
\draw[line width=0.3mm, draw=gray ] (-1.5,-0.5) -- (2.8,-0.5);

\draw[line width=0.4mm, draw=black, -> ] (-1.8,+1.5) -- (-2.4,+1.5);
\draw[line width=0.4mm, draw=black, -> ] (+1.0,-1.2) -- (+1.0,-1.8);
\node at (-2.4+0.5,+1.5+0.3) {$\pi_1$};
\node at (+1.0+0.4,-1.8+0.4) {$\pi_2$};

\draw[line width=0.8mm, draw=red, dashed] (+2.0,+3.0) -- (+2.0,-2.0);

\draw[line width=0.8mm, draw=blue, dashed ] (-2.5,+2.5) -- (+2.8,+2.5);
\draw[line width=0.8mm, draw=blue, dashed ] (-2.5,+0.5) -- (+2.8,+0.5);

\draw[draw=black, fill=purple!20] (+2.0,+0.5) circle [radius=0.2] node[below right] {$\overline{p}$};
\draw[draw=black, fill=purple!20] (+2.0,+2.5) circle [radius=0.2] node[below right] {$p$};

\draw[draw=black, fill=orange!30] (+2.0-0.4,+0.5+0.15) rectangle (+2.0-0.1,+0.5-0.15) node[below left] {$\overline{q}$};
\draw[draw=black, fill=orange!30] (+2.0-0.4,+2.5+0.15) rectangle (+2.0-0.1,+2.5-0.15) node[below left] {$q$};

\draw[draw=black, fill=green!20] (-2.5,+2.5) circle [radius=0.2];
\draw[draw=black, fill=green!20] (-2.5,+0.5) circle [radius=0.2];
\node[black] at (-2.5+0.1,+2.5+0.5) {$\pi_1(p)$};
\node[black] at (-2.5+0.1,+0.5-0.5) {$\pi_1(\overline{p})$};

\draw[draw=black, fill=green!20] (+2.0,-2.0) circle [radius=0.2];
\node[black] at (+2.0+0.1,-2.0-0.5) {$\pi_2(p)$};

\draw[draw=black, fill=white] (-2.5,-2.3) circle [radius=0.4];
\node at (-2.5,-2.3) {\large\bf f};
\end{tikzpicture}   
\end{tabular}
\end{table}

\begin{proof}
By \THM{circle}[c,b], 
the smooth model $\X$ is isomorphic to the blowup of $\P^1\times\P^1$ 
in a center $\Lambda$ \st $|\Lambda|\in\{0,2,4\}$
and $X_N$ is its anticanonical model.
Hence $\autc(X_N)\cong\autc(\X)$ and thus 
$\autc(X)$ defines a subgroup of $\autc(\X)$.
Moreover, $\autc(\X)$ is isomorphic to a subgroup of $\autc(\P^1\times\P^1)$
whose action leaves the blowup center $\Lambda$ invariant. 
It now follows from \LEM{autP1P1} that $\autc(X)$ is isomorphic to a subgroup of 
\[
\left\{ \varphi \in\autc(\P^1)\times\autc(\P^1)
~|~
\varphi
\bigl(
\pi_1(\Lambda),\pi_2(\Lambda)
\bigr)
=
\bigl(
\pi_1(\Lambda),\pi_2(\Lambda)
\bigr)
\right\},
\]
where $\pi_1$ and $\pi_2$
denote the projections of $\P^1\times\P^1$ to its $\P^1$ factors. 
We will denote the projections of $\autc(\P^1)\times\autc(\P^1)$
to its $\autc(\P^1)$ factors, by $\pi_1$ and $\pi_2$ as well.

We claim that $|\pi_1(\Lambda)|\leq 2$ and $|\pi_2(\Lambda)|\leq 2$.
Suppose by contradiction that $|\pi_1(\Lambda)|>2$ so that $|\Lambda|=4$.
Let $\pi_1(\autc(X))$ and $\pi_2(\autc(X))$
denote the subgroups of $\autc(\P^1)$
that preserve $\pi_1(\Lambda)$ and $\pi_2(\Lambda)$, \resp.  
Recall that $\autc(\P^1)$ is 3-transitive and thus $\dim\pi_1(\autc(X))=0$.
By assumption $\dim\autc(X)\geq 2$, hence 
$|\pi_2(\Lambda)|=1$ and $\dim\autc(X)=\dim\pi_2(\autc(X))=2$.
By \THM{circle}c at most two non-infinitely near points lie in the same fiber
and these points are nonreal.
Thus $\Lambda$ consist of 
two complex conjugate points and two infinitely near points. 
We arrived at a contradiction as $\pi_2(\autc(X))$
must be a proper subgroup of $\PSA(1)$
so that $\pi_2(\autc(X))\cong\PSE(1)$. 

\newpage
As $|\pi_1(\Lambda)|\leq 2$, $|\pi_2(\Lambda)|\leq 2$
and $\Lambda$ does not contain real points,
it follows that all possible configurations of 
$\Lambda$ together with  $\pi_1(\autc(X))$ and $\pi_2(\autc(X))$
are listed in \TAB{P1P1}.
Moreover, since the algebraic torus $\T^1$ embeds into $\P^1\setminus\pi_i(\Lambda)$
\st $\autc(\T^1)$ extends to a subgroup of 
$\pi_{i}(\autc(X))$ for $i\in\{1,2\}$,
we deduce that $X_N$ must be toric.

The bidegree $(2,2)$ forms define an isomorphism $\P^1\times\P^1\to\Ys\subset\P^8$.
Since $X_N$ is an anticanonical model,
the bidegree $(2,2)$ forms that pass through the blowup center $\Lambda$,
define a birational map $\P^1\times\P^1\dto X_N\subset\P^{r}$ for some $n+1\leq r\leq 8$.
Assigning linear conditions to the forms, so that they pass through $\Lambda$, 
corresponds to a linear projection $f\c\Ys\subset\P^8\dto X_N\subset\P^r$.
It follows from the definition of linear normalization that 
there exists a degree preserving linear projection $g\c X_N\to X$.
We now define $\rho\c\Ys\dto X$ as the composition of $f$ with $g$.
Notice that $\Ys\cong\P^1\times\P^1$ and thus we may interpret the blowup center
$\Lambda$ instead as the center of projection of $\rho$.
It follows from \THM{circle}c that the fibers that contain
two points in $\Lambda$ are contracted via $\rho$ to an isolated singularity of $X$.

We assume \Wlog that the generators $\l_0$ and $\l_1$ as defined at \NTN{N} 
are the classes of the pullbacks to $\X$ of the fibers of $\pi_1$ and $\pi_2$, \resp.
The generators $\p_1$, $\p_2$, $\p_3$ and $\p_4$ 
are the classes of the pullbacks of $(-1)$-curves that contract to
the points $p$, $\overline{p}$, $q$ and $\overline{q}$, \resp.
For each configuration of $\Lambda$ we obtain an explicit description of~$B(X)$.
For example, if $\Lambda$ is as in \TAB{P1P1}f, then 
$b_1\in B(X)$ since  $q$ is infinitely near to $p$, 
and $\underline{b_{12}} \in B(X)$ since $p$ and $\overline{p}$ lie in a real fiber of $\pi_2$,
and so on.
Notice that $\bS(X_N)$ 
corresponds to the Dynkin diagram 
with vertex set $B(X)$ and edge set $\set{(a,b)}{a\cdot b>0}$. 
We find that $|B(X)|>3$ in each case and thus
the values at the $\bT(X)$, $\bS(X_N)$ and name columns
are a direct consequence of \THM{circle}a.

If any two choices for the blowup center $\Lambda$
are characterized by the same row of \TAB{P1P1},
then these choices are equivalent up to $\aut(\P^1\times\P^1)$. 
It follows that the linear normalization $X_N\subset \P^r$
is up to $\aut(\P^r)$ uniquely determined by the name of the celestial surface $X$
and thus $\bL(X)$ is up to equivalence determined by
this name as well. We now apply \PRP{L} and
find by comparing \TAB{P1P1} with \TAB{L},
that each family of circles on $X$ is realized by some toric family.
\end{proof}


\newpage
\begin{remark}
\textrm{\bf(toric projections of the double Segre surface)}
\label{rmk:proj}
\\
Recall that a lattice polygon in \TAB{L} defines, 
up to projective isomorphism, 
a monomial parametrization of the linear normalization of a toric celestial surface. 
The inclusion of lattice polygons with the same unimodular involution,
defines an arrow reversing projection between the corresponding toric models.
The corresponding \df{toric projection} is defined by omitting
components of the monomial parametrization associated to the bigger lattice polygon
\st the exponents of the remaining components define the 
lattice points of the smaller lattice polygon.
Thus toric surfaces with lattice types 
b, c, e, f, g in \TAB{L} are toric projections of 
b, a, b, a, a in \TAB{L2}, \resp.
We will use this concept in \EXM{proj-dP6}, \EXM{spindle-horn}
and in the proof of \LEM{M1}.
\END
\end{remark}

\begin{example}
\textrm{\bf(dP6 as the image of a toric projection)}
\label{exm:proj-dP6}
\\
Suppose that $\Ys\subset\P^8$ and $Z\subset\P^6$ have 
lattice types as in \TAB{L2}b and  \TAB{L}b, \resp.
Thus the real structure of $\Ys$ is defined by $\sigma_2$ in \LEM{invo-P8}.
We use the left coordinates of \TAB{coord} and omit the monomial components corresponding to $y_5$ and $y_6$ coordinates \st
\[
\begin{array}{l@{~}l@{~}c@{\colon}c@{\colon}c@{\colon}c@{\colon}c@{\colon}c@{\colon}c@{~}l}
\xi_b\c\T^2\to Z\subset\P^6,\quad (s,u)\mapsto & ( & 1 & s & s^{-1} & u & u^{-1} & su^{-1} & s^{-1}u & ) =\\
                                               & ( & y_0 & y_1 & y_2 & y_3 & y_4 &  y_7 & y_8 & ).
\end{array}
\]
Let the projective isomorphism $\mu_2\c\P^6\to\P^6$ be 
a restriction of $\mu_2$ as defined in \LEM{invo-P8}.
We find that $X:=\mu_2(Z)$ is contained in $\S^5$ and has celestial type $(3,6,5)$ (see \FIG{dP6}).
The center of the linear projection $\rho\c\P^8\dto\P^6$
is a line that intersects $\Ys$ transversaly in
$p=(0:0:0:0:0:1:0:0:0)$ and its complex conjugate $\overline{p}=\sigma_2(p)=(0:0:0:0:0:0:1:0:0)$.
We remark that $\sigma_2\c\Ys\to\Ys$ is inner automorphic to $\sigma_0$ via $\aut(\Ys)$ by \COR{L}a.
The projection $\rho$ realizes a blowup of $\P^1\times\P^1\cong\Ys$ with centers $p$ and $\overline{p}$
as in \TAB{P1P1}b.
Notice that
$
\autc(Z)\cong\set{ \varphi\in\autc(\P^1\times\P^1) 
}{ 
\varphi(p)=p,~ \varphi(\overline{p})=\overline{p}}
$, since $\rho$ is an isomorphism almost everywhere.
Recall that $\autc(\P^1\times\P^1)\cong\autc(\P^1)\times\autc(\P^1)$ by \LEM{autP1P1}
and thus $\pi_1(\autc(Z))\cong\pi_2(\autc(Z))\cong \PSO(2)$ as it is stated in \TAB{P1P1}b.
\END
\end{example}


\section{Invariant quadratic forms on \texorpdfstring{$\P^1\times\P^1$}{P1xP1}}
\label{sec:pair}

In this section we reformulate 
the problem of classifying M\"obius automorphism groups of celestial surfaces,
into the problem of finding invariant quadratic forms 
of given signature in a vector space.

Suppose that $Y\subset\P^m$ is a surface \st $\aut(Y)\subset \aut(\P^m)$. 
For example, $Y\subset\P^8$ is the double Segre surface or $Y\subset\P^5$ is the Veronese surface.
Suppose that we have a birational linear projection with $m\geq n+1\geq 3$:
\[
\rho\c Y\subset\P^m\dto X\subset\S^n\subset\P^{n+1}.
\]
The \df{M\"obius pair} of $X$ \wrt $\rho$ is defined as 
\[
(Y,Q)\quad\text{where}\quad Q\subset \P^m\quad\text{is the Zariski closure of}\quad \rho^{-1}(\S^n).
\]
Notice that $Q$ is a hyperquadric of signature $(1,n+1)$ 
\st $Y\subset Q$ and \st the singular locus of $Q$ coincides with the 
center of the linear projection~$\rho$.
We define the following equivalence relation on M\"obius pairs:
\[
(Y,Q)\sim (Y,Q')~~:\Leftrightarrow~~
\exists\,\varphi\in\aut(\P^m)\c~~ \varphi(Y)=Y \quad\text{and}\quad \varphi(Q)=Q'.
\]
Suppose that $G\subseteq \autc(Y)$ is a subgroup. 
The vector space of \df{$G$-invariant quadratic forms} in the ideal $I(Y)$ of $Y$
is defined as
\[
I_2^G(Y):=\bas{ q\in I_2(Y) ~|~ q\circ \varphi = q \text{ for all } \varphi\in G }_\C,
\]
where we assume that $\varphi\in G\subset \PSL(m+1)$ is normalized to have determinant one.
Notice that the real structure $\sigma\c Y\to Y$ induces an
antiholomorphic involution on $I_2^G(Y)$.
We denote the \df{zeroset} of a form $q\in I(Y)$ by $V(q)$. 

\begin{proposition}
\textrm{\bf(properties of M\"obius pairs)}
\label{prp:moeb}
\\
Let $(Y,Q)$ and $(Y,Q')$ 
be the M\"obius pairs of surfaces
$X\subset\S^n$ and $X'\subset\S^n$, \resp.
\begin{Mlist}
\item[\bf a)]
There exists $\alpha\in\aut(\S^n)$ with $\alpha(X)=X'$
\Iff
$(Y,Q)$ and $(Y,Q')$ are equivalent.
In particular, we have that
\[
\bM(X)\cong\set{\varphi\in\autc(\P^m)}{\varphi(Y)=Y \text{ and } \varphi(Q)=Q}. 
\]

\item[\bf b)]
The subgroup $G\subseteq \autc(Y)$ is isomorphic to a subgroup of $\bM(X)$
\Iff $Q=V(q)$ for some $q\in I_2^G(Y)$.

\item[\bf c)]
If $G,G'\subset\autc(Y)$ are inner automorphic subgroups 
and $q\in I_2^{G}(Y)$, then there exists $q'\in I_2^{G'}(Y)$ \st $(Y,V(q))$ and $(Y,V(q'))$ 
are equivalent as M\"obius pairs.

\end{Mlist}
\end{proposition}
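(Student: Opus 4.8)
The plan is to derive everything from part~(a); parts~(b) and~(c) then follow essentially formally.

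\emph{Part (a).} For the direction ``$\Leftarrow$'', given $\varphi\in\aut(\P^m)$ with $\varphi(Y)=Y$ and $\varphi(Q)=Q'$, I would argue by descent: a linear automorphism carrying $Q$ to $Q'$ carries $\operatorname{Sing}(Q)$, which is the center of $\rho$, onto $\operatorname{Sing}(Q')$, the center of $\rho'$, so $\varphi$ descends to $\bar\varphi\in\aut(\P^{n+1})$ with $\rho'\circ\varphi=\bar\varphi\circ\rho$; since $Q$ (resp.\ $Q'$) is the cone over $\S^n$ with vertex the center of $\rho$ (resp.\ $\rho'$), the map $\bar\varphi$ restricts to $\alpha:=\bar\varphi|_{\S^n}\in\aut(\S^n)$, and then $\alpha(X)=\overline{\bar\varphi(\rho(Y))}=\overline{\rho'(\varphi(Y))}=X'$. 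For ``$\Rightarrow$'', I would extend $\alpha$ to a linear $\bar\alpha\in\aut(\P^{n+1})$ and note that $\bar\alpha\circ\rho\colon Y\dto X'$ is again a birational linear projection, with M\"obius quadric still equal to $Q$ because $\bar\alpha^{-1}(\S^n)=\S^n$. Thus $(Y,Q)$ and $(Y,Q')$ are both M\"obius pairs of $X'$, and it remains to show that any two birational linear projections of $Y$ onto $X'$ give $\sim$-equivalent pairs. The two centers are the singular loci of the respective M\"obius quadrics; by the rigidity in \PRP{blowup} (the blow-up center $\Lambda\subseteq\P^1\times\P^1\cong Y$ producing $X'_N$ is determined up to $\aut(\P^1\times\P^1)=\aut(Y)$ by the name of $X'$; for $\lambda=\infty$ use the Veronese model $\Yc$ instead), these centers are matched by some $\varphi_0\in\aut(Y)\subset\aut(\P^m)$. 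After applying $\varphi_0$ one is reduced to two projections from a common center, which differ only by an automorphism $\beta$ of the target $\P^{n+1}$ with $\beta(X')=X'$; provided $\beta$ preserves $\S^n$, the two M\"obius quadrics coincide and $\varphi_0$ witnesses $(Y,Q)\sim(Y,Q')$. I expect the main obstacle to be precisely this final point --- that the copy of $\S^n$ recovered from a birational linear projection onto $X'$ is intrinsic --- and it is here that \PRP{blowup} and, for the exceptional celestial types, the explicit coordinate models of \SEC{segre} are needed.

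\emph{The ``in particular'' and Part (b).} Specializing to $X'=X$, $\rho'=\rho$, the descent $\varphi\mapsto\bar\varphi|_{\S^n}$ is a group homomorphism from $\{\varphi\in\aut(\P^m):\varphi(Y)=Y,\ \varphi(Q)=Q\}$ to $\{\alpha\in\aut(\S^n):\alpha(X)=X\}$; it is injective, since a $\varphi$ descending to the identity fixes the generic point of $Y$ (the fiber of $\rho$ through it is one point), hence fixes $Y$ pointwise and therefore is the identity on $\P^m$; and it is surjective, since a M\"obius transformation fixing $X$ lifts, via the smooth model $\X$ and \PRP{blowup}, to an element of $\aut(Y)$ fixing the center of $\rho$, which thus fixes $Q$. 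Restricting to identity components gives $\bM(X)\cong\{\varphi\in\autc(\P^m):\varphi(Y)=Y,\ \varphi(Q)=Q\}$. For~(b): if $Q=V(q)$ with $q\in I_2^G(Y)$, then every $\varphi\in G\subseteq\autc(Y)\subseteq\autc(\P^m)$ fixes $Y$ and satisfies $\varphi(Q)=V(q\circ\varphi^{-1})=V(q)=Q$, so $G$ is a subgroup of $\bM(X)$ by~(a). Conversely, assume $G$ is a subgroup of $\bM(X)=\{\varphi\in\autc(\P^m):\varphi(Y)=Y,\ \varphi(Q)=Q\}$ (the general ``isomorphic to a subgroup'' formulation reduces to this, $G$ being then conjugate into $\bM(X)$ inside $\autc(Y)$); since $Q$ has signature $(1,n+1)$, its defining form $q_0\in I_2(Y)$ has rank $n+2\geq 3$, hence is irreducible and unique up to a scalar, so $G$ acts on the line $\C q_0$ through a character $\chi\colon G\to\C^*$; a computation with determinant-one lifts, using that the elements of $G$ descend to $\autc(\S^n)$ which preserves the M\"obius form, shows $\chi$ takes values in roots of unity, so $\chi\equiv 1$ by connectedness and $q_0\in I_2^G(Y)$.

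\emph{Part (c).} This I would prove directly. Writing $G'=\psi G\psi^{-1}$ with $\psi\in\aut(Y)\subseteq\aut(\P^m)$, set $q':=q\circ\psi^{-1}$; then $q'\in I_2(Y)$ because $\psi(Y)=Y$, and for $\varphi'=\psi\varphi\psi^{-1}\in G'$ one computes $q'\circ\varphi'=q\circ\varphi\circ\psi^{-1}=q\circ\psi^{-1}=q'$, so $q'\in I_2^{G'}(Y)$; finally $\psi(Y)=Y$ and $\psi(V(q))=V(q\circ\psi^{-1})=V(q')$, so $\psi$ realizes $(Y,V(q))\sim(Y,V(q'))$.
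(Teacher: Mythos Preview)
Your parts~(b), (c), and the $\Leftarrow$ direction of~(a) are correct and essentially match the paper; in~(b) you are actually more careful than the paper about the passage from $\varphi(Q)=Q$ to $q\circ\varphi=q$, supplying a connectedness/character argument that the paper leaves implicit.

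The gap is in the $\Rightarrow$ direction of~(a). You reduce to showing that any two M\"obius pairs of the same $X'$ are $\sim$-equivalent, and correctly flag the obstruction: after matching centers via some $\varphi_0\in\aut(Y)$, the residual $\beta\in\aut(\P^{n+1})$ fixes $X'$ but need not fix $\S^n$, and since $X'$ lies on many quadrics this is not automatic. You leave this unresolved, proposing to fall back on \PRP{blowup} and explicit coordinates, which would make the argument case-by-case and tie it to the specific models $\Ys$, $\Yc$. The paper avoids the detour entirely by constructing $\varphi$ differently: rather than first matching centers and then hoping the leftover $\beta$ is M\"obius, it lifts $\alpha$ through the smooth models $\X\to\X'$ to an automorphism $\varphi\in\aut(Y)\subset\aut(\P^m)$ satisfying $\rho'\circ\varphi=\alpha\circ\rho$ on $Y$ by construction. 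Since $Y$ spans $\P^m$ and all the maps involved are restrictions of linear maps, this identity extends to all of $\P^m$, whence $\varphi(Q)=\overline{(\rho')^{-1}(\alpha(\S^n))}=Q'$ follows immediately. The move you missed is to build $\varphi$ so that it intertwines $\rho,\rho'$ with $\alpha$ from the outset.
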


\newpage
\begin{proof}
a)
Let $\rho\c\P^m\dto\P^{n+1}$ 
be a birational linear projection \st $\rho(Q)=\S^n$ and $\rho(Y)=X$. 
Similarly, let $\rho'\c\P^m\dto\P^{n+1}$ be \st 
$\rho'(Q')=\S^n$ and $\rho'(Y)=X'$. 

$\Rightarrow$:
We show that there exists $\varphi \in \aut(\P^m)$ \st the following diagram 
commutes:
\[
\begin{tikzcd}
Y\arrow[d,"\rho"']\arrow[r,hook] & Q\arrow[d,"\rho"']\arrow[r,"\varphi"] & Q'\arrow[d,"\rho'"]\arrow[r,hookleftarrow] & Y\arrow[d,"\rho'"]\\ 
X\arrow[r,hook] & \S^n\arrow[r,"\alpha"] & \S^n\arrow[r,hookleftarrow] & X'
\end{tikzcd}
\]
If $m=n+1$, then $\rho$ and $\rho'$ are projective isomorphisms and the claim follows immediately.
If $m>n+1$, then the centers of the linear projections $\rho$ and~$\rho'$ coincide
with the singular loci $\bS(Q)$ and $\bS(Q')$ of $Q$ and $Q'$, \resp. 
Let $\Lambda,\Lambda'\subset Y$ be the centers of the projections $\rho|_Y$ and $\rho'|_Y$, \resp.
The linear isomorphism $\alpha$ induces via the projections $\rho$ and $\rho'$ the
algebraic isomorphisms
$\varphi|_Q\c Q\setminus\bS(Q)\to Q'\setminus\bS(Q')$
and
$\varphi|_Y\c Y\setminus\Lambda\to Y\setminus\Lambda'$.
The automorphism $\alpha$ leaves the union of exceptional curves that contract to points in $\Lambda$ invariant
and thus we can extend $\varphi|_Y$ so that $\varphi\in\autc(Y)$ and $\varphi(\Lambda)=\Lambda'$.
Since $\aut(Y)\subset\aut(\P^m)$ 
and since  $Q$ contains $\rho^{-1}(\S^n)$
by assumption, we find that 
$\varphi\in\aut(\P^m)$ \st $\varphi(Q)=Q'$ and $\varphi(Y)=Y$ as was to be shown.

$\Leftarrow$:
For the converse, we need to show that for given $\varphi \in \aut(\P^m)$ there exists $\alpha\in\aut(\S^n)$ \st the above diagram commutes.
This is immediate, since we define $\alpha$ as the composition $\rho'|_{Q'}\circ \varphi\circ(\rho|_Q)^{-1}$.

The remaining assertion follows if we set $Q':=Q$ and $X':=X$ in the above diagram.

b)
We first show the $\Leftarrow$ direction. By assumption $q\circ\varphi=q$ for all $\varphi\in G$.
Since 
$
\varphi^{-1}(V(q))
=
\set{\varphi^{-1}(x)\in\P^m}{ q(x)=0}
=
V(q\circ\varphi)
$
we find that $G\subseteq \set{\varphi\in\autc(\P^m)}{\varphi(Y)=Y \text{ and } \varphi(Q)=Q}$. 
It now follows from  a) that $G$ embeds as a subgroup into $\bM(X)$.
For the $\Rightarrow$ direction we again apply the characterization of $\bM(X)$ in a)
and find that $\varphi^{-1}(Q)=Q$ and thus $q\circ\varphi=q$ for all $\varphi\in G$ so that $q\in I_2^G(Y)$.

c)
Since $q\in I_2^G(Y)$ the following holds for all $\varphi\in G$ and $\alpha\in\aut(Y)$:
\[
q\circ \varphi=q 
~~\Leftrightarrow~~
q\circ \varphi\circ\alpha =q\circ\alpha
~~\Leftrightarrow~~
q\circ\alpha\circ\alpha^{-1}\circ\varphi\circ\alpha =q\circ\alpha
.
\]
By assumption $G'=\alpha^{-1}\circ G\circ \alpha$ for some $\alpha\in\aut(Y)$
and thus for all $\varphi'\in G'$ there exists $\varphi\in G$
\st $\varphi'=\alpha^{-1}\circ\varphi\circ\alpha$.
It follows that $q'\circ\varphi'=q'$ for all $\varphi'\in G'$, 
where $q':=q\circ\alpha$ so that $q'\in I_2^{G'}(Y)$. 
Thus 
\[
\alpha^{-1}(V(q))
=\set{\alpha^{-1}(x)\in\P^m}{ q(x)=0}
=V(q\circ\alpha)
=V(q')
,
\]
so that $(Y,V(q))$ is equivalent to $(Y,V(q'))$.
\end{proof}

The following theorem is essentially 
\citep[Theorem~2.5]{sch10} and allows us to compute $G$-invariant 
quadratic forms via the \df{Lie algebra} $\lie(G)$ of $G$.

\begin{theoremext}
{\bf [DeGraaf-P{\'{\i}}lnikov{\'a}-Schicho, 2009]}
\label{thm:D}
\\
Suppose that $Y\subset\P^{m-1}$ is a variety \st $\autc(Y)\subset \autc(\P^{m-1})$.
Let the 1-parameter subgroup $H\subset \autc(Y)$
be represented by an $m\times m$ matrix whose entries are smooth functions
in the parameter $\alpha$ \st $\det H(\alpha)=1$ for all $\alpha$
and \st $H(0)$ is the identity matrix.
Let the $m\times m$ matrix~$D$ in $\lie(H)$ be the tangent vector $(\partial_\alpha H)(0)$ of $H$ at the identity.
Then the $H$-invariant quadratic forms are
\begin{equation}
I_2^H(Y)=\bas{ q_A\in I_2(Y) ~|~ D^T\cdot A + A\cdot D=0}_\C,
\end{equation}
where $q_A$ denotes the quadratic form $x^\top\cdot A\cdot x$ associated to 
the symmetric $m\times m$ matrix $A$.
\end{theoremext}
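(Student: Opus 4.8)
The plan is to reduce the statement to elementary facts about the matrix flow of $H$; the variety $Y$ enters only to guarantee that $H$ acts on $\P^{m-1}$ by genuine linear substitutions, so that $q_A\circ\varphi$ is again a quadratic form. Write $q_A(x)=x^\top A x$ with $A$ symmetric as in the statement. For the given determinant-one matrix $H(\alpha)$ one has $q_A\circ H(\alpha)=q_{H(\alpha)^\top A\,H(\alpha)}$, and since both $A$ and $H(\alpha)^\top A\,H(\alpha)$ are symmetric, the polynomial identity $q_A\circ H(\alpha)=q_A$ is equivalent to the matrix identity $H(\alpha)^\top A\,H(\alpha)=A$. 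Hence $q_A\in I_2^H(Y)$ if and only if $q_A\in I_2(Y)$ and $H(\alpha)^\top A\,H(\alpha)=A$ holds identically in $\alpha$.

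The forward inclusion is then immediate: differentiating the identity $H(\alpha)^\top A\,H(\alpha)=A$ at $\alpha=0$ and using $(\partial_\alpha H)(0)=D$ gives $D^\top A+A D=0$. For the converse I would use that $H$ is a one-parameter subgroup, so $H(\alpha+\beta)=H(\alpha)H(\beta)$ and therefore $(\partial_\alpha H)(\alpha)=H(\alpha)D$ (equivalently $H(\alpha)=\exp(\alpha D)$, with $\operatorname{tr}D=0$ forced by $\det H\equiv 1$). Setting $f(\alpha):=H(\alpha)^\top A\,H(\alpha)$, a one-line computation yields the linear matrix differential equation $f'(\alpha)=D^\top f(\alpha)+f(\alpha)D$ with initial value $f(0)=A$. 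When $D^\top A+AD=0$, the constant function $\alpha\mapsto A$ solves the very same equation with the same initial value, so uniqueness of solutions of linear ODEs forces $f\equiv A$; that is, $q_A$ is $H$-invariant. Because $\set{q_A\in I_2(Y)}{D^\top A+AD=0}$ is already a $\C$-linear subspace, these two inclusions give the description of $I_2^H(Y)$ as its span, which is the assertion.

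All the computation is routine and the argument is in essence that of \citep[Theorem~2.5]{sch10}. The only real subtlety lies in the converse direction: the vanishing of the derivative at the identity does not by itself give invariance, and one genuinely needs the one-parameter subgroup structure --- so that $H(\alpha)=\exp(\alpha D)$ and the congruence flow satisfies the closed linear ODE above --- in order to conclude via uniqueness that the orbit of $A$ is constant. The passage between polynomial and matrix identities, and the reduction using the determinant-one normalization, are the other points to state carefully but are straightforward.
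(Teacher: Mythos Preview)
Your proposal is correct and follows essentially the same route as the paper: reduce $H$-invariance of $q_A$ to the matrix identity $H(\alpha)^\top A\,H(\alpha)=A$, differentiate at $\alpha=0$ for the forward inclusion, and use $H(\alpha)=\exp(\alpha D)$ for the converse. Your converse argument via the linear ODE $f'=D^\top f+fD$ and uniqueness is in fact cleaner than the paper's version, which carries out an ad~hoc trick (multiplying by $\exp(\alpha)$ before differentiating) to reach the same conclusion.
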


\begin{proof}
We observe that
$I_2^H(Y)=\bas{ q_A\in I_2(Y) ~|~ H^\top\cdot A\cdot H = A }_\C$.
Let us first assume that $H^\top\cdot A\cdot H = A$. 
We differentiate both sides of the equation \wrt $\alpha$
and evaluate at $0$ so that we obtain the necessary condition $D^\top\cdot A + A\cdot D=0$.
For the converse we assume that $D^\top\cdot A + A\cdot D=0$.
Thus $G^\top\cdot A\cdot G = B$ for some matrix $B$ where $G=\exp(\alpha D)$ is a one-parameter subgroup. 
We differentiate both sides of the equivalent equation $G^\top\cdot A\cdot G\cdot \exp(\alpha) = B\cdot \exp(\alpha)$ 
\wrt $\alpha$ and evaluate at $0$ so that we obtain $D^\top\cdot A + A\cdot D + A = B$.
It follows that $A=B$ and we know from Lie theory that $H=G$ so that $H^\top\cdot A\cdot H = A$ as is required.
\end{proof}

\begin{remark}
\textrm{\bf(goal)}
\label{rmk:goal}
Our goal is to classify subgroups 
$G\subseteq \autc(Y)$ up to inner automorphism
\st $\dim G\geq 2$
and $I_2^G(Y)$ contains quadratic forms~$q$
of signature $(1,n+1)$ with $n\geq 3$.
It follows from \PRP{moeb} that 
the M\"obius pairs $(Y,V(q))$ for such $q$, 
correspond to the celestial surfaces $X\subset\S^n$ \st $G$ is isomorphic to 
a subgroup of $\bM(X)$.
\END
\end{remark}

\section{Automorphisms of \texorpdfstring{$\P^1\times\P^1$}{P1xP1}}
\label{sec:lie}

Motivated by \RMK{goal} with $Y$ the double Segre surface $\Ys\cong\P^1\times\P^1$, 
we would like to classify Lie subgroups of $\autc(\P^1\times\P^1)$ up to inner automorphism.
By \THM{D}, it is sufficient to classify Lie subalgebras of $\Msl_2\oplus\Msl_2$.

Let us first investigate real structures of $\Msl_2\oplus\Msl_2$.
Consider the toric involutions $\sigma_i\c\T^2\to\T^2$ 
in \LEM{invo-T2} with $0\leq i\leq 3$.
By \LEM{invo-P8}, these toric involutions induce involutions
on related algebraic structures (recall \NTN{nota}): 
\quad
$\sigma_i\c\P^1\times\P^1\to\P^1\times\P^1$,
\quad
$\sigma_i\c\autc(\P^1\times\P^1)\to\autc(\P^1\times\P^1)$
\quad and \quad
$\sigma_i\c\Msl_2\oplus\Msl_2\to\Msl_2\oplus\Msl_2$. 


\begin{lemma}
\textrm{\bf(real structures for $\Msl_2\oplus\Msl_2$)}
\label{lem:invo-sl2sl2}
\\
If
{\footnotesize%
\[
m:=
\left(
\begin{bmatrix}
a&b\\c&d 
\end{bmatrix} 
,
\begin{bmatrix}
e&f\\g&h 
\end{bmatrix} 
\right)\in \Msl_2\oplus\Msl_2,
\]}%
then
{\footnotesize%
\begin{gather*}
\sigma_0(m)=
\left(
\begin{bmatrix}
\overline{a}&\overline{b}\\
\overline{c}&\overline{d} 
\end{bmatrix} 
,
\begin{bmatrix}
\overline{e}&\overline{f}
\\
\overline{g}&\overline{h} 
\end{bmatrix} 
\right)
,\quad
\sigma_1(m)=
\left(
\begin{bmatrix}
\overline{d}&\overline{c}\\
\overline{b}&\overline{a} 
\end{bmatrix} 
,
\begin{bmatrix}
\overline{e}&\overline{f}
\\
\overline{g}&\overline{h} 
\end{bmatrix} 
\right)
\\[2mm]
\sigma_2(m)=
\left(
\begin{bmatrix}
\overline{d}&\overline{c}\\
\overline{b}&\overline{a} 
\end{bmatrix} 
,
\begin{bmatrix}
\overline{h}&\overline{g}
\\
\overline{f}&\overline{e} 
\end{bmatrix} 
\right)
,\quad
\sigma_3(m)=
\left(
\begin{bmatrix}
\overline{e}&\overline{f}
\\
\overline{g}&\overline{h} 
\end{bmatrix} 
,
\begin{bmatrix}
\overline{a}&\overline{b}\\
\overline{c}&\overline{d} 
\end{bmatrix} 
\right),
\end{gather*}}%
where $\sigma_0$, $\sigma_1$, $\sigma_2$ and $\sigma_3$
are real structures $\Msl_2\oplus\Msl_2\to\Msl_2\oplus\Msl_2$
induced by the corresponding involutions in \LEM{invo-T2} and \LEM{invo-P8}.
\end{lemma}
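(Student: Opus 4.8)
The plan is to compute the induced involution by functoriality and then pass to Lie algebras. If $\sigma\c V\to V$ is an antiholomorphic involution of a variety, then $\varphi\mapsto\sigma\circ\varphi\circ\sigma$ is an automorphism of $\autc(V)$ --- the composite of an antiholomorphic, a holomorphic and an antiholomorphic map is holomorphic --- and this is precisely the involution denoted $\sigma$ on $\autc(V)$ in \NTN{nota}; its differential at the identity is the conjugate-linear involution of $\lie(\autc(V))$ that we must make explicit. Taking $V=\P^1\times\P^1$ and using \LEM{autP1P1} to identify $\lie(\autc(\P^1\times\P^1))\cong\Msl_2\oplus\Msl_2$, the statement reduces to three steps: (i) write each $\sigma_i$ in the coordinates fixed by $\tilde{\xi}$; (ii) compute how conjugation by $\sigma_i$ acts on a pair of $2\times2$ blocks; (iii) differentiate.

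First I would record, using \LEM{invo-T2}, \LEM{invo-P8} and the monomial parametrization $\tilde{\xi}$, that for $i\in\{0,1,2\}$ the involution $\sigma_i$ is a product $\tau\times\tau'$ of antiholomorphic involutions of the two $\P^1$ factors, where each of $\tau,\tau'$ is either $\tau_+\c(x:y)\mapsto(\overline{x}:\overline{y})$ or $\tau_\bullet\c(x:y)\mapsto(\overline{y}:\overline{x})$ --- these restrict on the torus factor $\C^*$ to $s\mapsto\overline{s}$ and $s\mapsto1/\overline{s}$, \resp --- with $\sigma_0=\tau_+\times\tau_+$, $\sigma_1=\tau_\bullet\times\tau_+$ and $\sigma_2=\tau_\bullet\times\tau_\bullet$; while $\sigma_3$ is the twisted swap $(p;q)\mapsto(\tau_+(q);\tau_+(p))$. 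Conjugation by a product $\tau\times\tau'$ splits over the two factors, so by \LEM{autP1P1} the induced Lie-algebra involution of $\Msl_2\oplus\Msl_2$ is $(M,N)\mapsto(\delta M,\delta'N)$, where $\delta$ is the involution of $\Msl_2$ induced by $\tau$; for $\sigma_3$ one gets $(M,N)\mapsto(\delta_+N,\delta_+M)$ instead.

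It then remains to evaluate $\delta_+$ and $\delta_\bullet$, a one-line M\"obius computation each. Writing a complex automorphism of $\P^1$ as $z\mapsto(az+b)/(cz+d)$, one checks that $\varphi\mapsto\tau_+\circ\varphi\circ\tau_+$ is represented by entrywise conjugation of $\left[\begin{smallmatrix}a&b\\c&d\end{smallmatrix}\right]$, and $\varphi\mapsto\tau_\bullet\circ\varphi\circ\tau_\bullet$ by $\left[\begin{smallmatrix}a&b\\c&d\end{smallmatrix}\right]\mapsto\left[\begin{smallmatrix}\overline{d}&\overline{c}\\\overline{b}&\overline{a}\end{smallmatrix}\right]$, that is, reflection across the antidiagonal composed with conjugation. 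Both maps are conjugate-linear in the matrix entries, hence coincide with their own differentials, so $\delta_+$ and $\delta_\bullet$ are given on $\Msl_2$ by these same two formulas. Substituting into the four cases of the previous paragraph produces exactly the displayed expressions for $\sigma_0,\sigma_1,\sigma_2,\sigma_3$.

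The argument is essentially bookkeeping; the one point that needs genuine care is fixing conventions consistently. In particular, the torus involution $s\mapsto1/\overline{s}$ has an entire circle of fixed points, so by \LEM{invo-P1P1}a it extends to a $\sigma_+$-type (not $\sigma_-$-type) involution $\tau_\bullet$ of $\P^1$, and the induced involution of $\Msl_2$ is the antidiagonal transpose $\left[\begin{smallmatrix}a&b\\c&d\end{smallmatrix}\right]\mapsto\left[\begin{smallmatrix}\overline{d}&\overline{c}\\\overline{b}&\overline{a}\end{smallmatrix}\right]$ and not $\left[\begin{smallmatrix}\overline{d}&-\overline{c}\\-\overline{b}&\overline{a}\end{smallmatrix}\right]$; one should also check that extending $\sigma_i$ from $\T^2$ to $\P^1\times\P^1$ through $\tilde{\xi}$ agrees with the matrices $\sigma_i$ of \LEM{invo-P8}, which is immediate from the monomials.
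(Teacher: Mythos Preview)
Your argument is correct and takes a genuinely different route from the paper. The paper lifts everything to $\P^8$: it represents a one-parameter subgroup $M\subset\autc(\P^1\times\P^1)$ by the $9\times9$ matrix $\cS(M)$ of \EQN{S}, writes each $\sigma_i\c\P^8\to\P^8$ as a permutation matrix $L_i$ from \LEM{invo-P8}, and then verifies the identity $\overline{L_i^{-1}\cdot\cS(M)\cdot L_i}=\cS(\sigma_i(M))$ symbolically (with the entries of $M$ as indeterminates), concluding that the action on the tangent vector $m$ agrees with the displayed formulas. You instead stay on $\P^1\times\P^1$ throughout: you factor $\sigma_i$ into its action on the two $\P^1$ components, reduce the computation to two $2\times2$ M\"obius calculations for $\tau_+$ and $\tau_\bullet$, and assemble the answer case by case. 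Your approach is more elementary and transparent---no $9\times9$ symbolic computation is needed---while the paper's has the advantage of tying the result directly to the $\P^8$ coordinates of \LEM{invo-P8}, which is how the real structures are actually used downstream in \LEM{iqf}. Both are sound; your care in distinguishing $\tau_\bullet$ from a $\sigma_-$-type involution is exactly the convention check that matters.
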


\begin{proof}
Suppose that $M\subset\autc(\P^1\times\P^1)$ is a 1-parameter subgroup
such that $m$ is the tangent vector of $M$ at the identity.
The 1-parameter subgroup $\sigma_i(M)$ has tangent vector 
$\sigma_i(m)$ for all $0\leq i\leq 3$.
We compute the representation $\cS(M) \in\aut(\P^8)$ using \EQN{S},
where the entries of $M$ are set as indeterminates.
Let $L_i$ denote the $9\times 9$ permutation matrix
corresponding to the induced antiholomorphic involution
$\sigma_i\c\P^8\to\P^8$
as stated in \LEM{invo-P8}.
It is immediate to verify that
$\overline{L_i^{-1}\circ\cS(M)\circ L_i}=\cS(\sigma_i(M))$,
where $\sigma_i$ acts on $\autc(\P^1\times\P^1)$ as an 
involution and $\overline{\,\cdot\,}$ denotes complex conjugation. 
We conclude that 
the lemma holds, since the action of~$\sigma_i$ on~$m$ is the same as the action 
of~$\sigma_i$ on~$M$.
\end{proof}

\newpage
\begin{remark}
The real structure $\sigma_2\c\Msl_2\oplus\Msl_2\to\Msl_2\oplus\Msl_2$
is inner automorphic to
$\sigma_H:=\alpha\circ\sigma_2\circ \alpha^{-1}$, where
\[
\alpha
:=
\begin{bmatrix}
 \Mi &  0\\ 
 0 & -\Mi
\end{bmatrix}
\text{so that }
\sigma_H
\left(
\begin{bmatrix}
a&b\\c&d 
\end{bmatrix} 
,
\begin{bmatrix}
e&f\\g&h 
\end{bmatrix} 
\right)
=
\left(
\begin{bmatrix}
\overline{d}&-\overline{c}\\
-\overline{b}&\overline{a} 
\end{bmatrix} 
,
\begin{bmatrix}
\overline{h}&-\overline{g}
\\
-\overline{f}&\overline{e} 
\end{bmatrix} 
\right).
\]
The Lie algebra $\Msl_2\oplus\Msl_2$ with real structure $\sigma_H$
is usually denoted by $\Msu_2\oplus\Msu_2$.
The real elements in $\Msu_2$ are skew Hermitian matrices.
Similarly, $\Msl_2\oplus\Msl_2$ with real structure $\sigma_1$ 
can be identified with $\Msu_2\oplus\Msl_2(\R)$.
\END
\end{remark}

\begin{notation}
\label{ntn:lie}
We consider the following elements in $\Msl_2$:
\[
t:=
\begin{bmatrix}
0&1\\0&0 
\end{bmatrix}
,~~
q:=
\begin{bmatrix}
0&0\\1&0 
\end{bmatrix}
,~~
s:=
\begin{bmatrix}
1&0\\0&-1 
\end{bmatrix}
,~~
r:=
\begin{bmatrix}
0&-1\\1&0 
\end{bmatrix}
,~~
e:=
\begin{bmatrix}
0&0\\0&0 
\end{bmatrix}.
\]
Recall that $\Msl_2$ over the complex numbers is generated by $\bas{t,q,s}$, 
where the Lie brackets of the generators are $[t,q]=s$, $[t,s]=-2t$ and $[q,s]=2q$.
We shall denote $(g,e)\in \Msl_2\oplus\Msl_2$ by $g_1$ and 
$(e,g)\in \Msl_2\oplus\Msl_2$ by $g_2$ for all $g\in\Msl_2$. 
Notice that $\Msl_2\oplus\Msl_2=\bas{t_1,q_1,s_1, t_2,q_2,s_2}$ 
where the Lie bracket acts componentwise.~\END
\end{notation}

\begin{remark}
\label{rmk:naming}
Suppose that the real structure of $\Msl_2\oplus\Msl_2$
is defined by $\sigma_0$ in \LEM{invo-sl2sl2}.
In this case, 
\begin{gather*}
\lie(\PSE(1))=\bas{t},\qquad 
\lie(\PSX(1))=\bas{s},\qquad 
\lie(\PSO(2))=\bas{r},\qquad
\\
\lie(\PSA(1))=\bas{t,s} \qquad\text{and}\qquad
\lie(\PSL(2))=\bas{t,q,s}.
\end{gather*}
These groups correspond to 
translations, 
scalings, 
rotations, 
affine transformations, and projective transformations, \resp. 
Indeed the generators are the tangent vectors 
at the identity of the following 1-parameter subgroups:
\[
t\leftrightsquigarrow
\begin{bmatrix}
1&\alpha\\
0&1 
\end{bmatrix}
,~~
s\leftrightsquigarrow
\begin{bmatrix}
\alpha+1 &0\\0& (\alpha+1)^{-1}
\end{bmatrix}
,~~
r\leftrightsquigarrow
\begin{bmatrix}
\cos(\alpha) & -\sin(\alpha)\\
\sin(\alpha) & \cos(\alpha)
\end{bmatrix}. 
\]
Now suppose that the real structure of $\Msl_2\oplus\Msl_2$
is defined by $\sigma_2$ in \LEM{invo-sl2sl2}.
In this case $\lie(\PSO(2))=\bas{\Mi s}$, since
\[
\Mi s
=
\begin{bmatrix}
\Mi & 0\\
0 & -\Mi
\end{bmatrix} 
\leftrightsquigarrow
\begin{bmatrix}
\cos(\alpha)+\Mi\sin(\alpha) & 0\\
0 & \cos(\alpha)-\Mi\sin(\alpha)
\end{bmatrix}. 
\]
See also \EXM{toric}.
\END
\end{remark}

Suppose that $F$ is a Lie group.
We call two Lie subalgebras $\Mg,\Mh\subset\lie(F)$ 
\df{(complex) inner automorphic} if there exists (complex) $M\in F$ \st $\Mg=M^{-1}\cdot \Mh\cdot M$.
\THM{sl2+sl2} and thus \COR{sl2+sl2} follow from \cite{so4}.

\begin{theoremext}
\label{thm:sl2+sl2}
{\bf [Douglas-Repka, 2016]}  
\\
A Lie subalgebra $0\subsetneq \Mg\subseteq \Msl_2\oplus\Msl_2$
is, up to flipping the left and right factor, complex inner automorphic to either one 
of the following with $\alpha\in\C^*$:
{\footnotesize%
\begin{gather*}
\bas{t_1},~
\bas{s_1},~
\bas{t_1+t_2},~
\bas{t_1+s_2},~ 
\bas{s_1+\alpha s_2},~ 
\bas{t_1, s_1},~
\bas{t_1, t_2},~ 
\bas{t_1, s_2},~ 
\bas{s_1, s_2},~\\ 
\bas{s_1+t_2,t_1},~
\bas{t_1+t_2,s_1+s_2},~
\bas{s_1+\alpha s_2,t_1},~
\bas{t_1,q_1,s_1},~
\bas{t_1,s_1,t_2},~
\bas{t_1,s_1,s_2},~\\
\bas{s_1+\alpha s_2, t_1, t_2 },~
\bas{t_1+t_2,q_1+q_2,s_1+s_2},~
\bas{t_1,s_1,t_2,s_2},~
\bas{t_1,q_1,s_1,t_2},~
\bas{t_1,q_1,s_1,s_2},~\\
\bas{t_1,q_1,s_1,t_2,s_2},~
\bas{t_1,q_1,s_1,t_2,q_2,s_2}.
\end{gather*}}%
\end{theoremext}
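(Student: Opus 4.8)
The plan is to reprove the classification by the classical subdirect--product (Goursat) method. Inner automorphisms of $\Msl_2\oplus\Msl_2$ are realized by conjugation by $\mathrm{SL}_2(\C)\times\mathrm{SL}_2(\C)$ acting factorwise, and the flip $(m_1,m_2)\mapsto(m_2,m_1)$ is the only extra symmetry allowed. First I would recall the classical list of subalgebras of $\Msl_2$: up to $\mathrm{SL}_2(\C)$--conjugacy these are $0$, $\bas{t}$, $\bas{s}$, the Borel $\bas{t,s}$, and $\Msl_2$ itself. (A proper nonzero subalgebra is solvable, hence lies in a Borel; inside a Borel one finds $\bas{t}$, the lines $\bas{s+ct}\sim\bas{s}$, and the whole Borel; and every nonzero element is $\mathrm{SL}_2(\C)$--conjugate to a multiple of $t$ or of $s$.) Alongside this list I would record, for each of the five subalgebras, its ideals ($0$ and $\bas{t}$ inside the Borel, $0$ and $\Msl_2$ inside $\Msl_2$, and the line itself for each of the lines), together with the image in $\mathrm{Aut}$ of its stabilizer: the normalizer of $\bas{s}$ is the maximal torus extended by the Weyl group and acts on $\bas{s}$ by $s\mapsto\pm s$; the normalizer of the Borel is the Borel, which acts trivially on $\bas{t,s}/\bas{t}$; and $\mathrm{Aut}(\Msl_2)=\mathrm{Inn}(\Msl_2)$.

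Next, for an arbitrary subalgebra $\Mg\subseteq\Msl_2\oplus\Msl_2$ I would put $\Mg_i:=\pi_i(\Mg)$ and $\Mh_1:=\{x:(x,0)\in\Mg\}$, $\Mh_2:=\{y:(0,y)\in\Mg\}$, so that $\Mh_i$ is an ideal of $\Mg_i$ and $\Mg$ is the preimage in $\Mg_1\oplus\Mg_2$ of the graph of a Lie algebra isomorphism $\phi\c\Mg_1/\Mh_1\xrightarrow{\ \sim\ }\Mg_2/\Mh_2$. Conversely, every datum $(\Mg_1,\Mh_1,\Mg_2,\Mh_2,\phi)$ of this form yields a subalgebra, since the preimage of the graph subalgebra under the surjection $\Mg_1\oplus\Mg_2\twoheadrightarrow(\Mg_1/\Mh_1)\oplus(\Mg_2/\Mh_2)$ is again a subalgebra. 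Two such data give factorwise--conjugate subalgebras precisely when the pairs $(\Mg_i,\Mh_i)$ are simultaneously conjugate (for $i=1,2$) and the induced maps $\phi$ agree modulo the action of the images in $\mathrm{Aut}(\Mg_i/\Mh_i)$ of those elements of $\mathrm{SL}_2(\C)$ that normalize both $\Mg_i$ and $\Mh_i$; the flip exchanges the two triples and replaces $\phi$ by $\phi^{-1}$.

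The enumeration then runs over the abstract isomorphism type of the common quotient $\mathfrak q:=\Mg_1/\Mh_1\cong\Mg_2/\Mh_2$, which, being a quotient of a subalgebra of $\Msl_2$, is one of $0$, the abelian line, the non--abelian algebra $\mathfrak{aff}(1)$, or $\Msl_2$. For $\mathfrak q=0$ one obtains the direct sums $\Mh_1\oplus\Mh_2$ over all unordered pairs drawn from $\{0,\bas t,\bas s,\bas{t,s},\Msl_2\}$, which produces the fourteen ``uncoupled'' rows of the list. For $\mathfrak q$ the abelian line, each $(\Mg_i,\Mh_i)$ must be one of $(\bas t,0)$, $(\bas s,0)$, $(\bas{t,s},\bas t)$; pairing these two--by--two and tracking the residual rescaling and sign freedom on the one--dimensional $\mathfrak q$ yields $\bas{t_1+t_2}$, $\bas{t_1+s_2}$, $\bas{s_1+\alpha s_2}$, $\bas{s_1+t_2,t_1}$, $\bas{s_1+\alpha s_2,t_1}$, and $\bas{s_1+\alpha s_2,t_1,t_2}$, the modulus $\alpha\in\C^*$ surviving exactly when both quotient directions are toral, so that no normalizer rescales the generators. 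For $\mathfrak q=\mathfrak{aff}(1)$ one is forced to $(\Mg_i,\Mh_i)=(\bas{t,s},0)$ for both $i$, and since $\mathrm{Aut}(\mathfrak{aff}(1))$ is connected and is fully realized by the combined normalizer action, all gluings coincide and give $\bas{t_1+t_2,s_1+s_2}$. For $\mathfrak q=\Msl_2$ one is forced to $(\Mg_i,\Mh_i)=(\Msl_2,0)$ for both, and since every automorphism of $\Msl_2$ is inner, the only gluing is the diagonal $\bas{t_1+t_2,q_1+q_2,s_1+s_2}$. Finally I would verify pairwise inequivalence of the resulting $14+6+1+1=22$ families using the discrete invariants $\dim\Mg$, $\dim[\Mg,\Mg]$, the abstract isomorphism type of $\Mg$, the conjugacy classes of the pair $(\Mg_1,\Mg_2)$, and the numbers $\dim\Mh_i$, and note that within each $\alpha$--family the values $\alpha$, $-\alpha$, $\alpha^{-1}$ describe the same subalgebra.

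The main obstacle is the bookkeeping in the step where $\mathfrak q$ is one--dimensional: for each pair of ideal inclusions $\Mh_i\triangleleft\Mg_i$ one must compute correctly the image in $\mathrm{Aut}(\Mg_i/\Mh_i)$ of the subgroup of $\mathrm{SL}_2(\C)$ normalizing both $\Mg_i$ and $\Mh_i$, since this is precisely what decides whether a continuous modulus survives and whether two a priori distinct gluings coincide. In particular one must check that the presence of $t_1$ in $\bas{s_1+\alpha s_2,t_1}$ and in $\bas{s_1+\alpha s_2,t_1,t_2}$ provides no extra rescaling of $s_1$ --- the normalizer of a Borel in $\mathrm{SL}_2(\C)$ acts trivially on its abelianization --- whereas in $\bas{t_1+t_2,s_1+s_2}$ the whole of $\mathrm{Aut}(\mathfrak{aff}(1))$ is available and absorbs the parameter. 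Getting these normalizer computations right, and then matching the surviving parameters against the flip, is the crux; the remainder is the routine enumeration of the short list of subalgebras of $\Msl_2$ against itself.
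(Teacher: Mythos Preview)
The paper does not supply its own proof of this theorem: it is quoted as a result of Douglas--Repka and simply cited from \cite{so4}. Your Goursat--lemma argument is therefore not a reworking of anything in the paper but an independent proof, and it is the natural one for classifying subalgebras of a direct sum of simple Lie algebras.

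Your sketch is correct. The enumeration of subalgebras of $\Msl_2$ and of their ideals is accurate, the fibre--product formalism is set up properly, and you have correctly isolated the one genuinely delicate point: computing, for each pair $(\Mg_i,\Mh_i)$, the image of the simultaneous normalizer in $\aut(\Mg_i/\Mh_i)$. Your three key facts --- that the Borel normalizes itself and acts trivially on its abelianization, that the normalizer of the diagonal torus acts on it by $\pm1$, and that conjugation by the Borel already realizes all of $\aut(\mathfrak{aff}(1))$ --- are exactly what make the bookkeeping come out to $14+6+1+1=22$ and force the continuous modulus $\alpha$ to survive precisely in the three toral--toral couplings. One small imprecision worth noting: the redundancy $\alpha\sim\alpha^{-1}$ coming from the flip applies to $\bas{s_1+\alpha s_2}$ and $\bas{s_1+\alpha s_2,t_1,t_2}$ (which are flip--symmetric) but not to $\bas{s_1+\alpha s_2,t_1}$, since there the flip moves $t_1$ to $t_2$ and lands in a different normal form; for that family only $\alpha\sim-\alpha$ holds. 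This does not affect the list itself, which is all the theorem asserts.
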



\begin{corollary}
\label{cor:sl2+sl2}
If $\Mg\subseteq \Msl_2\oplus\Msl_2$ is a Lie subalgebra \st $\dim\Mg\geq 2$ and $\Mg$ is not complex inner automorphic to 
$\bas{s_1,s_2}$, then $\Mg$ contains a subalgebra that is complex inner automorphic to either $\bas{t_1}$, $\bas{t_2}$ or $\bas{t_1+t_2}$.
\end{corollary}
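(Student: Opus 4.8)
The plan is to read the conclusion directly off the Douglas--Repka classification \THM{sl2+sl2}. Since $\dim\Mg\geq 2$, that theorem tells us that $\Mg$ is, after applying a complex inner automorphism and possibly the automorphism $\tau$ that flips the two $\Msl_2$-summands, equal to one of the finitely many entries $\Mg_0$ of the displayed list that have dimension at least two. So it suffices to verify, for every such $\Mg_0$ with $\Mg_0\neq\bas{s_1,s_2}$, that $\Mg_0$ contains $\bas{t_1}$ or $\bas{t_1+t_2}$ as a subalgebra, and then to transport this back through the inner automorphism and through $\tau$.

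First I would carry out the (mechanical) inspection of the list. Every entry of dimension at least two other than $\bas{s_1,s_2}$ has $t_1$ or $t_1+t_2$ among its spanning vectors --- this includes the two-parameter families $\bas{s_1+\alpha s_2,t_1}$ and $\bas{s_1+\alpha s_2,t_1,t_2}$, where it holds for every $\alpha\in\C^*$. Since $\bas{t_1}$ and $\bas{t_1+t_2}$ are one-dimensional and abelian they are subalgebras, so each such $\Mg_0$ contains a subalgebra equal to $\bas{t_1}$ or to $\bas{t_1+t_2}$.

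Next I would transport the conclusion to $\Mg$ itself: write $\Mg=M^{-1}\Mg_0 M$ or $\Mg=\tau(M^{-1}\Mg_0 M)$ for a complex inner automorphism $M$. Since $\tau$ is a Lie algebra automorphism with $\tau(t_1)=t_2$ and $\tau(t_1+t_2)=t_1+t_2$, and since conjugation by $M$ preserves complex-inner-automorphism type, the distinguished subalgebra $\bas{t_1}$ or $\bas{t_1+t_2}$ of $\Mg_0$ is carried to a subalgebra of $\Mg$ that is complex inner automorphic to one of $\bas{t_1}$, $\bas{t_2}$, $\bas{t_1+t_2}$; this is exactly the assertion. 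I do not expect a genuine obstacle here, the entire content being already packaged in \THM{sl2+sl2}; the only care needed is to run through the list without omission and to keep track of the flip $\tau$ in the final step. For completeness I would also remark that the hypothesis is sharp: every element of $\bas{s_1,s_2}$ is semisimple in $\Msl_2\oplus\Msl_2$, whereas any nonzero element of a conjugate of $\bas{t_1}$, $\bas{t_2}$ or $\bas{t_1+t_2}$ has nonzero nilpotent part, so $\bas{s_1,s_2}$ contains none of these subalgebras.
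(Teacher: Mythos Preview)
Your proposal is correct and is essentially the approach implicit in the paper: the paper gives no separate argument for this corollary, stating only that it (together with \THM{sl2+sl2}) follows from \cite{so4}, and your case-by-case inspection of the list in \THM{sl2+sl2} is the natural way to make that explicit. Your handling of the factor-swap $\tau$ and your observation that $\bas{s_1,s_2}$ is $\tau$-invariant are the only points requiring any care, and you treat them correctly.
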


\section{The classification of \texorpdfstring{$\P^1\times\P^1$}{P1xP1}}
\label{sec:iqf-segre}

In a perfect world we directly use
\THM{D} to compute for each Lie subalgebra $\lie(G)\subseteq\Msl_2\oplus\Msl_2$, 
the vector space $I_2^G(\Ys)$ generated by $G$-invariant quadratic forms
on the double Segre surface $\Ys\cong\P^1\times\P^1$.
We would then proceed by classifying quadratic forms 
in $I_2^G(\Ys)$ of signature $(1,n+1)$ as was suggested in \RMK{goal}. 

Unfortunately, there are two problems.
\THM{sl2+sl2} only provides the classification of subalgebras of $\Msl_2\oplus\Msl_2$ up to complex inner automorphisms
and thus the real structure is not preserved.
The second problem is that it is in general difficult to classify quadratic forms in $I_2^G(\Ys)$ of fixed signature.
For example, for what $n\geq 3$ do there exist quadratic forms of signature $(1,n+1)$ in the vector space $I_2^G(X_1)$ 
at \LEM{iqf}c?

\LEM{M} plays a crucial role in circumventing these two problems using geometric arguments.
We are able to prove \LEM{M1}, since the invariant quadratic forms in \LEM{iqf}a 
have a particularly nice basis.
This section will end with a proof for \THM{M} and \COR{iqf}.
In particular, we will see that the answer to the 
question in the previous paragraph is $n\in \{3\}$.

\begin{lemma}
\textrm{\bf(invariant quadratic forms for $\P^1\times\P^1$)}
\label{lem:iqf}
\\
Let the real structures $\sigma_0$, $\sigma_1$, $\sigma_2$ and $\sigma_3$ for Lie algebras 
be as defined in \LEM{invo-sl2sl2}.
Let $\Ys$, $X_0$, $X_1$, $X_2$ and $X_3$ be the double Segre surfaces in $\P^8$
as defined in \LEM{invo-P8}.
We suppose that $G$ is a Lie subgroup of $\autc(\Ys)$.
\begin{Mlist}

\item[\bf a)]  
If $\lie(G)=\bas{\Mi s_1,\Mi s_2}$ with real structure $\sigma_2$, then
$G\cong \PSO(2)\times \PSO(2)$,
{\footnotesize%
\begin{gather*}
I_2^G(\Ys)=
\left\langle
y_0^2-y_1y_2,~
y_0^2-y_3y_4,~
y_0^2-y_5y_6,~
y_0^2-y_7y_8
\right\rangle_\C 
,\text{ and}\\
I_2^G(X_2)=
\left\langle
\tfrac{1}{4}x_0^2-x_1^2-x_2^2,~
\tfrac{1}{4}x_0^2-x_3^2-x_4^2,~
\tfrac{1}{4}x_0^2-x_5^2-x_6^2,~
\tfrac{1}{4}x_0^2-x_7^2-x_8^2
\right\rangle_\C.
\end{gather*}}%

\item[\bf b)]
If $\lie(G)=\bas{\Mi s_1, s_2}$ with real structure $\sigma_1$, then $G\cong \PSO(2)\times \PSX(1)$,
{\footnotesize%
\begin{gather*}
I_2^G(\Ys)=
\left\langle
y_0^2-y_1y_2,~
y_0^2-y_3y_4,~
y_0^2-y_5y_6,~
y_0^2-y_7y_8
\right\rangle_\C
,\text{ and}\\
I_2^G(X_1)=
\left\langle
x_0^2 - x_1^2 - x_2^2,~  
x_0^2 - x_3x_4,~        
x_5x_6 - x_7x_8,~ 
x_0^2 - x_5x_7 - x_6x_8
\right\rangle_\C 
.
\end{gather*}}%
 
\item[\bf c)]
If $\lie(G)=\bas{\Mi s_1, t_2}$ with real structure $\sigma_1$, then $G\cong \PSO(2)\times \PSE(1)$,
{\footnotesize%
\begin{gather*}
I_2^G(\Ys)=
\left\langle~
y_0^2 - y_3y_4 ,~ 
y_4^2 - y_6y_7 ,~ 
y_1y_6 - y_2y_7,~ 
2y_1y_2 - y_5y_6 - y_7y_8
~\right\rangle_\C
,\text{ and}\\
I_2^G(X_1)=
\left\langle~
x_0^2 - x_3x_4,~
x_4^2 - x_6^2 - x_7^2,~ 
x_1x_6 - x_2x_7,~ 
x_1^2 + x_2^2 - x_5x_7 - x_6x_8 
~\right\rangle_\C. 
\end{gather*}}%

\item[\bf d)]
If $\lie(G)=\bas{t_1, q_1, s_1, t_2, q_2, s_2}$ with real structure either $\sigma_0$ or $\sigma_3$, 
then $G\cong \PSL(2)\times \PSL(2)$,
{\footnotesize%
\begin{gather*}
I_2^G(\Ys)=
\langle~
2y_0^2 - 2y_1y_2 - 2y_3y_4 + y_5y_6 + y_7y_8
~\rangle_\C
,\\
I_2^G(X_0)=
\langle~
2x_0^2 - 2x_1x_2 - 2x_3x_4 + x_5x_6 + x_7x_8
~\rangle_\C
,\text{ and}\\
I_2^G(X_3)=
\langle~
2x_0^2 - 4x_2x_3 - 4x_1x_4 + x_5x_6 + x_7^2 + x_8^2
~\rangle_\C
\end{gather*}}%
\end{Mlist}
\end{lemma}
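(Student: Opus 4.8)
The plan is to treat all four items uniformly via \THM{D}. For a subgroup $G\subseteq\autc(\Ys)$ with $\lie(G)$ spanned by matrices $D_1,\dots,D_k\in\Msl_2\oplus\Msl_2$, the representation $\cS$ of \EQN{S} turns each $D_j$ into the $9\times 9$ matrix $\widehat D_j:=\bigl(\partial_\alpha\,\cS(\exp(\alpha D_j))\bigr)(0)$, and \THM{D} yields $I_2^G(\Ys)=\set{q_A\in I_2(\Ys)}{\widehat D_j^{\top}A+A\,\widehat D_j=0,\ 1\le j\le k}$. Using the explicit $20$-dimensional basis of $I_2(\Ys)$ displayed above, I would solve this as a finite linear system (with \citep[\texttt{moebius\_aut}]{maut} as an independent check). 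The isomorphism type of $G$ is read off from $\lie(G)$ and its real structure via \LEM{invo-sl2sl2} and \RMK{naming} --- e.g.\ $\bas{\Mi s_1,\Mi s_2}$ with $\sigma_2$ is a product of two compact one-parameter groups, hence $\PSO(2)\times\PSO(2)$. Finally, since each $\mu_i$ is a projective isomorphism with $X_i=\mu_i^{-1}(\Ys)$ and the $G$-action on $X_i$ is the $\mu_i$-conjugate of the one on $\Ys$, one has $I_2^{G}(X_i)=\bas{q\circ\mu_i \mid q\in I_2^{G}(\Ys)}_\C$, so the $X_i$-assertions follow by composing the generators of $I_2^G(\Ys)$ with the explicit $\mu_i$ of \LEM{invo-P8} and changing basis.

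For (a) and (b) the complexified Lie algebra is the Cartan subalgebra $\bas{s_1,s_2}_\C$, so $\cS$ acts diagonally on $y_0,\dots,y_8$, with each $y_j$ an eigenvector whose weight is the net $(s,u)$-exponent of the corresponding monomial of $\tilde{\xi}$. A quadratic monomial $y_iy_j$ is then $G$-invariant exactly when that weight vanishes, which happens only for $y_0^2,\ y_1y_2,\ y_3y_4,\ y_5y_6,\ y_7y_8$; intersecting their span with $I_2(\Ys)$ I would note that $a\,y_0^2+b\,y_1y_2+c\,y_3y_4+d\,y_5y_6+e\,y_7y_8$ restricts along $\xi$ to the constant $a+b+c+d+e$, so the intersection is the $4$-dimensional hyperplane $a+b+c+d+e=0$, spanned by $y_0^2-y_1y_2,\ y_0^2-y_3y_4,\ y_0^2-y_5y_6,\ y_0^2-y_7y_8$. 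This is $I_2^G(\Ys)$ in both items, the real structure being irrelevant. Composing with $\mu_2$ gives the displayed $I_2^G(X_2)$ (those four compositions already appear before \LEM{autP1P1}); composing with $\mu_1$ and changing basis --- e.g.\ $(y_0^2-y_5y_6)\circ\mu_1+(y_0^2-y_7y_8)\circ\mu_1=2x_0^2-2x_5x_7-2x_6x_8$, with a suitable difference proportional to $x_5x_6-x_7x_8$ --- gives the displayed $I_2^G(X_1)$. The groups are $\PSO(2)\times\PSO(2)$ and $\PSO(2)\times\PSX(1)$ by \RMK{naming}, the former matching \EXM{toric}.

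For (c) I would write $G=H_1H_2$ with $H_1$ the first-factor torus tangent to $\Mi s_1$ and $H_2=\exp(\beta t_2)$, so $I_2^G(\Ys)=I_2^{H_1}(\Ys)\cap I_2^{H_2}(\Ys)$. Grading the coordinates by $s$-exponent ($y_0,y_3,y_4$ in degree $0$; $y_1,y_5,y_7$ in degree $+1$; $y_2,y_6,y_8$ in degree $-1$), the torus $H_1$ acts diagonally, so $I_2^{H_1}(\Ys)$ is the span of those of the $20$ basis forms that are $s$-homogeneous of degree $0$; exactly ten of them are, so $\dim I_2^{H_1}(\Ys)=10$. Since $t_2$ is tangent to $(u:w)\mapsto(u+\beta w:w)$, imposing $H_2$-invariance means substituting $u\mapsto u+\beta w$ in $\tilde{\xi}$ and requiring $\beta$-independence; solving this linear system on the ten-dimensional space gives the $4$-dimensional solution $\bas{y_0^2-y_3y_4,\ y_4^2-y_6y_7,\ y_1y_6-y_2y_7,\ 2y_1y_2-y_5y_6-y_7y_8}_\C$ (each generator is seen directly to be fixed by the substitution). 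Composing with $\mu_1$ yields $I_2^G(X_1)$, and $G\cong\PSO(2)\times\PSE(1)$ by \RMK{naming} ($\sigma_1$ acts on the second factor as ordinary conjugation, so $t_2$ generates a group of translations).

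For (d), $\lie(G)=\Msl_2\oplus\Msl_2$ contains the Cartan $\bas{s_1,s_2}$, so $I_2^G(\Ys)$ sits inside the $4$-dimensional space of (a); imposing in addition the conditions of the raising operators $\widehat{t_1}$ and $\widehat{t_2}$ (substitute $s\mapsto s+\alpha t$, $u\mapsto u+\beta w$ in $\tilde{\xi}$, demand $(\alpha,\beta)$-independence) should cut this to the line $\bas{2y_0^2-2y_1y_2-2y_3y_4+y_5y_6+y_7y_8}_\C$. Equivalently, a Clebsch--Gordan computation shows that the $\autc(\P^1\times\P^1)$-module of quadratic forms on $\P^8$ contains the trivial summand with multiplicity one and that this copy survives in $I_2(\Ys)$, the kernel of the multiplication map onto $H^0(\mathcal O(4,4))$; the displayed form vanishes along $\xi$ since $2-2-2+1+1=0$. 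As $\mu_0$ is the identity, $I_2^G(X_0)=I_2^G(\Ys)$, and composing with $\mu_3$ gives the displayed $I_2^G(X_3)$; the identification $G\cong\PSL(2)\times\PSL(2)$ follows from \LEM{invo-sl2sl2} and \RMK{naming}. The main obstacle is the linear algebra behind \THM{D} in (c) and (d) --- and, above all, checking that the intermediate and final dimension counts are \emph{exact} rather than mere lower bounds obtained by exhibiting invariants; the weight-space bookkeeping makes (a), (b) and the first step of (c) transparent, but the unipotent directions need an honest, if short, computation.
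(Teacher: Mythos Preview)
Your proposal is correct and follows the same skeleton as the paper's proof: identify $G$ via \RMK{naming} and \LEM{invo-sl2sl2}, apply \THM{D} to each generator of $\lie(G)$ through the representation~$\cS$, intersect, and then transport to $X_i$ by composing with the $\mu_i$ of \LEM{invo-P8}. The paper's own argument is terser --- it writes out the two one-parameter subgroups for (a), says ``compute $I_2^{H_1}(\Ys)\cap I_2^{H_2}(\Ys)$ via \THM{D}'', refers the remaining cases to the same method, and delegates the linear algebra to \citep[\texttt{moebius\_aut}]{maut}.

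What you add, and what the paper does not spell out, is the weight-space bookkeeping: for the Cartan $\bas{s_1,s_2}_\C$ you read off directly that the only invariant quadratic monomials are $y_0^2,\,y_1y_2,\,y_3y_4,\,y_5y_6,\,y_7y_8$, and that intersecting their span with $I_2(\Ys)$ imposes the single relation $a+b+c+d+e=0$; similarly, the $s$-grading isolates the ten $H_1$-invariant basis forms in (c). This makes (a), (b) and half of (c) computation-free, and your remark that a weight-zero vector killed by $t_1,t_2$ lies in the trivial summand is exactly why in (d) one need not separately impose $q_1,q_2$. The only place where you still need the honest linear solve is the unipotent cut in (c) and (d), which is precisely where the paper falls back on computer algebra as well; your closing caveat about exact dimension counts is apt, and is the one step that neither you nor the paper carries out by hand.
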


%
%

\begin{proof}
a)
We know from \RMK{naming} that $G\cong \PSO(2)\times \PSO(2)$, since
$\Mi s_1$ and $\Mi s_2$ are the tangent vectors of the following two 1-parameter subgroups
of $\autc(\P^1\times\P^1)$:
\begin{center}
\scriptsize
$
\left(
\begin{bmatrix}
\cos(\alpha)+\Mi\sin(\alpha) & 0\\
0 & \cos(\alpha)-\Mi\sin(\alpha)
\end{bmatrix}
,
\begin{bmatrix}
1 & 0\\
0 & 1
\end{bmatrix}
\right)
\& 
\left(
\begin{bmatrix}
1 & 0\\
0 & 1
\end{bmatrix}
,
\begin{bmatrix}
\cos(\alpha)+\Mi\sin(\alpha) & 0\\
0 & \cos(\alpha)-\Mi\sin(\alpha)
\end{bmatrix}
\right).
$
\end{center}
Via the map $\phi\c S^1\times S^1\to \autc^\T(\Ys)\subset\aut(\P^8)$
from \EXM{toric}, we obtain 1-parameter subgroups $H_1$ and $H_2$ of $\autc(\P^8)$.
We use \THM{D} to compute the vector spaces 
$I_2^{H_1}(\Ys)$ and $I_2^{H_2}(\Ys)$
of invariant quadratic forms.
Since $\lie(G)=\bas{\Mi s_1,\Mi s_2}$ we have $I_2^G(\Ys)=I_2^{H_1}(\Ys)\cap I_2^{H_2}(\Ys)$.
In order to compute $I_2^G(X_2)$ we compose the generators of $I_2^G(\Ys)$ with $\mu_2$ from \LEM{invo-P8}.
The proofs of b), c) and d) are similar.
The invariant quadratic forms can be computed automatically with \citep[\texttt{moebius-aut}]{maut}.
\end{proof}

\begin{remark}
\label{rmk:sigma3}
It follows from \RMK{invo-P8} that $X_3$ is a 2-uple embedding of~$\S^2$ into~$\P^8$.
Hence, $X_3$ cannot be projectively equivalent to a celestial surface. 
We nevertheless included the real structure $\sigma_3\c\Ys\to\Ys$ 
at \LEM{iqf}d in order to prove \COR{iqf}.
If $G\cong\PSL(2)\times\PSL(2)$, 
then 
$\sigma_3\c\Msl_2\oplus\Msl_2\to\Msl_2\oplus\Msl_2$,
and thus 
$\sigma_3\c G\to G$,
is specified in \LEM{invo-sl2sl2}.
Notice that in this case $\set{\varphi\in G}{\sigma_3(\varphi)=\varphi}$
is three-dimensional and not six-dimensional.
\END
\end{remark}


\begin{example}
\textrm{\bf(spindle cyclide and horn cyclide)}
\label{exm:spindle-horn}
\\
Let $Z_s\subset \P^4$ be the image of 
the monomial parametrization defined by \TAB{L}f (spindle cyclide) using the left coordinates in \TAB{coord}.
Let $Z_h\subset \P^4$ denote the image of the monomial parametrization defined by \TAB{L}g (horn cyclide).
Recall from \RMK{proj}, that both $Z_s$ and $Z_h$ are toric projections of $\Ys$.
It follows from \LEM{I2} that
$I_2(Z_s)$ is generated by the generators of $I_2(\Ys)$ that do not contain $y_i$ for $i\in\{5,6,7,8\}$.
Similarly, $I_2(Z_h)$ is generated by generators of $I_2(\Ys)$ that do not contain $y_i$ for $i\in\{1,2,5,8\}$. 
Thus
\[
I_2(Z_s)=\bas{y_0^2-y_1y_2, y_0^2-y_3y_4}_\C
\quad\text{and}\quad
I_2(Z_h)=\bas{y_0^2-y_3y_4, y_4^2-y_6y_7}_\C.
\]
Let $\mu_s$ and $\mu_h$ be restrictions of $\mu_1\c\P^8\to\P^8$ in \LEM{invo-P8} as follows:
\[
\begin{array}{llcl}
\mu_s\colon\P^4\to\P^4,&(x_0: x_1: x_2: x_3: x_4)&\mapsto&(x_0:x_1+\Mi x_2:x_1-\Mi x_2:x_3:x_4),                                                                   \\[1mm]   
\mu_h\colon\P^4\to\P^4,&(x_0: x_3: x_4: x_6: x_7)&\mapsto&(x_0:x_3:x_4:x_7-\Mi x_6:x_7+\Mi x_6).                                                                   \\[1mm]
\end{array}
\]
We set 
$X_s:=(\alpha_s\circ\mu_s)^{-1}(Z_s)$ and $X_h:=(\alpha_h\circ\mu_h)^{-1}(Z_h)$, where
\[
\begin{array}{@{}l@{}l@{}c@{}l@{}}
\alpha_s\colon\P^4\to\P^4,&(x_0: x_1: x_2: x_3: x_4)&\mapsto&(x_4: x_1: x_2: {\scriptstyle\frac{1}{\sqrt{2}}}(x_0 -x_3): {\scriptstyle\frac{1}{\sqrt{2}}}(x_0 + x_3)),\\[1mm]
\alpha_h\colon\P^4\to\P^4,&(x_0: x_3: x_4: x_6: x_7)&\mapsto&({\scriptstyle\frac{1}{\sqrt{2}}}x_4: x_3: -x_0-x_3: x_6: x_7).                                          
\end{array}
\]
Notice that $X_s, X_h\subset\S^3$, since 
\quad
$I_2(X_s)=\bas{
x_1^2 + x_2^2 - x_4^2,~
x_0^2 -x_3^2 -2x_4^2
}_\C$ 
\quad
and  
\[
I_2(X_h)=\bas{
x_4^2 + 2x_0x_3 + 2x_3^2
,~ 
x_0^2+ 2x_0x_3 +  x_3^2  - x_6^2 - x_7^2 
}_\C.
\]
Let $\pi^h,\pi^s\c S^3\dto \R^3$ be stereographic projections
so that the projective closures of these projections with the above coordinates are
\begin{gather*}
\tilde{\pi}^h\c\S^3\dto\P^3,\quad (x_0: x_1: x_2: x_3: x_4)\mapsto(x_0-x_3:x_1:x_2:x_4),
\\
\tilde{\pi}^s\c\S^3\dto\P^3,\quad (x_0: x_3: x_4: x_6: x_7)\mapsto(x_0+x_3:x_4:x_7:x_6).
\end{gather*}
We verify that $\pi^h(X_s(\R))$ is a circular cone and that $\pi^s(X_h)$ is a circular cylinder
so that, by \DEF{names}, $X_s$ and $X_h$ are indeed a spindle cyclide and horn cyclide, \resp.
Notice that both $\pi^h$ and $\pi^s$ have a real isolated singularity as center of projection. 
Since these isolated singular points have to be preserved by
the M\"obius automorphisms, it follows from \SEC{intro-defs} that 
$\bM(X_s)$ and $\bM(X_h)$ are subgroups of Euclidean similarities.
The circular cone and the circular cylinder are unique up to Euclidean similarities and thus 
$\bD(X_s)=\bD(X_h)=0$.
By \LEM{iqf}b the generators of the vector space $I_2(Z_s)$ are $\PSO(2)\times \PSX(1)$ invariant.
Similarly, by \LEM{iqf}c, the generators of the vector space $I_2(Z_h)$ are $\PSO(2)\times \PSE(1)$ invariant. 
\PRP{blowup} characterizes the projections from 
$\autc(X_s)$ and $\autc(X_h)$ to $\autc(\P^1)$.
We conclude that
$\bM(X_s)\cong\PSO(2)\times \PSX(1)$ and $\bM(X_h)\cong\PSO(2)\times \PSE(1)$
so that $\autc(X_s)=\bM(X_s)$ and $\autc(X_h)=\bM(X_h)$.
\END
\end{example}

\begin{lemma}
\textrm{\bf(M\"obius automorphism groups)}
\label{lem:M}
\\
If $X\subset\S^n$ is a $\lambda$-circled
celestial surface
\st $\lambda<\infty$ and $\dim \bM(X)\geq 2$,
then either 
\begin{Menum}
\item $\bM(X)\cong \PSO(2)\times \PSO(2)$ and $\lie(\bM(X))\subset\Msl_2\oplus\Msl_2$ is, up to inner automorphism, equal to $\bas{\Mi s_1,\Mi s_2}$
with real structure $\sigma_2$ in \LEM{invo-sl2sl2}, 
\item  
$\bM(X)\cong \PSO(2)\times \PSX(1)$,
$\bT(X)=(2,4,3)$, 
$\bS(X)=2\underline{A_1}+2A_1$, 
$\bD(X)=0$,
$\bM(X)=\autc(X)$
and $X$ is a spindle cyclide, or
\item 
$\bM(X)\cong \PSO(2)\times \PSE(1)$, 
$\bT(X)=(2,4,3)$, 
$\bS(X)=\underline{A_3}+2A_1$, \\
$\bD(X)=0$,
$\bM(X)=\autc(X)$
and $X$ is a horn cyclide.
\end{Menum}
\end{lemma}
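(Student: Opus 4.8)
The plan is to combine \PRP{blowup}, which already pins $X_N$ down to one of six toric types with prescribed $\bT(X)$, $\bS(X_N)$ and bounds on the factor‑projections of $\autc(X)$, with the algebraic classification \COR{sl2+sl2} of subalgebras of $\Msl_2\oplus\Msl_2$, testing each candidate against the requirement — coming from \PRP{moeb} — that $\bM(X)$ preserve a quadratic form of signature $(1,n+1)$ with $n\geq3$. Concretely: since $\bM(X)\subseteq\autc(X)$ we have $\dim\autc(X)\geq2$, so by \PRP{blowup} the surface $X_N$ is toric, $\autc(X)$ and hence $\bM(X)$ embeds into $\autc(\P^1)\times\autc(\P^1)$, and there is a birational linear projection $\rho\c\Ys\dto X$ with centre $\Lambda$ as in one of the rows \textbf{(a)}--\textbf{(f)} of \TAB{P1P1}. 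By \PRP{moeb} the group $\bM(X)$ is isomorphic to the subgroup $G\subseteq\autc(\Ys)\cong\autc(\P^1)\times\autc(\P^1)$ that fixes the associated M\"obius quadric $Q=V(q)$, where $q$ is a $G$‑invariant quadratic form of signature $(1,n+1)$, so $\operatorname{rank}(q)\geq5$. By \COR{L} and \RMK{invo-P8} the real structure of $\Ys$ is, up to inner automorphism, one of $\sigma_0,\sigma_1,\sigma_2$ ($\sigma_3$ being excluded by \RMK{sigma3}), whence $\Mg:=\lie(G)$ is a real subalgebra of $\Msl_2\oplus\Msl_2$ for the corresponding involution of \LEM{invo-sl2sl2}. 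It remains to determine which pairs $(\Mg,\sigma_i)$ with $\dim\Mg\geq2$ admit such an invariant form, and which surface each produces.

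\textbf{The toral case.} The decisive point is that, as a real Lie algebra, $\Msl_2\oplus\Msl_2$ is $\Msu_2\oplus\Msu_2$ for $\sigma_2$ (compact, no nilpotents), $\Msu_2\oplus\Msl_2(\R)$ for $\sigma_1$ (nilpotents only in the right summand), and $\Msl_2(\R)\oplus\Msl_2(\R)$ for $\sigma_0$. By \COR{sl2+sl2}, either $\Mg$ is complex inner automorphic to $\bas{s_1,s_2}$ or it contains a copy of $\bas{t_1}$, $\bas{t_2}$ or $\bas{t_1+t_2}$. In the first case, since every $\sigma_i$ preserves $\bas{s_1,s_2}$, the real form of $\Mg$ is $\bas{s_1,s_2}$, $\bas{\Mi s_1,s_2}$ or $\bas{\Mi s_1,\Mi s_2}$, generating $\PSX(1)\times\PSX(1)$, $\PSO(2)\times\PSX(1)$ or $\PSO(2)\times\PSO(2)$ according as $\sigma$ is $\sigma_0,\sigma_1$ or $\sigma_2$. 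For $\sigma_0$ a sign count on the basis $\bas{x_0^2-x_1x_2,\, x_0^2-x_3x_4,\, x_0^2-x_5x_6,\, x_0^2-x_7x_8}$ of the invariant forms on $X_0$ shows that every member of rank at least $5$ has, for the standard real structure, at least two positive and at least two negative squares; it can never be of signature $(1,n+1)$, so this subcase is empty. For $\sigma_2$ we get alternative (1), and \LEM{iqf}a exhibits the corresponding M\"obius quadrics. For $\sigma_1$, \LEM{iqf}b shows that invariant forms of signature $(1,n+1)$ of rank $\geq5$ exist only for $n=3$; comparing $\Lambda$ with \TAB{P1P1} — the only rows with $\pi_2(\autc(X))\supseteq\PSX(1)$ are \textbf{(a)}, \textbf{(c)}, \textbf{(e)}, and \textbf{(a)}, \textbf{(c)} are discarded by a further signature/configuration check — and with \EXM{spindle-horn} identifies $X$ as a spindle cyclide, giving alternative (2) with $\bM(X)=\autc(X)$.

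\textbf{The parabolic case.} Otherwise $\Mg$ contains a real nilpotent line which, after a real inner automorphism and using the compactness facts, is $\bas{t_1}$ or $\bas{t_1+t_2}$ with real structure $\sigma_0$, or $\bas{t_2}$ with real structure $\sigma_0$ or $\sigma_1$. The associated one‑parameter subgroup of $\bM(X)$ acts as a parabolic on a $\P^1$‑factor, hence fixes exactly one point there, so it can preserve $\pi_i(\Lambda)$ only when $|\pi_i(\Lambda)|\leq1$. Scanning \TAB{P1P1}, this forces row \textbf{(a)} when the parabolic sits in the left factor or on the diagonal, and rows \textbf{(a)}, \textbf{(c)} or \textbf{(f)} when it sits in the right factor. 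In rows \textbf{(a)} and \textbf{(c)} a computation with \THM{D}, \LEM{I2} and a sign count shows that the relevant spaces of invariant forms — on $\Ys$ for a double Segre surface, on $X_N$ for a weak dP6 — contain no form of signature $(1,n+1)$ with $n\geq3$ whose stabiliser is two‑dimensional, so these do not occur. Row \textbf{(f)} forces $\pi_1(\autc(X))=\PSO(2)$ and $\pi_2(\autc(X))=\PSE(1)$, hence $\Mg=\bas{\Mi s_1,t_2}$ with real structure $\sigma_1$; by \LEM{iqf}c such quadrics exist only for $n=3$, and \EXM{spindle-horn} together with \PRP{blowup} recognise $X$ as a horn cyclide, giving alternative (3). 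Matching the $\sigma_1$‑subcase of the toral case against this one then shows the three alternatives are exhaustive and mutually exclusive.

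\textbf{Main obstacle.} The delicate part is the parabolic case: one must exclude a ``superfluous'' parabolic direction in $\bM(X)$ when $X$ is a (projected) double Segre surface or a weak dP6, which is exactly where the explicit spaces of invariant quadratic forms (from \THM{D} and \LEM{iqf}) and the sign bookkeeping are needed. A recurring subtlety is that the signature of a form depends on the real structure — e.g.\ $y_0^2-y_1y_2$ has signature $(1,1)$ when $y_1,y_2$ are real coordinates but $(1,2)$ when they form a complex‑conjugate pair — so the structures $\sigma_0,\sigma_1,\sigma_2$ must be carried along throughout rather than identified at the outset.
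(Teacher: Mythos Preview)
Your strategy diverges substantially from the paper's. You attack the problem algebraically: classify real $2$-dimensional subalgebras $\Mg\subset\Msl_2\oplus\Msl_2$, compute $I_2^G(\Ys)$ in each case, and rule out those that admit no invariant form of signature $(1,n+1)$ with $n\geq3$. The paper, by contrast, uses a \emph{geometric} argument: when $\Mg$ contains a parabolic direction (say $\sim_\C\bas{t_2}$ or $\bas{t_1+t_2}$) or a real-split toral direction, the corresponding one-parameter subgroup $H\subset\bM(X)$ fixes a real fiber $L$ of $\pi_2$ and a real point $\hat c\in L$; projecting stereographically from $\rho(\hat c)\in X(\R)$ turns $H$ into a one-parameter group of Euclidean similarities of $\R^n$ that must simultaneously fix a line (the image of $L$) and rotate the horizontal circles (since $X(\R)$ is not ruled by lines and the scaling component is trivial). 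This is impossible, and the contradiction disposes of rows \textbf{(a)} and \textbf{(c)} in one stroke without any signature bookkeeping.

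The genuine gap in your argument is precisely at the point you yourself flag as the ``main obstacle'': the parabolic case for rows \textbf{(a)} and \textbf{(c)}. You write that ``a computation with \THM{D}, \LEM{I2} and a sign count shows that the relevant spaces of invariant forms \ldots\ contain no form of signature $(1,n+1)$ with $n\geq3$ whose stabiliser is two-dimensional'', but no such computation is carried out, and the paper explicitly warns (opening of \SEC{iqf-segre}) that this route is difficult --- indeed, \LEM{M} is introduced specifically to circumvent it. The spaces $I_2^H(\Ys)$ for $\lie(H)\in\{\bas{t_2},\bas{t_1+t_2}\}$ are not among those tabulated in \LEM{iqf}, they are larger than four-dimensional, and the signature of a generic member depends nontrivially on the coefficients and on which of $\sigma_0,\sigma_1$ is in force; a bare assertion that no $(1,n+1)$ form exists is not a proof. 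The same objection applies to your handling of the $\sigma_1$-toral subcase, where you defer to \LEM{iqf}b and a ``further signature/configuration check'' to exclude rows \textbf{(a)} and \textbf{(c)} without giving it. Your overall architecture is reasonable and could in principle be completed, but as written the hardest step is a promissory note; the paper's Euclidean-similarity argument (see the discussion around \FIG{M}) is what actually closes it.
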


\begin{proof}
In \PRP{blowup} we related $\P^1\times\P^1$ to $X$, 
via a birational linear projection $\rho\c\Ys\dto X$,
where $\Ys\cong\P^1\times\P^1$.
Recall from \RMK{sigma3} that the real structure of $\Ys$, 
that is via $\rho$ compatible with the real structure of $X$, 
cannot be~$\sigma_3$.
We know from \LEM{autP1P1} that $\autc(\P^1\times\P^1)\cong\autc(\P^1)\times\autc(\P^1)$.
The first and second projection are denoted by $\pi_i\c\P^1\times\P^1\to\P^1$ with $i\in\{1,2\}$
and we denote the projections of $\autc(\P^1)\times\autc(\P^1)$ 
to $\autc(\P^1)$ by $\pi_1$ and $\pi_2$ as well.
The automorphisms of $X$ in the identity component
factor via $\rho$ through automorphisms of $\Ys$ that leave
the center of projection $\Lambda$ invariant.
We make a case distinction on the configurations of $\Lambda$ in \TAB{P1P1}
were we identified $\Ys$ with $\P^1\times\P^1$. 
By \PRP{blowup} these are all possible configurations for $\Lambda$.

We first suppose that $\Lambda$ is the empty-set as in \TAB{P1P1}a.

We consider the action of subgroups of the M\"obius automorphism group $\bM(X)$ on $\P^1\times\P^1$. 
We start by showing that either \LEM{M}.1 holds or there exists a 
one-dimensional subgroup of $\bM(X)$
whose action on $\P^1\times\P^1$
leaves a real fiber~$L$ of $\pi_2$ invariant and leaves a real point $\hat{c}\in L$ on this fiber invariant. 
We write $\Mg \sim_\C \Mh$ and $\Mg \sim \Mh$
if Lie subalgebras $\Mg,\Mh\subset \Msl_2\oplus\Msl_2$ 
are complex and real inner automorphic, \resp.
Recall from \COR{sl2+sl2} that if $\lie(\bM(X))\nsim_\C\bas{s_1,s_2}$,
then there exists a one-dimensional Lie subgroup
$H\subset\bM(X)$ \st \Wlog either 
$\lie(H)\sim_\C \bas{t_2}$ or
$\lie(H)\sim_\C \bas{t_1+t_2}$.

Suppose that $\lie(\bM(X))\sim_\C\bas{s_1,s_2}$ \st both $\pi_1(\bM(X))$ and $\pi_2(\bM(X))$ 
leave complex conjugate basepoints invariant while acting on $\P^1$.
By \COR{L}a we may assume \Wlog that the real structure of~$\Ys$ is defined by $\sigma_2$ in \LEM{invo-P8}.
The induced real structure on $\Msl_2\oplus\Msl_2$ is as in \LEM{invo-sl2sl2}.
It follows from \RMK{naming} that $\lie(\bM(X))\sim\bas{\Mi s_1,\Mi s_2}$,
so that $\pi_i(\bM(X))$ consists of all automorphisms in $\autc(\P^1)$ 
that preserve two complex conjugate basepoints for $1\leq i\leq 2$. 
Hence, $\bM(X)\subseteq \PSO(2)\times \PSO(2)$ and
since $\dim \bM(X)\geq 2$ by assumption
this must be an inclusion of connected Lie groups of the same dimension
so that $\bM(X)\cong \PSO(2)\times \PSO(2)$. We conclude that \LEM{M}.1 holds in this case.

Suppose that $\lie(\bM(X))\sim_\C\bas{s_1,s_2}$ \st $\pi_2(\bM(X))$ 
leaves real points invariant while acting on $\P^1$.
Thus there exists a subgroup $H\subset\bM(X)$ \st $\lie(H)\sim\bas{s_2}$
and the action of $H$ on $\P^1\times\P^1$
leaves two real fibers~$L$ and~$L'$ of $\pi_2$ pointwise invariant.

Suppose that $\lie(H)\sim_\C \bas{t_2}$. 
It follows from \RMK{naming} that the action of~$H$ on~$\P^1\times\P^1$ 
leaves exactly one fiber $L:=\pi_2^{-1}(u)$ pointwise invariant for some point $u\in\P^1$. 
The number of fibers that are preserved are invariant under 
complex inner automorphisms and thus this fiber must be real
so that $\lie(H)\sim \bas{t_2}$.

Suppose that $\lie(H)\sim_\C \bas{t_1+t_2}$.
Analogously as before we find that $\lie(H)\sim\bas{t_1+t_2}$, since 
the action of $H$ on $\P^1\times\P^1$ 
leaves $M$ and $L$ invariant, where $M$ and $L$ are real fibers of $\pi_1$ and $\pi_2$, \resp.
Moreover, the action leaves the real point $\hat{c}\in L$ invariant \st $\{\hat{c}\}=M\cap L$.

Now suppose by contradiction that \LEM{M}.1 does not hold.
Notice that we are still in the case where $\Lambda=\emptyset$ as in \TAB{P1P1}a.
We showed that there exists a one-dimensional subgroup $H\subset\bM(X)$ 
\st either $\lie(H)\sim\bas{s_2}$, $\lie(H)\sim\bas{t_2}$ or $\lie(H)\sim\bas{t_1+t_2}$.
Moreover, the action of $H$
on $\P^1\times\P^1$ leaves a real fiber $L$ of $\pi_2$ invariant as a whole 
and leaves a real point $\hat{c}\in L$ invariant.
The point~$\hat{c}$ corresponds via $\rho$ to a point $c\in X$ \st $c\in X(\R)\subset S^n$.
We assume \Wlog that $c$ is the center of a stereographic projection $\pi:S^n\dto \R^n$.
Recall from \SEC{intro-defs} that $H$ induces a one-dimensional subgroup of the Euclidean similarities of~$\R^n$
that leaves $\pi(X(\R))$ invariant as a whole.
We call fibers of $\pi_1$ \df{horizontal} and fibers of $\pi_2$ \df{vertical},
since they correspond to horizontal and vertical line segments in \TAB{P1P1}, \resp.
A horizontal/vertical fiber that meets $\hat{c}$ correspond via $\rho$ and $\pi$ to a \df{horizontal/vertical line} in $\pi(X(\R))$.
The horizontal/vertical fibers that do not meet $\hat{c}$ correspond to \df{horizontal/vertical circles} in $\pi(X(\R))$.
Let $\fL\subset \pi(X(\R))$ be the vertical line corresponding to the vertical fiber $L$.
Thus $\fL$ is the stereographic projection of the set of real points in~$\rho(L)$
and the action of $H$ on $\pi(X(\R))$ leaves the line $\fL$ invariant as a whole.
The $H$-orbits of a general point on $L$ and a general point on a horizontal fiber 
corresponds to the $H$-orbits of points on $\fL$ and some horizontal circle, \resp.
The directions of such orbits in a small neighborhood are illustrated in  
\FIG{M}a if $\lie(H)\sim\bas{s_2}$ or $\lie(H)\sim\bas{t_2}$,
and in \FIG{M}b if $\lie(H)\sim\bas{t_1+t_2}$.
Suppose that $\varphi$ is a general Euclidean similarity in $H$.
Recall that an Euclidean similarity of $\R^n$ factors as a rotation, translation and/or scaling.
Notice that $\pi(X(\R))$ is not covered by lines and thus the scaling component of $\varphi$ is trivial.
It follows from \FIG{M}[a,b] that $\varphi$ has a nontrivial rotational component.
We arrived at a contradiction, since the line $\fL$ meets the horizontal circles 
and thus cannot be left invariant by the action of $H$.
We established that if $\Lambda=\emptyset$ as in \TAB{P1P1}a, then \LEM{M}.1 holds.
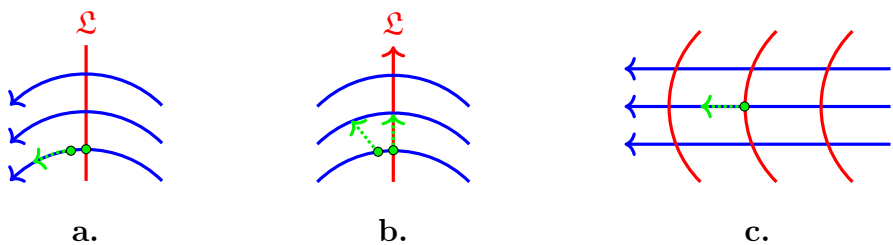
\begin{figure}[!ht]
\centering
\begin{tabular}{c@{\hspace{2cm}}c@{\hspace{2cm}}c}
\begin{tikzpicture}
%
\draw[very thick, red] (0,-0.5) -- (0,1.3) node[above, red] {$\fL$};
\draw[very thick, blue,<-] (-1,0.5) to [out=45, in=135]  (1,0.5);
\draw[very thick, blue,<-] (-1,0) to [out=45, in=135]    (1,0);
\draw[very thick, blue,<-] (-1,-0.5) to [out=45, in=135] (1,-0.5);

\draw[very thick, black!5!green, densely dotted, ->] (-0.2,-0.1) to [out=180, in=20] (-0.7,-0.25);
\draw[draw=black, fill=black!5!green] (-0.2,-0.1) circle [radius=0.06];
\draw[draw=black, fill=black!5!green] (0,-0.08) circle [radius=0.06];

\end{tikzpicture}
&
\begin{tikzpicture}
%
\draw[very thick, red, ->] (0,-0.5) -- (0,1.3) node[above, red] {$\fL$};
\draw[very thick, blue] (-1,0.5) to [out=45, in=135]  (1,0.5);
\draw[very thick, blue] (-1,0) to [out=45, in=135]    (1,0);
\draw[very thick, blue] (-1,-0.5) to [out=45, in=135] (1,-0.5);

\draw[very thick, black!5!green, densely dotted, ->] (-0.2,-0.1) to [out=90+45, in=-45] (-0.55,0.31);
\draw[very thick, black!5!green, densely dotted, ->] (0,-0.1) -- (0,0.4);
\draw[draw=black, fill=black!5!green] (-0.2,-0.1) circle [radius=0.06];
\draw[draw=black, fill=black!5!green] (0,-0.08) circle [radius=0.06];

\end{tikzpicture}
%
%
%
%
&
\begin{tikzpicture}
%
\draw[very thick, blue, <-] (-2,0.5)  -- (1.5, 0.5);
\draw[very thick, blue, <-] (-2,0)    -- (1.5, 0);
\draw[very thick, blue, <-] (-2,-0.5) -- (1.5,-0.5);
\draw[very thick, red] (-1,1) to [out=180+45, in=135] (-1,-1);
\draw[very thick, red] (0,1) to [out=180+45, in=135] (0,-1);
\draw[very thick, red] (1,1) to [out=180+45, in=135] (1,-1);

\draw[very thick, black!5!green, densely dotted, ->] (-0.5,0) -- (-1,0);
\draw[draw=black, fill=black!5!green] (-0.42,0) circle [radius=0.06];

\end{tikzpicture}
\\
\CAP{a}&
\CAP{b}&
\CAP{c}
\end{tabular}
\vspace{-4mm}
\caption{Euclidean similarities acting on a stereographic projection of $X(\R)$.
A dotted arrow depicts the direction of the orbit of the point at the tail.}
\label{fig:M}
\end{figure}

For the next case we suppose that $\Lambda$ is as in \TAB{P1P1}b or \TAB{P1P1}d. 
It follows from \PRP{blowup} that $\bM(X)\subseteq \PSO(2)\times \PSO(2)$
and since $\dim \bM(X)\geq 2$ we find as before that $\bM(X)\cong \PSO(2)\times \PSO(2)$.
We know from \PRP{L} and \TAB{L}[b,e] that $X$ has real structure $\sigma_2$.
Hence \LEM{M}.1 holds as well for these cases.

If $\Lambda$ is as in \TAB{P1P1}e or \TAB{P1P1}f, then 
$X\subset\S^3$ is either the spindle cyclide or the horn cyclide.
We showed in \EXM{spindle-horn} that $\bM(X)$ and $\bD(X)$
are as asserted in \LEM{M}.2 and \LEM{M}.3, \resp. 
The assertions for $\bT(X)$ and $\bS(X)$ follow from \PRP{blowup}.
We remark that the fiber
corresponding to $L$ as considered for the case $\Lambda=\emptyset$,
is in this case contracted to an isolated singularity $c\in X$ 
so that $\pi(X(\R))$ is covered by lines.

Finally, we suppose by contradiction that $\Lambda$ is as in \TAB{P1P1}c.

Let $L$ be the real vertical fiber of $\pi_2$ spanned by the complex conjugate points
$p$ and $\overline{p}$ as depicted in \TAB{P1P1}c.
We first show that there exists a subgroup $H\subset\bM(X)$ whose action on $\P^1\times\P^1$
leaves $L$ and the horizontal fibers invariant.

If $\lie(\bM(X))\sim_\C\bas{s_1,s_2}$, then 
the action of $\pi_2(\bM(X))$ on $\P^1$ leaves $\pi_2(p)$ and some other point $r\in\P^1$
invariant.
Thus in this case there exists a subgroup $H\subset \bM(X)$
whose action on $\P^1\times\P^1$ leaves the vertical fibers $L':=\pi_2^{-1}(r)$ and $L$ pointwise invariant,
and leaves each horizontal fiber invariant as a whole.
Now suppose that $\lie(\bM(X))\nsim_\C\bas{s_1,s_2}$.
It follows from \COR{sl2+sl2} that there exists a subgroup $H\subset \bM(X)$
\st $\lie(H)\sim_\C\bas{t_1+t_2}$
or 
$\lie(H)\sim_\C\bas{t_i}$ for $i\in\{1,2\}$.
Since automorphisms of $\P^1$ are 3-transitive and $|\pi_1(\Lambda)|=2$,
it follows that $\dim\pi_1(\bM(X))\leq 1$ and $\lie(H)\nsim_\C \bas{t_1}$. 
Since $\dim\bM(X)\geq 2$ by assumption, we find that $\dim\pi_2(\bM(X))\geq 1$.
Therefore there exists a subgroup $H\subset\bM(X)$ 
\st $\lie(H)\sim_\C \bas{t_2}$.
In this case, the action of $H$ on $\P^1\times\P^1$ 
leaves $L$ and the horizontal fibers invariant.

Since $|\Lambda\cap L|=2$ it follows that $\rho(L)$ is an isolated singularity of~$X$.
We assume \Wlog that this isolated singularity $c\in X(\R)$ 
is the center of stereographic projection $\pi\c S^n\to \R^n$. 
We use the same notation as before and find that, except for $L$, 
the horizontal fibers and vertical fibers correspond via $\rho$
to horizontal lines and vertical circles in $\pi(X(\R))\subset\R^n$, \resp.
We showed that there exists a subgroup $H\subset\bM(X)$
of Euclidean similarities whose action on $\pi(X(\R))$ 
leaves the horizontal lines invariant and sends vertical circles to vertical circles as in \FIG{M}c.
Thus the orbit of a point in a vertical circle is a horizontal line.
If we let the subgroup of scalings or translations act on the spanning plane of
a circle contained in $\pi(X(\R))$, then we obtain $\R^3$ so that $X\subset\S^3$.
We arrived at a contradiction as $\bT(X)$ is equal to $(2,6,m)$, where $m>3$ by \PRP{blowup}.

We concluded the proof, as we considered all cases for $\Lambda$ in \TAB{P1P1}.
\end{proof}

\newpage
\begin{lemma}
\textrm{\bf(rotational M\"obius automorphism group)}
\label{lem:M1}
\\
If $X\subset\S^n$ is a $\lambda$-circled celestial surface
\st $\lambda<\infty$ and \st $\bM(X)\cong \PSO(2)\times \PSO(2)$,
then \THM{M} holds for $X$.
\end{lemma}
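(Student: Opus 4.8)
The plan is to feed \LEM{M}, \PRP{blowup}, \PRP{moeb} and \LEM{iqf}a into one another so as to reduce the statement to a one‑line analysis of diagonal quadratic forms, and then to read off all four M\"obius invariants. First I would note that the hypotheses $\bM(X)\cong\PSO(2)\times\PSO(2)$ and $\lambda<\infty$ force case \LEM{M}.1 (cases \LEM{M}.2 and \LEM{M}.3 have the groups $\PSO(2)\times\PSX(1)$ and $\PSO(2)\times\PSE(1)$), so up to inner automorphism $G:=\bM(X)$ sits inside $\autc(\Ys)$ with $\lie(G)=\bas{\Mi s_1,\Mi s_2}$ and real structure $\sigma_2$; hence $\Ys$ with this real structure is the double Segre surface $X_2\cong\S^1\times\S^1$ of \LEM{invo-P8}. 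By \PRP{blowup} there is a birational linear projection $\rho\c\Ys\dto X\subset\S^n\subset\P^{n+1}$ with $3\leq n\leq 7$, and by \PRP{moeb}a,b its M\"obius pair is $(\Ys,V(q))$ for some $q\in I_2^G(\Ys)$ of signature $(1,n+1)$; conversely every such $q$ for which $\rho$ is birational recovers $X$ up to M\"obius equivalence. By \LEM{iqf}a, $I_2^G(X_2)=\bas{g_1,g_2,g_3,g_4}$ with $g_i=\tfrac14x_0^2-x_{2i-1}^2-x_{2i}^2$, and since the $g_i$ are real and $X_2$ carries the real structure $\sigma_0$, the admissible forms are exactly the real combinations $q=\sum_{i=1}^4 a_ig_i$, taken up to a nonzero real scalar. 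As $q=\tfrac14(\sum_i a_i)x_0^2-\sum_i a_i(x_{2i-1}^2+x_{2i}^2)$ is diagonal, it has signature $(1,n+1)$ with $3\leq n\leq 7$ precisely when, after rescaling by $\pm1$, all nonzero $a_i$ are positive; if $k$ denotes their number, then $q$ has rank $2k+1$ and signature $(1,2k)$, so $n=2k-1$ and $k\in\{2,3,4\}$.

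Next I would identify $X$ case by case, using that the center of $\rho$ is $\bS(V(q))$, the span of the $x_j$ with $a_j=0$, which under $\mu_2$ is the span of the corresponding $y_j\subset\P^8$. For $k=4$ the form is nondegenerate, $\rho$ is an isomorphism, and $X\cong\Ys$ is a double Segre surface with $\bT(X)=(2,8,7)$, $\bS(X)=\emptyset$. For $k=2$ we have $X\subset\S^3$, so by \PRP{blowup} and \TAB{P1P1} the only configuration compatible with $\PSO(2)\times\PSO(2)\subseteq\bM(X)$ is {\bf(d)} --- {\bf(c)}, {\bf(e)}, {\bf(f)} are excluded because $\PSA(1)$, $\PSX(1)$ and $\PSE(1)$ contain no copy of $\PSO(2)$ --- hence $X$ is the ring cyclide with $\bT(X)=(4,4,3)$, $\bS(X)=4A_1$. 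For $k=3$ we have $X\subset\S^5$, and the same elimination leaves the (projected) dS and the dP6; these are separated by a short computation with $\tilde\xi$: when the vanishing coefficient sits on an edge form $g_1$ or $g_2$, the center $\bas{y_1,y_2}$ or $\bas{y_3,y_4}$ is disjoint from $\Ys$, $\rho$ is an isomorphism and $X$ is a degree preserving projection of $\Ys$, i.e.\ a projected dS with $\bT(X)=(2,8,5)$, whereas when it sits on a diagonal form $g_3$ or $g_4$, the center $\bas{y_5,y_6}$ or $\bas{y_7,y_8}$ meets $\Ys$ in two complex conjugate points lying in distinct fibers of both projections of $\P^1\times\P^1$, so $\rho$ contracts these two fibers and $X$ is a dP6 with $\bT(X)=(3,6,5)$, exactly as in \EXM{proj-dP6}; in both $k=3$ cases $\bS(X)=\emptyset$. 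The remaining zero‑patterns (for instance, coefficients vanishing on both edge forms) give a center in special position for which $\rho$ fails to be birational, so they correspond to no celestial surface; equivalently, they would force a celestial type forbidden by \PRP{blowup} under the standing hypothesis on $\bM(X)$, and so are discarded.

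It then remains to compute $\bD(X)$ and to check $\bM(X)=\autc(X)$. An automorphism of $\P^8$ that preserves $\Ys$ together with its real structure and normalizes $G$ must normalize the maximal torus $G$, hence permute the nine weights of the $\P^8$‑representation --- the nine lattice points of the $3\times3$ square --- so it lies in the dihedral group of that square, which acts on the four forms $g_1,\dots,g_4$ (the four pairs of opposite boundary lattice points) only through the transpositions $g_1\leftrightarrow g_2$ and $g_3\leftrightarrow g_4$; thus the group $\Gamma$ acting on $I_2^G(\Ys)$ is isomorphic to $(\Z/2)^2$. By \PRP{moeb}a,c, $\bD(X)$ equals the dimension of the set of admissible forms of the relevant type modulo rescaling and modulo $\Gamma$; counting the free parameters among the nonzero $a_i$ and removing one for the scaling ($\Gamma$ being finite does not change the dimension) yields $\bD(X)=3,\,2,\,2,\,1$ for $k=4$, the edge and diagonal subcases of $k=3$, and $k=2$, matching rows $1$, $2$, $3$, $5$ of \THM{M}. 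Finally, for the dP6 and the ring cyclide \PRP{blowup} gives $\pi_1(\autc(X))=\pi_2(\autc(X))=\PSO(2)$, so $\autc(X)\subseteq\PSO(2)\times\PSO(2)=\bM(X)$ and therefore $\bM(X)=\autc(X)$; the uniqueness clause of \THM{M} is vacuous here since each of these surfaces has $\bD(X)\geq1$.

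The hard part is the identification step for $k\in\{2,3\}$: one has to carry out the explicit intersections of the coordinate subspaces with $\Ys$ via $\tilde\xi$, decide in each case whether the linear projection $\rho$ is birational, and match the surviving configurations with \TAB{P1P1}. A center disjoint from $\Ys$ need not give a birational projection, and certain two‑point configurations would spuriously point to celestial surfaces of an impossible type, so these have to be ruled out --- this is precisely where the particularly convenient basis of \LEM{iqf}a does the work, since it lets one read the center of $\rho$ directly off the vanishing pattern of the coefficients $a_i$.
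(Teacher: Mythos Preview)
Your proof is correct and follows the same overall architecture as the paper --- M\"obius pair, reduction to $q\in I_2^G(\Ys)$ via \PRP{moeb} and \LEM{iqf}a, then a case analysis on the coefficient pattern --- but it diverges from the paper in two places worth noting.

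First, for the identification step the paper works entirely in $y$--coordinates and reads the lattice polygon of the toric image $Z=\rho_\ell(\Ys)$ directly from the zero set of $c$, matching each polygon against \TAB{L} and \TAB{L2}; you instead diagonalise in $x$--coordinates to get the signature $(1,2k)$ at once and then invoke \PRP{blowup} to pin down the name.  Your route is slicker for the signature, while the paper's lattice picture makes the exceptional pattern $a_1=a_2=0$ transparent: there the five surviving lattice points $y_0,y_5,y_6,y_7,y_8$ form the even sublattice of the diamond, so the monomial map is visibly $2{:}1$ and, after the unimodular change $(x,y)\mapsto(x-y,x+y)$, the image is again the ring cyclide of \TAB{L}e.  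Your remark that this pattern ``corresponds to no celestial surface'' is therefore slightly off --- the image \emph{is} a ring cyclide, it is only the map $\rho$ that fails to be birational, so this $q$ never occurs as the M\"obius pair of our given $X$.  This does not affect your argument, since you only need to classify the $q$'s arising from a birational $\rho$.

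Second, and more substantially, your computation of $\bD(X)$ via the normalizer of $G$ replaces the paper's computer-algebra verification.  The paper checks symbolically that no nontrivial $\varphi\in\autc(\P^1)\times\autc(\P^1)$ carries one $Q_c$ to another, whereas you argue that any $\varphi\in\aut(\Ys)$ sending $V(q)$ to $V(q')$ with $q,q'\in I_2^G$ must lie in $N(G)$, and $N(G)/G$ is the dihedral group of the lattice square acting on $\{g_1,\ldots,g_4\}$ through $(\Z/2)^2$.  This is the right idea, but you should make one step explicit: such a $\varphi$ conjugates $\bM(X)$ to $\bM(X')$, and since $G\subseteq\bM(X)$, $G\subseteq\bM(X')$ are inclusions of connected $2$--dimensional groups into groups isomorphic to $\PSO(2)\times\PSO(2)$, we have $\bM(X)=G=\bM(X')$; hence $\varphi\in N(G)$.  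With that sentence added, your normalizer argument is a clean conceptual substitute for the paper's machine check.
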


\begin{proof}
Let $(\Ys,Q_c)$ denote the M\"obius pair of $X$,
where $Q_c$ is a hyperquadric of signature $(1,n+1)$.
The existence of this pair follows from \PRP{blowup} and
we denote the corresponding birational linear projection by $\rho\c\Ys\dto X$.
By \COR{L}, we may assume \Wlog that the real structure of $\Ys$
is defined by $\sigma_2$ in \LEM{invo-P8}.
We know from \PRP{moeb} that we may assume up to M\"obius equivalence
that $Q_c=V(q)$ for some invariant quadratic form $q\in I_2^G(\Ys)$, where $G$ is isomorphic to $\PSO(2)\times \PSO(2)$.
Thus it follows from \LEM{M} and \LEM{iqf}a that
\[
Q_c=\left\{ y\in\P^8 ~\Big\arrowvert~ \sum_{i\in\{1,3,5,7\}}c_i \left(y_0^2-y_iy_{i+1}\right)=0 \right\},
\]
for some coefficient vector $c=(c_1:c_3:c_5:c_7)\in\P^3$. 
The singular locus of $Q_c$ is defined by 
\[
\bS(Q_c)=\bigcap_{i\in I}\set{y\in\P^8}{ y_i=y_{i+1}=0 }
~~\text{with}~~
I:=\set{i\in\{1,3,5,7\}}{c_i\neq 0}.
\]
It follows from \LEM{invo-P8} that $\rho$ factors as
$\mu_t^{-1}\circ\rho_\ell$ and $\rho_r\circ\mu_2^{-1}$ 
as in the following commutative diagram
\[
\begin{tikzcd}
\Ys\arrow[d,"\rho_\ell"']\arrow[r,"\mu_2^{-1}"] & X_2\arrow[d,"\rho_r"] \\
Z\arrow[r,"\mu_t^{-1}"] & X
\end{tikzcd}
\]
Thus $\rho_\ell$ and $\rho_r$ are birational linear projections 
so that the real structures of $Z$ and $X$ are
induced by $\sigma_2\c\Ys\to\Ys$ and $\sigma_0\c X_2\to X_2$, \resp.    
The center of $\rho_\ell$ coincides with the singular locus of $Q_c$
by the definition of M\"obius pair and thus $\rho_\ell$ is a toric projection
(see \RMK{proj}). 
The vector space~$I_2^G(Z)$ is generated by the generators of $I_2^G(\Ys)$ 
that do not contain an element in $\{y_i\}_{i\in I}\cup\{y_{i+1}\}_{i\in I}$ as a variable.
We obtain the lattice type $\bL(Z)$ by 
taking the convex hull of the lattice polygon that is obtained by
removing the lattice points of the polygon in \TAB{L2}b
that are indexed by $\{y_i\}_{i\in I}\cup\{y_{i+1}\}_{i\in I}$ in \TAB{coord}.

We first want to determine the possible values for $\bT(X)$, $\bS(X)$, $\dim \P(I_2^G(X))$
and whether $\bM(X)$ is equal to $\autc(X)$.
We make a case distinction on $I\subset \{1,3,5,7\}$.
Notice that $|I|\leq 2$, otherwise the resulting lattice polygon is 1-dimensional.
\begin{Mlist}
 \item 
If $I=\emptyset$, then $\bT(X)=(2,8,7)$, $\bS(X)=\emptyset$, $\dim \P(I_2^G(X))=3$
and $\bM(X)\subsetneq \autc(X)$ as a direct consequence of the definitions.

\item
If $I\in\{\{1\},\{3\}\}$, then $\bL(Z)$ is as in \TAB{L2}b, $\bT(X)=(2,8,5)$
and $\bM(X)\subsetneq \autc(X)$.
Notice that if $I=\{3\}$, then the surface $Z$ 
is projectively isomorphic to the surface obtained with $I=\{1\}$.
If $I=\{1\}$, then, as discussed before, we omit the generators of $I_2^G(\Ys)$
that contain $y_1$ or $y_2$ as variable and find that
$I_2^G(Z)=\bas{ y_0^2-y_3y_4,~ y_0^2-y_5y_6,~ y_0^2-y_7y_8 }_\C$
so that $\dim \P(I_2^G(X))=2$.
We conclude from the monomial parametrization $\rho_\ell\circ \xi\c\T^2\to Z$
that $\bS(Z)=\emptyset$ and thus $\bS(X)=\emptyset$.

\item
If $I\in\{\{5\},\{7\}\}$, then $\bT(X)=(3,6,5)$ and $\bL(Z)$ is equivalent to \TAB{L}b.
It follows from \PRP{blowup} that $\bM(X)=\autc(X)$ and $\bS(X)=\emptyset$.
As before we verify that $\dim \P(I_2^G(X))=2$.

\item
If $I\in\{\{1,5\},\{1,7\},\{3,5\},\{3,7\},\{5,7\}\}$,
then $\bT(X)=(4,4,3)$ and $\bL(Z)$ is equivalent to \TAB{L}e.
It follows from \PRP{blowup} that $\bM(X)=\autc(X)$ and $\bS(X)=4A_1$.
We verify that $\dim \P(I_2^G(X))=1$ as before.

\item
If $I=\{1,3\}$, then the lattice points corresponding to $y_0$,
$y_5$, $y_6$, $y_7$ and $y_8$ in \TAB{coord},
correspond after the unimodular transformation
$(x,y)\mapsto(x-y,y+x)$
to a 2:1 monomial map $\xi_e(s^2,t^2)$
\st
the lattice type of the monomial parametrization $\xi_e(s,t)$
is as in \TAB{L}e.
Thus $\bL(Z)$ is equivalent to 
\TAB{L}e and we may assume \Wlog that $I=\{5,7\}$ which we already considered.
\end{Mlist}
We verify that $X$ is in all five cases biregular isomorphic to its linear normalization~$X_N$.
We know from \PRP{blowup} that $X_N$ is toric and thus $X$ is toric as well.

It remains to show that $\bD(X)=\dim \P(I_2^G(X))$.
It follows from \PRP{moeb}a that 
$(\Ys,Q_c)$ and $(\Ys,Q_{c'})$
correspond to M\"obius equivalent celestial surfaces 
\Iff 
there exists $\alpha\in \aut(\Ys)$ 
\st $\alpha(Q_c)=Q_{c'}$.
Let $\varphi=(\varphi_1,\varphi_2)$ be an indeterminate element of $\autc(\P^1)\times\autc(\P^1)$.
Thus $\varphi_1$ and $\varphi_2$ are nonsingular $2{\times}2$-matrices in 
eight indeterminates $\vec{a}=(a_1,\ldots,a_8)$.
Recall from \EQN{S} that there exist a value for $\vec{a}$
\st $\alpha$ is defined by the $9{\times}9$-matrix $\cS(\varphi)$.
We compose, for all $i\in I$, the polynomials $y_0^2-y_iy_{i+1}$ with the 
map defined by $\cS(\varphi)$ so that we obtain quadratic polynomials 
in $y_i$ and coefficients in $\Q[a_1,\ldots, a_8]$.
Since $\alpha(Q_c)=Q_{c'}$, we require that coefficients of 
monomials, that are not of the form $y_0^2$ or $y_jy_{j+1}$ for some $j>0$, vanish.
We verify with a computer algebra system that the only possible value for $\vec{a}$ 
\st $\varphi_1$ and $\varphi_2$ have nonzero determinant, is when
$\vec{a}$ defines the identity automorphism.
Therefore
$(\Ys,Q_c)$ and $(\Ys,Q_{c'})$ are equivalent \Iff $c=c'$. 
We conclude that $\bD(X)=\dim \P(I_2^G(X))$ as was left to be shown.
\end{proof}

\begin{proof}[Proof of \THM{M}.]
Suppose that the celestial surface $X\subset\S^n$ is $\lambda$-circled.
If $\lambda<\infty$, then \THM{M} follows from \LEM{M} and \LEM{M1}.
If $\lambda=\infty$, then \THM{M} follows from 
\citep[Section~1]{kol2} and \citep[Section 2.4.3]{olm1};
we will give an alternative proof at \THM{vero}.
\end{proof}

\begin{proof}[Proof of \COR{iqf}.]
Our goal is as in \RMK{goal}, but with signature $(4,5)$ or $(3,6)$ instead of $(1,n+1)$.
Notice that everything in \SEC{pair} works if we replace $\S^n$ with 
a hyperquadric $Q$ of different signature.
It follows from \LEM{invo-P1P1}b that the real structure of $\P^1\times\P^1$
with real points is either $\sigma_+\times\sigma_+$
or $\sigma_s$.
These real structures are compatible with $\sigma_0\c\Ys\to\Ys$ and $\sigma_3\c\Ys\to\Ys$
in \LEM{invo-P8}, \resp.
This corollary is now a direct consequence of \PRP{moeb} and \LEM{iqf}d. 
We remark that if $Q$ has signature $(3,6)$, then
the unique double Segre surface in $Q$ is not covered by real conics.
\end{proof}

\section{The classification of \texorpdfstring{$\P^2$}{P2} revisited}
\label{sec:vero}

If $X\subseteq\S^n$ is $\infty$-circled, then 
$\bT(X)$ is either $(\infty,4,4)$ or $(\infty,2,2)$.
We know from \citep[Section~1]{kol2} and \citep[Section 2.4.3]{olm1}
that $\bM(X)$ is either $\PSO(3)$ or $\PSO(3,1)$.
Moreover, $X\subseteq\S^n$ is in both cases unique up to M\"obius equivalence. 
We believe it might be instructive to give an alternative proof by using the methods of \SEC{pair}. 
We hope that this convinces the reader that our methods 
have the potential to be used outside the scope of this paper.

Suppose that $\Yc\subset\P^5$ is the Veronese surface with lattice type $\bL(\Yc)$
as in \TAB{L}d. 
Indeed, by \COR{L}b, we may assume \Wlog that 
the antiholomorphic involution acting on $\Yc$ is complex conjugation.
We proceed analogously as in \SEC{segre} with the coordinates 
in \TAB{coord} (right side) so that we obtain the following parametric map 
\[
\xi_d\c\T^2\to\Yc\subset\P^5,\quad (s,t)\mapsto(1:st:s:t:s^2:t^2)=(y_0:\ldots:y_5), 
\]
which extends to
$\tilde{\xi_d}\c\P^2\to\Yc\subset\P^5,\quad(s:t:u)\mapsto(u^2:st:su:tu:s^2:t^2)$.
Since $\Yc$ is isomorphic to $\P^2$ via $\tilde{\xi_d}$, we have $\autc(\Yc)\cong \PSL(3)$.
Using~$\xi_d$ we find the following 6 generators for the vector space of quadratic forms
on~$\Yc$ and it follows from \LEM{I2} that these form a basis:
\begin{gather*}
I_2(\Yc) = 
\langle 
y_1y_1 - y_4y_5,~
y_0y_1 - y_2y_3,~
y_2y_2 - y_0y_4,~
y_3y_3 - y_0y_5,~\\
y_1y_2 - y_3y_4,~
y_1y_3 - y_2y_5 
\rangle_\C.
\end{gather*}
Our goal is as in \RMK{goal} with $\Yc$ as $Y$.
Notice that the real structure of $\Yc$ acts as complex conjugation on the Lie algebra $\Msl_3$.
We consider the following elements in $\Msl_3$:
\begin{gather*}
a_1:=
\left[\begin{smallmatrix}
0 &  1 & 0\\
0 &  0 & 0\\  
0 &  0 & 0  
\end{smallmatrix}\right]
,~
a_2:=
\left[\begin{smallmatrix}
0 &  0 & 1\\
0 &  0 & 0\\  
0 &  0 & 0  
\end{smallmatrix}\right]
,~
a_3:=
\left[\begin{smallmatrix}
0 &  0 & 0\\
0 &  0 & 1\\  
0 &  0 & 0  
\end{smallmatrix}\right]
,~
c_1:=
\left[\begin{smallmatrix}
1 &  0 & 0\\
0 & -1 & 0\\  
0 &  0 & 0  
\end{smallmatrix}\right]
,
\\[2mm]
b_1:=
\left[\begin{smallmatrix}
0 &  0 & 0\\
1 &  0 & 0\\  
0 &  0 & 0  
\end{smallmatrix}\right]
,~
b_2:=
\left[\begin{smallmatrix}
0 &  0 & 0\\
0 &  0 & 0\\  
1 &  0 & 0  
\end{smallmatrix}\right]
,~
b_3:=
\left[\begin{smallmatrix}
0 &  0 & 0\\
0 &  0 & 0\\  
0 &  1 & 0  
\end{smallmatrix}\right]
,~
c_2:=
\left[\begin{smallmatrix}
0 &  0 &0\\
0 &  1 &0\\  
0 &  0 &-1  
\end{smallmatrix}\right].
\end{gather*}

\begin{lemma}
\textrm{\bf(invariant quadratic forms for $\P^2$)}
\label{lem:iqf-vero}
\\
Suppose that 
$G\subseteq \autc(\Yc)$ is a Lie subgroup,
where $\Yc\subset\P^5$ is the Veronese surface.
If $\lie(G)=\bas{b_1-a_1,b_2-a_2,b_3-a_3}$, then $G\cong \PSO(3)$ and 
\[
I^G_2(\Yc)
=
\bas{x_1^2 + x_2^2 + x_3^2 - x_0x_4 - x_0x_5 - x_4x_5}_\C.
\]
If $\lie(G)=\bas{a_1,a_2,a_3,b_1,b_2,b_3,c_1,c_2}$, then $G\cong \PSL(3)$ 
and $I^G_2(\Yc)=\bas{1}_\C$.
\end{lemma}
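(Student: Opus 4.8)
The plan is to mimic the proof of \LEM{iqf}: first identify the group $G$ attached to the given Lie algebra, and then compute $I_2^G(\Yc)$ from the linear condition of \THM{D} applied to the $\operatorname{Sym}^2$-representation of $\autc(\Yc)\cong\PSL(3)$ on $\P^5$ induced by $\tilde{\xi_d}$. For the identification of the groups: the matrices displayed just before the lemma show that each $b_i-a_i$ is skew-symmetric, so $\bas{b_1-a_1,b_2-a_2,b_3-a_3}$ is the Lie algebra $\Mso_3$ of skew-symmetric matrices, with the real structure it inherits as complex conjugation on $\Msl_3$. Since $\Yc$ carries complex conjugation as real structure, $\autc(\Yc)\cong\PSL(3,\R)$, and the connected subgroup with Lie algebra $\Mso_3$ is the image of $\SO(3)\hookrightarrow\operatorname{SL}(3)\to\PSL(3)$; this map is injective, because $\SO(3)$ meets the centre $\{\omega\operatorname{Id}\colon\omega^3=1\}$ of $\operatorname{SL}(3)$ only in the identity, so $G\cong\PSO(3)$. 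In the remaining case $\lie(G)=\Msl_3$, whence $G=\autc(\Yc)\cong\PSL(3)$.

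For the $\PSL(3)$-case I would argue representation-theoretically. Via $\tilde{\xi_d}$ the group $\autc(\Yc)$ acts on $\P^5=\P(\operatorname{Sym}^2 V)$ with $V=\C^3$ by $\operatorname{Sym}^2$ of its standard action, so the $21$-dimensional space of quadratic forms on $\P^5$ is $\operatorname{Sym}^2(\operatorname{Sym}^2 V^*)$; restriction of forms along the Veronese embedding $\P(V)\hookrightarrow\P(\operatorname{Sym}^2 V)$ is a $\PSL(3)$-equivariant surjection onto the $15$-dimensional space $\operatorname{Sym}^4 V^*$ of quartics on $\P^2$, with kernel $I_2(\Yc)$, which has dimension $6$ by \LEM{I2}. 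Both $\operatorname{Sym}^4 V^*$ and $I_2(\Yc)$ are irreducible nontrivial $\operatorname{SL}(3)$-modules, so neither contains a nonzero invariant vector; in particular $I_2^{\PSL(3)}(\Yc)=\bas{1}_\C$. (Equivalently one may feed \THM{D} the eight $6\times6$ matrices $D$ obtained by differentiating the $\operatorname{Sym}^2$-images of the one-parameter subgroups generated by $a_1,a_2,a_3,b_1,b_2,b_3,c_1,c_2$ and check that $D^{\top}A+A\,D=0$ for all eight forces $A=0$, as done in \citep[\texttt{moebius-aut}]{maut}.)

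For the $\PSO(3)$-case the same restriction picture identifies $I_2(\Yc)$, now as an $\SO(3)$-module, with $\operatorname{Sym}^2 V\cong\C\oplus W$ where $W$ is the $5$-dimensional irreducible representation, so the subspace of $\SO(3)$-invariants in $I_2(\Yc)$ is $1$-dimensional. It then suffices to exhibit one nonzero invariant. Using the coordinates of \TAB{coord} I would check that
\[
q:=x_1^2+x_2^2+x_3^2-x_0x_4-x_0x_5-x_4x_5=(y_1^2-y_4y_5)+(y_2^2-y_0y_4)+(y_3^2-y_0y_5)
\]
lies in $I_2(\Yc)$, and then verify that $q\circ\varphi=q$ for the three one-parameter subgroups of $\autc(\P^5)$ coming from $b_1-a_1,b_2-a_2,b_3-a_3$ via $\tilde{\xi_d}$ and $\operatorname{Sym}^2$; this is a short symbolic computation of exactly the kind used for the generators of $I_2(\Ys)$ in \SEC{segre}, carried out in \citep[\texttt{moebius-aut}]{maut}. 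Since these subgroups generate $G\cong\PSO(3)$ and the invariant space is $1$-dimensional with $q\neq0$ in it, we get $I_2^G(\Yc)=\bas{q}_\C$. I expect no genuine difficulty here; the only points needing care are the identification of $I_2(\Yc)$ as the irreducible summand complementary to the quartics --- which is what rules out $\PSL(3)$-invariants --- and checking that the connected group with Lie algebra $\Mso_3$ is $\PSO(3)$ itself and not a proper cover.
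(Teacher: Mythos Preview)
Your proposal is correct. The paper's own proof is much terser: it simply notes that $b_i-a_i$ are the infinitesimal generators of the three $3{\times}3$ rotation matrices (so $\lie(G)=\Mso_3$), cites a reference for the generators of $\Msl_3$, and then says ``for the remaining assertions we apply \THM{D} as in the proof of \LEM{iqf}'', i.e.\ solves the linear system $D^\top A+A\,D=0$ over the given generators by direct computation.

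Your route differs in how you compute $I_2^G(\Yc)$. Instead of solving the linear system from \THM{D}, you identify $I_2(\Yc)$ as an $\operatorname{SL}(3)$-module: it is the kernel of the equivariant restriction $\operatorname{Sym}^2(\operatorname{Sym}^2 V^*)\twoheadrightarrow\operatorname{Sym}^4 V^*$, hence the $6$-dimensional irreducible $S_{(2,2)}(V^*)\cong\operatorname{Sym}^2 V$, which is nontrivial---so $I_2^{\PSL(3)}(\Yc)=0$. Restricting to $\SO(3)$ you then use $\operatorname{Sym}^2 V\cong\C\oplus W_5$ to see the invariant space is one-dimensional, and finish by exhibiting the explicit invariant $q$. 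This is a genuinely different, more conceptual argument: it explains \emph{why} the $\PSL(3)$-invariants vanish and why the $\PSO(3)$-invariant is unique up to scalar, without having to solve a $21$-unknown linear system. The paper's approach is quicker to state but opaque; yours is longer but structurally transparent, and in particular would generalise more readily to other Veronese-type embeddings. Both are valid, and you also correctly flag the direct \THM{D} computation as an alternative.
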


\begin{proof}
The subgroup $\PSO(3)\subset \PSL(3)$ is 
generated by the three $3{\times}3$ rotation matrices and 
thus $\Mso_3=\bas{b_1-a_1,b_2-a_2,b_3-a_3}$. 
The generators for the Lie algebra $\Msl_3$ can be found for example in \citep[Section 6.2]{hab1}.
For the remaining assertions we apply \THM{D} as in the proof of \LEM{iqf}. 
\end{proof}

\begin{lemma}
\label{lem:vero-pair}
If $(\Yc,Q)$ and $(\Yc,Q')$ are M\"obius pairs 
of celestial surfaces in~$\S^n$ for $n\geq 3$, 
then these pairs are equivalent. 
\end{lemma}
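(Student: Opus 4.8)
The plan is to argue as in the proof of \LEM{M1}: writing $Q=V(q)$ with $q\in I_2(\Yc)$, I would classify the quadratic forms $q$ of the signature forced by a M\"obius pair up to the action of $\autc(\Yc)$ on $\P^5$, and then check that only one orbit actually occurs. By \COR{L}b we may take the real structure of $\Yc$ to be complex conjugation, so $\autc(\Yc)\cong\PSL(3,\R)$.

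First I would set up a dictionary between $I_2(\Yc)$ and real plane conics. Using $\tilde{\xi_d}$, identify $\P^5$ with the projectivized space of symmetric $3\times3$ matrices $A$, so that $\Yc=\{[A]\c\operatorname{rank}A=1\}$ and $\varphi\in\autc(\Yc)$ acts by $A\mapsto gAg^\top$ for the corresponding $g\in\PSL(3,\R)$. A short calculation with the six generators of $I_2(\Yc)$ shows that these are, up to sign, the entries of the adjugate matrix $\operatorname{adj}(A)$; hence $I_2(\Yc)$ consists exactly of the forms $q_C\c[A]\mapsto\operatorname{tr}(C\cdot\operatorname{adj}(A))$ as $C$ ranges over symmetric $3\times3$ matrices, the assignment $C\mapsto V(q_C)$ is $\PSL(3,\R)$-equivariant, and the determinant of the $6\times6$ matrix of $q_C$ is a nonzero scalar multiple of $(\det C)^2$ (both are relative invariants of $\autc(\Yc)$ of degree $6$ with the same character). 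Consequently the $\autc(\Yc)$-orbits on $\P(I_2(\Yc))$ are the five projective equivalence classes of real conics $C$, and $V(q_C)$ is a smooth quadric precisely when $C$ is nonsingular.

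Next I would record the signature of $V(q_C)$ on one representative of each class. The definite class is represented by the $\PSO(3)$-invariant form $q_0=x_1^2+x_2^2+x_3^2-x_0x_4-x_0x_5-x_4x_5$ of \LEM{iqf-vero}: its matrix is positive definite on $x_1,x_2,x_3$ and has signature $(1,2)$ on $x_0,x_4,x_5$, so $q_0$ has rank $6$ and signature $(1,5)$. The indefinite nonsingular class gives a rank-$6$ form of signature $(3,3)$, while the conjugate-line-pair, real-line-pair and double-line classes give forms of ranks $4,4,3$ and signatures $(1,3),(2,2),(1,2)$. A M\"obius pair $(\Yc,Q)$ of a celestial surface in $\S^n$ with $n\geq3$ forces $V(q)$ to have rank $n+2\geq5$ and exactly one nonzero eigenvalue of the minority sign; among the five cases this holds only for the definite class, and it forces $n=4$ (in agreement with $\rho$ being a projective isomorphism and $X=\Yc$ lying on $\S^4$). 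Since both $Q$ and $Q'$ arise in this way, $q$ and $q'$ correspond to definite conics; these form a single $\PSL(3,\R)$-orbit, so there is $\varphi\in\aut(\P^5)$ with $\varphi(\Yc)=\Yc$ and $\varphi(Q)=Q'$, i.e. $(\Yc,Q)\sim(\Yc,Q')$.

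The only step needing care is the dictionary in the second paragraph --- identifying $I_2(\Yc)$ with $\{\operatorname{tr}(C\cdot\operatorname{adj}(A))\}$, verifying equivariance, and listing the five orbits; the identity $\det q_C=c\,(\det C)^2$ then makes the rank statement immediate, and \LEM{iqf-vero} already supplies the representative $q_0$, so the rest is the evaluation of five small signatures. An alternative that sidesteps the representation theory is to invoke \THM{circle} directly: a celestial surface birational to $\P^2$ must be the Veronese surface in $\S^4$, so $\rho$ is a projective isomorphism and $Q$ is a smooth quadric of signature $(1,5)$ through $\Yc$, and then only the two nonsingular classes need to be distinguished by signature.
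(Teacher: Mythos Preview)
Your proof is correct and follows the same overall strategy as the paper: both identify the $\autc(\Yc)$-orbits on $\P(I_2(\Yc))$ with the five projective equivalence classes of real plane conics, list the resulting signatures $(1,2)$, $(1,3)$, $(1,5)$, $(2,2)$, $(3,3)$, and observe that only the orbit corresponding to definite conics yields signature $(1,5)$. The difference lies in how the identification is established. The paper argues abstractly: it notes that the actions $\cA_\star$ on $\P(I_2(\P^2))$ and $\cB_\star$ on $\P(I_2(\Yc))$ come from six-dimensional irreducible representations of $\PSL(3)$, invokes the fact that such representations are isomorphic to conclude that the orbit structures coincide, and then checks by inspection that each of the five signatures actually occurs in $I_2(\Yc)$. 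You instead construct the equivariant isomorphism explicitly via $C\mapsto q_C(A)=\operatorname{tr}(C\cdot\operatorname{adj}(A))$, which has the advantage of making the rank relation transparent through $\det q_C=c\,(\det C)^2$ and of attaching each conic class to its signature directly rather than by an existence check. Both routes are short; yours is more self-contained and gives slightly more information (which conic class corresponds to which signature), while the paper's is marginally quicker if one is willing to quote the representation-theoretic fact.
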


\begin{proof}
We consider the following group actions
\[
\cA\c\PSL(3)\times \P^2\to\P^2
\quad\text{and}\quad
\cB\c\PSL(3)\times \Yc\to\Yc.
\]
As in \EQN{S}, the group action $\cB$ is defined via 
$\sym_2(\cdot)$ and can be computed via the isomorphism 
$\tilde{\xi_d}\c\P^2\to\Yc\subset\P^5$.
These group actions induce group actions on the spaces of quadratic forms
$V:=\P(I_2(\P^2))$ and $W:=\P(I_2(\Yc))$:
\[
\cA_\star\c\PSL(3)\times V\to V
\quad\text{and}\quad
\cB_\star\c\PSL(3)\times W\to W.
\]
Recall that a quadratic form in $V$ is equivalent via $\cA_\star$ to a diagonal form
of signature $(1,0)$, $(2,0)$, $(1,1)$, $(3,0)$ or $(2,1)$.
It is left to the reader to verify that $W$ contains quadratic forms
of signatures $(1, 2)$, $(1, 3)$, $(1, 5)$, $(2, 2)$ and $(3, 3)$.
We can also define a group action
$\cC_\star\c\PSL(3)\times W\to W$
via the action $\cA_\star$ and an isomorphism $V\to W$.
Irreducible representations $\PSL(3)\to\aut(\P^5)$ are isomorphic
and thus $\cB_\star$ and $\cC_\star$ must be isomorphic group actions.
Hence we can match the orbits of $\cA_\star$ with the orbits of $\cB_\star$
and thus we identified all possible signatures of quadratic forms in $W$.
Thus $Q=V(q)$ and $Q'=V(q')$, where the quadratic forms $q$ and $q'$ in $W$ have both signature $(1,5)$. 
The group action $\cA_\star$, and thus also the group action $\cB_\star$, acts transitively on quadratic forms of the same signature.
It follows that $q'=q\circ \varphi^{-1}$ for some $\varphi\in \autc(\Yc)$.
Therefore $\varphi(Q)=Q'$ so
that $(\Yc,Q)$ and $(\Yc,Q')$ are equivalent as M\"obius pairs.
\end{proof}

The following theorem is a consequence of 
\citep[Theorem~23]{kol2}. We give an alternative proof
by applying the methods in \SEC{pair}.

\begin{theoremext}
{\bf [Koll\'ar, 2018]}
\label{thm:vero}
\\
If $X\subset \S^n$ is an $\infty$-circled celestial surface,
then \THM{M} holds for $X$.
\end{theoremext}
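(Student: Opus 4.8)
The plan is to mirror \LEM{M1}, with the Veronese surface $\Yc\subset\P^5$ in place of the double Segre surface, and to settle the $2$-sphere case directly. By \THM{circle}a an $\infty$-circled celestial surface is either a Veronese surface with $\bT(X)=(\infty,4,4)$, $\bS(X_N)=\emptyset$, or a $2$-sphere with $\bT(X)=(\infty,2,2)$, $\bS(X_N)=\emptyset$. In the $2$-sphere case $X\subseteq\S^2\subset\P^3$ has degree $2$; since $\bS(X_N)=\emptyset$ the linear normalization $X_N$ is a smooth, nondegenerate, irreducible quadric surface, which spans exactly $\P^3$, and since a smooth quadric admits no degree-preserving linear projection to a smaller space we get $X=X_N$. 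As $X\subseteq\S^2$ with both irreducible of dimension $2$, in fact $X=\S^2$, so $\bS(X)=\emptyset$, $\bM(X)=\autc(X)=\autc(\S^2)=\PSO(3,1)$, and $\bD(X)=0$ trivially; this is the last row of \THM{M}.

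For the Veronese case, the complete linear system of hyperplane sections of a Veronese surface in $\P^5$ is all of $\P^5$, so $X=X_N$; moreover a Veronese surface spans $\P^5$ while a celestial surface lies in no hyperplane section of $\S^n$, which forces $n=4$, so $\bT(X)=(\infty,4,4)$ and $\bS(X)=\emptyset$. By \COR{L}b (via \LEM{invo-P1P1}c) I may assume the real structure on $\Yc$ is complex conjugation, so $\autc(\Yc)\cong\PSL(3)$ acting through $\sym_2$. Because $m=n+1=5$, the birational linear projection of \PRP{moeb} is here a projective isomorphism $\rho\c\Yc\dto X\subset\S^4\subset\P^5$, and $X$ determines a M\"obius pair $(\Yc,Q)$ with $Q$ a smooth quadric $\aut(\P^5)$-equivalent to $\S^4$ and containing $\Yc$. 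By \LEM{vero-pair} all such pairs are equivalent, so $X$ is unique up to M\"obius equivalence and $\bD(X)=0$.

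It remains to compute $\bM(X)$. By \LEM{iqf-vero} we have $I_2^{\PSO(3)}(\Yc)=\bas{q_0}$ with $q_0=x_1^2+x_2^2+x_3^2-x_0x_4-x_0x_5-x_4x_5$, and a short eigenvalue computation shows $V(q_0)$ is $\aut(\P^5)$-equivalent to $\S^4$; hence by \PRP{moeb}b the pair $(\Yc,V(q_0))$ is the M\"obius pair of a celestial surface into whose M\"obius automorphism group $\PSO(3)$ embeds, and by \LEM{vero-pair} I may take $Q=V(q_0)$. Then \PRP{moeb}a gives $\bM(X)\cong\{\varphi\in\PSL(3)~|~\varphi(Q)=Q\}\supseteq\PSO(3)$, and by \THM{D} the Lie algebra $\lie(\bM(X))=\{D\in\Msl_3~|~q_0\in I_2^{\bas{D}}(\Yc)\}$ is a subalgebra of $\Msl_3$ containing $\Mso_3$. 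Writing $\Msl_3=\Mso_3\oplus\mathfrak{p}$, where $\mathfrak{p}$ is the $5$-dimensional irreducible $\Mso_3$-module of symmetric traceless matrices, shows $\Mso_3$ is maximal in $\Msl_3$, so $\lie(\bM(X))$ is either $\Mso_3$ or $\Msl_3$; the latter would exhibit a nonzero element of $I_2^{\PSL(3)}(\Yc)=0$ (\LEM{iqf-vero}), which is impossible. Hence $\bM(X)\cong\PSO(3)$, as in the $(\infty,4,4)$ row of \THM{M}. The main obstacle is exactly this last step — upgrading the containment $\PSO(3)\subseteq\bM(X)$ to equality — and the two facts that make it work are the maximality of $\Mso_3$ in $\Msl_3$ and the vanishing $I_2^{\PSL(3)}(\Yc)=0$; one also checks routinely that $\bM(X)$ has no extra connected components and that $q_0$ is real, hence compatible with the real structure.
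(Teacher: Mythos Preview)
Your proof is correct and follows essentially the same route as the paper's: both handle the $2$-sphere case directly, then for the Veronese case use \LEM{vero-pair} to obtain uniqueness and $\bD(X)=0$, use \LEM{iqf-vero} to exhibit $q_0$ and hence $\PSO(3)\subseteq\bM(X)$, and finish with the maximality of $\PSO(3)$ in $\PSL(3)$ together with $I_2^{\PSL(3)}(\Yc)=0$. Your argument is in fact a bit more explicit than the paper's in two places --- you justify maximality via the irreducible decomposition $\Msl_3=\Mso_3\oplus\mathfrak{p}$ rather than simply asserting it, and you verify the signature of $q_0$ directly --- but these are expository refinements, not a different strategy.
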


\begin{proof}
If $n\leq 2$, then $\bT(X)=(\infty,2,2)$, $\bS(X)=\emptyset$, 
$\bM(X)\cong\PSO(3,1)$ and $\bD(X)=0$ so that \THM{M} holds.
If $n>2$, then we know from \THM{circle}a that $\bT(X)=(\infty,4,4)$, $\bS(X)=\emptyset$
and $X$ is projectively equivalent to the Veronese surface $\Yc$.
By assumption there exists a subgroup $G\subseteq \bM(X)$ \st $\dim G\geq 2$.
We know from \PRP{moeb}b that $X$ has M\"obius pair $(\Yc,V(q))$ for 
some invariant quadratic form $q\in I_2^G(\Yc)$ of signature $(1,5)$.
It follows from \LEM{iqf-vero} and \PRP{moeb}c that $G\ncong\PSL(3)$
and that if $G\cong\PSO(3)$, then $q\circ\varphi=x_1^2 + x_2^2 + x_3^2 - x_0x_4 - x_0x_5 - x_4x_5$
for some $\varphi \in \aut(\Yc)$.
It follows from \LEM{vero-pair} and \PRP{moeb}a that
$X$ is unique up to M\"obius equivalence. 
Therefore $\PSO(3)\subseteq \bM(X)$ and $\bD(X)=0$.
There exists no subgroup $G$ \st $\PSO(3)\subsetneq G \subsetneq \PSL(3)$
and thus we conclude that $\bM(X)\cong \PSO(3)$.
\end{proof}

\section{Acknowledgements}

I thank 
M.\ Gallet, W.\ de Graaf, J.\ Koll\'ar, H.\ Pottmann, J.\ Schicho and M.~Skopenkov 
for interesting comments and discussions.
The constructive methods are implemented using \citep[Sage]{sage}.
The images in \FIG{dP6} are rendered with \citep[Povray]{povray}.
Our implementations are open source and can be found at \citep[\texttt{moebius\_aut}]{maut}.
Financial support was provided by the Austrian Science Fund (FWF): P33003.

\section{References}
\bibliography{moebius-aut}

\begin{thebibliography}{16}
\providecommand{\natexlab}[1]{#1}
\providecommand{\url}[1]{\texttt{#1}}
\expandafter\ifx\csname urlstyle\endcsname\relax
  \providecommand{\doi}[1]{doi: #1}\else
  \providecommand{\doi}{doi: \begingroup \urlstyle{rm}\Url}\fi

\bibitem[Berndt et~al.(2016)Berndt, Console, and Olmos]{olm1}
J.~Berndt, S.~Console, and C.~E. Olmos.
\newblock \emph{Submanifolds and holonomy}.
\newblock CRC Press, second edition, 2016.
\newblock ISBN 978-1-4822-4515-8.

\bibitem[Cox et~al.(2011)Cox, Little, and Schenck]{toric}
D.A. Cox, J.B. Little, and H.K. Schenck.
\newblock \emph{Toric varieties}, volume 124.
\newblock American Mathematical Society, 2011.
\newblock ISBN 978-0-8218-4819-7.

\bibitem[de~Graaf et~al.(2009)de~Graaf, P{\'{\i}}lnikov{\'a}, and
  Schicho]{sch10}
W.~A. de~Graaf, J.~P{\'{\i}}lnikov{\'a}, and J.~Schicho.
\newblock Parametrizing del {P}ezzo surfaces of degree 8 using {L}ie algebras.
\newblock \emph{J. Symbolic Comput.}, 44\penalty0 (1):\penalty0 1--14, 2009.

\bibitem[Douglas and Repka(2016)]{so4}
A.~Douglas and J.~Repka.
\newblock The subalgebras of {$\mathfrak{so}(4,\mathbb{C})$}.
\newblock \emph{Comm. Algebra}, 44\penalty0 (12):\penalty0 5269--5286, 2016.

\bibitem[Haase and Schicho(2013)]{sch5}
C.~Haase and J.~Schicho.
\newblock An inequality for adjoint rational surfaces.
\newblock 2013.
\newblock \href{https://arxiv.org/abs/1306.3389}{arXiv:1306.3389}.

\bibitem[Hall(2015)]{hab1}
B.~Hall.
\newblock \emph{Lie groups, {L}ie algebras, and representations}, volume 222.
\newblock Springer, 2015.
\newblock ISBN 978-3-319-13466-6.

\bibitem[Koll\'{a}r(2018)]{kol2}
J.~Koll\'{a}r.
\newblock Quadratic solutions of quadratic forms.
\newblock In \emph{Local and global methods in algebraic geometry}, volume 712
  of \emph{Contemp. Math.}, pages 211--249. Amer. Math. Soc., Providence, RI,
  2018.

\bibitem[Lubbes(2019)]{maut}
N.~Lubbes.
\newblock {Software library for computing M\"obius automorphisms of surfaces},
  2019.
\newblock
  \href{http://github.com/niels-lubbes/moebius_aut}{github.com/niels-lubbes/moebius\_aut}.

\bibitem[Lubbes(2020)]{nls-fam-circles}
N.~Lubbes.
\newblock Surfaces that are covered by two families of circles.
\newblock 2020.
\newblock \href{https://arxiv.org/abs/1302.6710}{arXiv:1302.6710}.

\bibitem[Lubbes and Schicho(2011)]{nls1}
N.~Lubbes and J.~Schicho.
\newblock Lattice polygons and families of curves on rational surfaces.
\newblock \emph{J. Algebraic Combin.}, 34\penalty0 (2):\penalty0 213--236,
  2011.

\bibitem[Per(2004)]{povray}
\emph{Povray}.
\newblock Persistence of Vision Pty. Ltd., 2004.
\newblock \href{http://www.povray.org/download/}{www.povray.org}.

\bibitem[Pottmann et~al.(2012)Pottmann, Shi, and Skopenkov]{pot2}
H.~Pottmann, L.~Shi, and M.~Skopenkov.
\newblock Darboux cyclides and webs from circles.
\newblock \emph{Comput. Aided Geom. Design}, 29\penalty0 (1):\penalty0 77--97,
  2012.

\bibitem[Serre(1992)]{serre}
J-P. Serre.
\newblock \emph{Topics in {G}alois theory}.
\newblock Jones and Bartlett Publishers, 1992.
\newblock ISBN 0-86720-210-6.

\bibitem[Silhol(1989)]{sil1}
R.~Silhol.
\newblock \emph{Real algebraic surfaces}.
\newblock Springer, 1989.
\newblock ISBN 3-540-51563-1.

\bibitem[Skopenkov and Krasauskas(2018)]{sko3}
M.~Skopenkov and R.~Krasauskas.
\newblock Surfaces containing two circles through each point.
\newblock \emph{Math. Ann.}, 2018.
\newblock \doi{10.1007/s00208-018-1739-z}.

\bibitem[{The Sage Developers}(2018)]{sage}
{The Sage Developers}.
\newblock \emph{{S}ageMath, the {S}age {M}athematics {S}oftware {S}ystem},
  2018.
\newblock \href{http://www.sagemath.org}{www.sagemath.org}.

\end{thebibliography}

\paragraph{address of author:}~
Johann Radon Institute for Computational and Applied Mathematics (RICAM), Austrian Academy of Sciences
\\
\textbf{email:} niels.lubbes@gmail.com

\end{document}